\numberwithin{equation}{section}
\DeclareMathOperator{\cof}{cof}
\DeclareMathOperator{\id}{Id}
\newcommand{\rhomax}{\rho_\mathrm{max}}
\title{The back-and-forth method for\\ Wasserstein gradient flows}
\date{\today}
\author{Matt Jacobs}
\address{UCLA, Los Angeles, CA, USA}
\email{majaco@math.ucla.edu}
\author{Wonjun Lee}
\address{UCLA, Los Angeles, CA, USA}
\email{wlee@math.ucla.edu}
\author{Flavien Léger}
\address{ENS, PSL University, Paris, France}
\email{flavienleger@nyu.edu}
\begin{document}

\begin{abstract}
We present a method to efficiently compute Wasserstein gradient flows.  Our approach is based on a generalization of the back-and-forth method (BFM) introduced in~\cite{jacobslegerbf} to solve optimal transport problems.  We evolve the gradient flow by solving the dual problem to the JKO scheme. In general, the dual problem is much better behaved than the primal problem.  This allows us to efficiently run large scale gradient flows simulations for a large class of internal energies including singular and non-convex energies.  
\end{abstract}

\maketitle



\section{Introduction}

 In this work, we are interested in simulating the evolution of parabolic equations of the form
\begin{equation}\label{eq:pde}
    \begin{split}
        \partial_t \rho - \nabla \cdot ( \rho \nabla \phi) = 0,\\
        \phi = \delta U(\rho).
    \end{split}
\end{equation}
Equation (\ref{eq:pde}), often referred to as Darcy's law or the generalized porous medium equation, describes the evolution of a mass density $\rho$ flowing along a pressure gradient $\nabla \phi$ generated by an internal energy functional $U$.  This class of equations models various physical phenomena  such as fluid flow, heat transfer, aggregation-diffusion, and crowd motion~\cite{vazquez2007porous, otam}. In general, these equations are both stiff and non-linear making them challenging to solve numerically.  For example, in the important special case where $U(\rho) = \frac{1}{m-1} \int \rho^m$ ($m>1$),  equation~\eqref{eq:pde} becomes a non-linear version of the heat equation
 \[
 \partial_t \rho - \Delta (\rho^m) = 0,
\]
known as the porous medium equation (PME). When $U$ is non-differentiable or non-convex, simulation of these equations becomes even more difficult. Thus, in this paper, our goal is to design a method that can efficiently and accurately simulate equation~\eqref{eq:pde} for a wide variety of internal energies $U$.

Our approach to simulating Darcy's law is based on the celebrated interpretation of equation~\eqref{eq:pde} as a gradient flow with respect to the Wasserstein metric~\cite{JKO1998variational,otto2001pme}. This interpretation can be used to create a discrete-in-time approximation scheme known as the JKO scheme~\cite{JKO1998variational}. The scheme constructs approximate solutions by iterating
\begin{equation}\label{eq:variational-scheme}
    \rho^{(n+1)} := \argmin_{\rho} U(\rho) + \frac{1}{2\tau} W^2_2(\rho, \rho^{(n)}).
\end{equation}
Here, $\tau$ plays the role of the time step in the scheme and $W_2(\cdot,\cdot)$ is the $2$-Wasserstein metric from the theory of optimal transportation~\cite{otam} (see Section~\ref{section:background-optimal-transport} for a brief overview of optimal transport and the 2-Wasserstein metric). Thanks to the variational structure of the scheme, the iterates are unconditionally energy stable and one can choose the time step $\tau$ independently from any spatial discretization.  In addition, the JKO scheme retains many desirable properties of the continuum equation, such as comparison and contraction type principles~\cite{jacobskimtongL1, santambrogio_bv, aky2014}.  

In light of the many favorable properties of the JKO scheme, there have been many works devoted to the computation of minimizers for problem (\ref{eq:variational-scheme}),  see~\cite{MR2580954, MR2566595, MR3413589,MR3555350,  MR3565819,MR3635459,  carrillo2019primal, carrillo2020variational,  leclerc2020lagrangian}
to name just a few.  Despite the amount of work on this problem, it remains a challenge to efficiently solve the JKO scheme at a high resolution.   The main difficulty in solving problem (\ref{eq:variational-scheme}) lies in the handling of the Wasserstein distance term.  Indeed, there is not a simple formula that gives the variation of the Wasserstein distance with respect to the density $\rho$.  As such, essentially all methods for solving (\ref{eq:variational-scheme}) are adaptations of algorithms for computing the Wasserstein distance between two fixed densities.  

In this paper, we solve problem (\ref{eq:variational-scheme}) by adapting the back-and-forth method (BFM) introduced in~\cite{jacobslegerbf}.  BFM is a state-of-the-art algorithm for computing optimal transport maps between two fixed densities. 
Instead of directly solving Monge's optimal transportation problem, BFM finds optimal maps by solving the associated Kantorovich dual problem. Building on this approach, rather than directly solving problem (\ref{eq:variational-scheme}), we instead compute solutions to its dual problem.   The dual problem is a concave maximization problem that produces the pressure variable at the next time step $\phi^{(n+1)}$.  The optimal density variable can then easily be recovered from the pressure via the duality relation $\phi^{(n+1)}= \delta U(\rho^{(n+1)})$. 

There are several advantages to solving the dual problem rather than the original primal problem.  The pressure variable $\phi$ has better regularity than the density variable $\rho$.  Indeed, at worst, the pressure gradient must be square integrable.  As a result, the pressure is better suited to discrete approximation schemes.  In addition, there is an explicit formula to compute derivatives of the dual functional, hence one can apply gradient ascent to solve the dual problem (the corresponding gradient descent scheme for the primal problem is much more difficult).  
Finally, the dual approach is very convenient when $U$ encodes hard constraints (such as incompressibility of the density), as the dual problem will be unconstrained.

Leveraging the advantages of the dual problem to (\ref{eq:variational-scheme}) and the special gradient ascent structure of BFM, we are able to rapidly and accurately solve the JKO scheme for a large class of internal energies $U$. 
We show that the algorithm increases the value of the dual problem at every step.   In particular, this analysis holds even in cases where the Hessian of $U$ is singular and our analysis has no dependence on the size of the computational grid.  As a result, we are able to simulate equation (\ref{eq:pde}) on a much larger scale than previous methods, and we are easily able to handle difficult cases like incompressible crowd motion models with obstacles and aggregation-diffusion equations.

\subsection{Overall approach}\label{section:intro-overall-approach}
The back-and-forth method for Wasserstein gradient flows is based on solving the dual problem associated to the JKO scheme. 
The starting point for this analysis is Kantorovich's dual formulation of optimal transport.  Given two measures $\mu$ and $\nu$, the dual formulation of the 2-Wasserstein distance is given by
\begin{equation}\label{eq:kantorovich-dual0}
\frac{1}{2\tau}W_2^2(\mu,\nu)=\sup_{(\phi,\psi)\in\mathcal{C}} \int_{\Omega} \psi(x)\,d\mu(x)-\int_{\Omega}\phi(y)\,d\nu(y),
\end{equation}
where we maximize over the constraint
\begin{equation*}
    \mathcal{C}:=\{(\phi,\psi)\in C(\Omega)\times C(\Omega): \psi(x)-\phi(y)\leq \frac{1}{2\tau}|x-y|^2\}.
\end{equation*}

Using the dual formulation of optimal transport, we can rewrite problem (\ref{eq:variational-scheme}) as
\[
\inf_{\rho}\sup_{(\phi, \psi)\in \mathcal{C}} U(\rho) + \int_{\Omega} \psi(x)\,d\rho^{(n)}(x)-\int_{\Omega}\phi(y)\,d\rho(y).
\]
When $U$ is convex, we can interchange the inf and sup to get an equivalent dual problem to (\ref{eq:variational-scheme}):
\begin{equation}\label{eq:got_dual}
    \sup_{(\phi, \psi)\in \mathcal{C}}  \int_{\Omega}\psi(x)\,d\rho^{(n)}(x)-U^*(\phi),
\end{equation}
where $U^*$ is the convex conjugate of $U,$
\[
U^*(\phi):=\sup_{\rho} \int_{\Omega} \phi(y)\,d\rho(y)-U(\rho).
\]

Problem (\ref{eq:got_dual}) looks difficult due to the constraint encoded by $\mathcal{C}$.  Nevertheless, there is a very convenient way to reformulate the problem.  Because $\rho^{(n)}$ is a nonnegative measure, it is favorable to choose $\psi$ to be pointwise as large as possible.  If we fix $\phi$, it then follows that the corresponding largest possible choice for $\psi$ is given by
\begin{equation}\label{eq:c_transform}
\phi^c(x):=\inf_{y\in\Omega} \phi(y)+\frac{1}{2\tau}|x-y|^2.
\end{equation}
Conversely, $U^*$ is increasing with respect to $\phi$ (see Section~\ref{section:background-optimal-transport}), therefore, we would like to choose $\phi$ to be pointwise as small as possible.  Thus, if we fix $\psi$, then the corresponding smallest choice for $\phi$ is given by
\begin{equation}\label{eq:conj_c_transform}
    \psi^{\bar{c}}(y):=\sup_{x\in\Omega} \psi(x)-\frac{1}{2\tau}|x-y|^2.
\end{equation}

Formulas (\ref{eq:c_transform}) and (\ref{eq:conj_c_transform}) are known as the backward-$c$-transform and forward-$c$-transform respectively. These transforms play an essential role in optimal transport and are integral to our method.  Crucially, we can use these transforms to eliminate the constraint $\mathcal{C}$ and either one of the variables $\phi$ or $\psi$.  More explicitly, problem (\ref{eq:got_dual}) is equivalent to maximizing either one of the following two \emph{unconstrained} functionals:
\begin{equation}\label{eq:J_0}
J(\phi):=\int_{\Omega} \phi^c(x)\, d\rho^{(n)}(x)-U^*(\phi),
\end{equation}
\begin{equation}\label{eq:I_0}
I(\psi):=\int_{\Omega} \psi(x)\, d\rho^{(n)}(x)-U^*(\psi^{\bar{c}}).
\end{equation}
Indeed, if $\phi_*$ is a maximizer of $J$ and $\psi_*$ is a maximizer of $I$, then we must have the relations
\[
\phi_*^c=\psi_*, \quad \psi_*^{\bar{c}}=\phi_*,
\]
and $(\phi_*, \psi_*)$ is a maximizer of (\ref{eq:got_dual}).  The reformulations $I$ and $J$ genuinely simplify the task of finding maximizers.  On a regular discrete grid, the $c$-transform can be computed very efficiently~\cite{DBLP:journals/na/Lucet97, jacobslegerbf}.  As a result, it is much more tractable to maximize $I$ and $J$, rather than trying work with (\ref{eq:got_dual}) directly.

We will find the maximizers $\phi_*$ and $\psi_*$ by building upon the BFM algorithm introduced in~\cite{jacobslegerbf}.  
 The original BFM gives a very efficient scheme for finding the maximizers in the special case where $U^*$ is a linear functional. Rather than focusing on either $I$ or $J$, BFM simultaneously maximizes both functionals.    The method proceeds by hopping back-and-forth between gradient ascent updates on $J$ in $\phi$-space and gradient ascent updates on $I$ in $\psi$-space (hence the name).  
 In between gradient steps, information in one space ($\phi$-space or $\psi$-space)  is propagated back to the other by taking a forward/backward $c$-transform.
 As noted in~\cite{jacobslegerbf}, the advantage of the back-and-forth approach is that certain features of the optimal solution pair $(\phi_*, \psi_*)$ may be easier to build in one space compared to the other.  As a result, the back-and-forth method converges far more rapidly than vanilla gradient ascent methods that operate only on $\phi$-space or only on $\psi$-space.  
 
In order to generalize BFM to the Wasserstein gradient flow case, we need to be able to guarantee the stability of gradient ascent steps on (\ref{eq:J_0}) and (\ref{eq:I_0}) when $U^*$ is nonlinear.  In fact, for many important cases, the Hessian of $U^*$ may have a singular component.  To overcome this difficulty, we perform the gradient ascent steps in an appropriately weighted Sobolev space.  The Sobolev control allows us to use Stokes' Theorem to  convert boundary integrals into integrals over the full space, thus taming the singularities of $U^*$ (see Section~\ref{ssec:h1}). As a result of this continuous analysis, the discretized scheme will have a convergence rate that is independent of the grid size.  The back-and-forth method is summarized in Algorithm~\ref{alg:otn_0}, where $H$ is the aforementioned weighted Sobolev space.

\begin{algorithm}[h] 
\SetAlgoLined
    Given $\rho^{(n)}$ and $\phi_0$, iterate: 
        \begin{equation*}
            \begin{split}
                \phi_{k+\frac{1}{2}} &= \phi_{k} +  \nabla_{\!H} J(\phi_{k})\\
                \psi_{k+\frac{1}{2}} &= (\phi_{k+\frac{1}{2}})^{c}\\
                \psi_{k+1} &= \psi_{k+\frac{1}{2}} +  \nabla_{\!H}I(\psi_{k+\frac{1}{2}})\\
                \phi_{k+1} &= (\psi_{k+1})^{\bar{c}}
            \end{split}
        \end{equation*}
 \caption{The back-and-forth scheme for solving~\eqref{eq:got_dual}}
 \label{alg:otn_0}
\end{algorithm}

Once we have solved the dual problem, we can recover the solution to the original problem (\ref{eq:variational-scheme}).  If $U$ is convex, then the optimal dual variable $\phi_*$ is related to $\rho^{(n+1)}$ through the duality relation $\rho^{(n+1)}=\delta U^*(\phi_*)$ (see Theorem~\ref{thm:dual-gap-zero} in Section~\ref{ssec:duality}). When $U$ is not convex, the connection between (\ref{eq:variational-scheme}) and the dual problem becomes more tenuous.  Luckily, we can circumvent this difficulty using a convexity splitting scheme~\cite{eyre}.  Indeed, if we write $U=U_1+U_0$ where $U_1$ is convex and $U_0$ is concave, then we can replace the JKO scheme (\ref{eq:variational-scheme}) with the modified scheme
\begin{equation}\label{eq:modified_scheme}
    \rho^{(n+1)}=\argmin_{\rho} U_1(\rho)+U_0(\rho^{(n)})+(\delta U_0(\rho^{(n)}),\rho-\rho^{(n)})+\frac{1}{2\tau}W_2^2(\rho,\rho^{(n)}).
\end{equation}
It is well-known that convexity splitting retains the energy stability of a fully implicit scheme. Crucially, the energy term $U_1(\rho)+U_0(\rho^{(n)})+(\delta U_0(\rho^{(n)}),\rho-\rho^{(n)})$ in (\ref{eq:modified_scheme}) is a convex function of the variable $\rho$, and thus, we can apply the duality approach.  All together, our method gives an extremely rapid way to simulate the PDE (\ref{eq:pde}) even when $U$ is non-convex or irregular.

The remainder of the paper is organized as follows. In Section~\ref{section:background}, we review important background information on optimal transport, convex analysis, and optimization. In Section~\ref{section:back-and-forth-method}, we present the back-and-forth algorithm and explain how to guarantee stability and choose step sizes. Lastly, in Section~\ref{section:experiments}, we demonstrate the accuracy, speed, and versatility of the algorithm through a wide suite of numerical experiments. In particular, our experiments include many cases that are well-known to be numerically challenging. 

\section{Background}\label{section:background}

In this section, we will rigorously establish the connection between the primal and dual formulations of the JKO scheme.  Furthermore, we will review key concepts from optimal transport and convex analysis that are needed to compute the gradients $\nabla_{\!H} J, \nabla_{\!H} I$ and establish stability of Algorithm~\ref{alg:otn_0}.  Note that throughout the paper we shall assume that $\Omega\subset \R^d$ is a bounded open set. 

\subsection{The \texorpdfstring{$c$}{c}-transform and optimal transport}\label{section:background-optimal-transport}
Throughout this section the space of continuous functions over $\Om$ will be denoted by $C(\Om)$.

\begin{definition} Given $\phi\in C(\Om)$ its {\it backward $c$-transform} is 
\begin{equation*}
\phi^{c}(x):=\inf_{y\in\Omega} \phi(y)+\frac{1}{2\tau}|x-y|^2.
\end{equation*}
Given $\psi\in C(\Om)$  its {\it forward $c$-transform} is 
\begin{equation*}
\psi^{\bar{c}}(y):=\sup_{x\in \Omega} \psi(x)-\frac{1}{2\tau}|x-y|^2.
\end{equation*}
\end{definition}

\begin{lemma}[\cite{otam}]\label{lem:ccc}
Given $\phi, \psi\in C(\Om)$, we have
\[
\phi^{c\bar{c}}\leq \phi, \quad \psi\leq \psi^{\bar{c}c},
\]
and
\[
\phi^{c\bar{c}c}=\phi^{c},\quad \psi^{\bar{c}c\bar{c}}=\psi^{\bar{c}}.
\]
\end{lemma}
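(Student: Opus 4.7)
The plan is to prove the two inequalities first and then obtain the two identities as formal consequences, using only the definitions of the $c$-transforms together with their order-preserving character.

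For the first inequality $\phi^{c\bar c}\le \phi$, I would start from the trivial bound $\phi^c(x)\le \phi(y)+\tfrac{1}{2\tau}|x-y|^2$, valid for every $x,y\in\Omega$ directly from the definition of the infimum. Rearranging gives $\phi^c(x)-\tfrac{1}{2\tau}|x-y|^2\le \phi(y)$; taking the supremum in $x$ on the left-hand side then yields $\phi^{c\bar c}(y)\le \phi(y)$. The second inequality $\psi\le \psi^{\bar c c}$ is the symmetric statement and follows the same way: from $\psi^{\bar c}(y)\ge \psi(x)-\tfrac{1}{2\tau}|x-y|^2$ one rearranges to $\psi^{\bar c}(y)+\tfrac{1}{2\tau}|x-y|^2\ge \psi(x)$ and takes the infimum in $y$.

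For the two identities, I would first record that both $\phi\mapsto \phi^c$ and $\psi\mapsto \psi^{\bar c}$ are monotone in the pointwise order; this is immediate since inf/sup of an affine-in-$\phi$ (resp.\ affine-in-$\psi$) expression preserves order. Applying $c$ to the inequality $\phi^{c\bar c}\le \phi$ then gives $\phi^{c\bar c c}\le \phi^c$. For the reverse inequality, I would invoke the second inequality of the lemma with $\phi^c$ playing the role of $\psi$, obtaining $\phi^c\le (\phi^c)^{\bar c c}=\phi^{c\bar c c}$. Combining, $\phi^{c\bar c c}=\phi^c$. The dual identity $\psi^{\bar c c\bar c}=\psi^{\bar c}$ is handled identically: apply $\bar c$ (monotone) to $\psi\le \psi^{\bar c c}$ to get $\psi^{\bar c}\le \psi^{\bar c c\bar c}$, and apply the first inequality of the lemma with $\phi$ replaced by $\psi^{\bar c}$ for the reverse direction.

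I do not expect any serious obstacle; the statement is a standard Galois-connection type fact for the pair $(c,\bar c)$, and all manipulations are elementary quantifier exchanges. The only point that merits mild care is bookkeeping of which variable is free and which is being optimised over when iterating the transforms, and checking the direction of monotonicity — both trivial but easy to misstate.
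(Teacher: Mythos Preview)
Your argument is correct and is the standard proof of this classical fact. Note that the paper does not actually supply its own proof of this lemma; it is stated with a citation to \cite{otam} and used as a black box, so there is no in-paper argument to compare against. Your derivation---the two inequalities from the definitions, then the identities via monotonicity of the transforms combined with those inequalities---is exactly the proof one finds in the cited reference.
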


\begin{definition}
Given $\phi, \psi\in C(\Omega)$, we say that $\phi$ is {\it $c$-convex} if $\phi^{c\bar{c}}=\phi$ and we say that $\psi$ is {\it $c$-concave} if $\psi^{\bar{c}c}=\psi$.  Furthermore, if $\phi^c=\psi$ and $\psi^{\bar{c}}=\phi$, then we say the pair $(\phi,\psi)$ is {\it$c$-conjugate}.
\end{definition}

The following two propositions establish the fundamental relationship between optimal transport and the $c$-transform.
\begin{prop}[\cite{gangbo1994elementary, gangbo1995quelques, gangbo_mccann}] \label{prop:c_transform_variation}
If $\phi\colon\Omega\to\R$ is $c$-convex and $\psi\colon\Omega\to\R$ is $c$-concave, then the maps
\begin{equation}\label{eq:T_map}
    T_{\phi}(x):=\argmin_{y\in\Omega} \phi(y)+\frac{1}{2\tau}|x-y|^2
\end{equation}
and
\begin{equation}\label{eq:S_map}
    S_{\psi}(y):=\argmax_{x\in\Omega} \psi(x)-\frac{1}{2\tau}|x-y|^2
\end{equation}
are well-defined and unique almost everywhere.
Furthermore, if $u\in C(\Omega)$, then for almost every $x,y\in\Omega$ we have the following perturbation formulas for the $c$-transform
\begin{equation}\label{eq:c_variation}
\lim_{t\to 0^+}  \frac{(\phi+tu)^c(x)-\phi^c(x)}{t}=u(T_{\phi}(x)),
\end{equation}
\begin{equation}\label{eq:c_bar_variation}
\lim_{t\to 0^+}  \frac{(\psi+tu)^{\bar{c}}(y)-\psi^{\bar{c}}(y)}{t}=u(S_{\psi}(y)).
\end{equation}
Finally, if $\phi$ and $\psi$ are $c$-conjugate, then 
\[
S_{\psi}(y)=y+\tau\nabla \phi(y),
\]
\[
T_{\phi}(x)=x-\tau\nabla \psi(x),
\]
and
$T_{\phi}\big( S_{\psi}(y)\big)=y$, $S_{\psi}\big( T_{\phi}(x)\big)=x$ almost everywhere.
\end{prop}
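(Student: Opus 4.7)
The plan is to derive every claim from the single observation that $c$-transforms regularize: expanding the square in~\eqref{eq:c_transform} shows that $\phi^c(x)-\frac{|x|^2}{2\tau}$ is an infimum of affine functions of $x$, hence concave; symmetrically $\psi^{\bar{c}}(y)+\frac{|y|^2}{2\tau}$ is convex. Consequently every $c$-convex function is semi-convex and every $c$-concave function is semi-concave (both with constant $1/\tau$), so both are locally Lipschitz and differentiable almost everywhere on $\Omega$ by Rademacher's theorem (or the finer Alexandrov theorem). For existence in~\eqref{eq:T_map}, continuity of $\phi$ on the compact closure $\overline{\Omega}$ ensures the infimum is attained. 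For a.e.\ uniqueness, I would observe that at any $x$ where $\phi^c$ is differentiable, every $y\in T_\phi(x)$ furnishes a smooth support-from-above $x'\mapsto \phi(y)+\frac{1}{2\tau}|x'-y|^2\geq \phi^c(x')$ touching $\phi^c$ at $x$; differentiating at $x$ forces $y=x-\tau\nabla\phi^c(x)$, so $T_\phi(x)$ is a singleton on the full-measure set where $\phi^c$ is differentiable, and the argument for $S_\psi$ is symmetric.

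For the perturbation formulas I would use a standard envelope argument. For the upper bound, plugging the test point $y=T_\phi(x)$ into the infimum defining $(\phi+tu)^c(x)$ gives
\[
(\phi+tu)^c(x)-\phi^c(x)\leq t\,u(T_\phi(x)).
\]
For the matching lower bound, take any minimizer $y_t$ of $y\mapsto \phi(y)+tu(y)+\frac{1}{2\tau}|x-y|^2$; by compactness of $\overline{\Omega}$ and the a.e.\ uniqueness established above, $y_t\to T_\phi(x)$ as $t\to 0^+$. Since $\phi(y_t)+\frac{1}{2\tau}|x-y_t|^2\geq \phi^c(x)$, we have $(\phi+tu)^c(x)\geq \phi^c(x)+tu(y_t)$, and the continuity of $u$ delivers the matching lower limit. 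The formula~\eqref{eq:c_bar_variation} is identical up to swapping $\inf$ and $\sup$.

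For the $c$-conjugate case, I would derive the gradient formulas by a direct envelope computation. At a.e.\ $y$, the $c$-convex function $\phi=\psi^{\bar{c}}$ is differentiable and $S_\psi(y)$ is a singleton. The defining identity $\phi(y)=\psi(S_\psi(y))-\frac{1}{2\tau}|S_\psi(y)-y|^2$ combined with the trivial competitor bound $\phi(y+tv)\geq \psi(S_\psi(y))-\frac{1}{2\tau}|S_\psi(y)-y-tv|^2$ yields, after expanding and dividing by $t>0$, $\nabla\phi(y)\cdot v\geq \frac{1}{\tau}(S_\psi(y)-y)\cdot v$; running the same argument with $-v$ forces equality, so $S_\psi(y)=y+\tau\nabla\phi(y)$, and the formula $T_\phi(x)=x-\tau\nabla\psi(x)$ is symmetric. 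For the composition identity, setting $x=S_\psi(y)$ the relation above reads $\phi(y)+\frac{1}{2\tau}|x-y|^2=\psi(x)=\phi^c(x)$, so $y$ attains the infimum defining $\phi^c(x)$ and the a.e.\ uniqueness of $T_\phi$ forces $T_\phi(S_\psi(y))=y$; the other composition is handled symmetrically.

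The main obstacle I anticipate is the bookkeeping: each sub-statement rests on its own ``almost every'' qualifier (differentiability of $\phi$, of $\phi^c$, single-valuedness of $T_\phi$, of $S_\psi$), and the composition identities interlock them. Thanks to the regularity paragraph all these sets are of full measure, and $c$-conjugacy aligns the two maps so that differentiability of $\phi$ at $y$ matches single-valuedness of $S_\psi$ at $y$; keeping track of these alignments is where the care must go.
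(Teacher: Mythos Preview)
The paper does not prove this proposition: it is stated as a known result and attributed to the cited references \cite{gangbo1994elementary, gangbo1995quelques, gangbo_mccann}, with no argument supplied in either the main text or the appendix. So there is no ``paper's own proof'' to compare against.

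That said, your proposal is correct and is essentially the standard argument one finds in those references. The key steps---semi-convexity of $c$-transforms via the expansion of the square, a.e.\ differentiability by Rademacher, uniqueness of the argmin from the touching condition at differentiability points, and the envelope/Danskin argument for the perturbation formula---are all sound. One small point worth tightening: in your lower-bound step you invoke compactness of $\overline{\Omega}$ to extract a limit of the minimizers $y_t$, but the infimum in~\eqref{eq:c_transform} is taken over the open set $\Omega$. This is harmless here because $c$-convex $\phi$ extends continuously to $\overline{\Omega}$ (indeed it is Lipschitz), so the infimum over $\Omega$ and over $\overline{\Omega}$ coincide; just make that explicit. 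Your final paragraph correctly identifies the only real subtlety, namely tracking which full-measure sets are in play for each ``a.e.'' claim and why $c$-conjugacy aligns them.
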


\begin{prop}[\cite{otam}]\label{lem:c_duality}
If $\mu, \nu\in L^1(\Omega)$ are nonnegative densities with the same mass, then
\[
 \frac{1}{2\tau}W_2^2(\mu,\nu)=\sup_{\phi\in C(\Omega)} \int_{\Omega} \phi^c(x) \,\mu(x)dx -\int_{\Omega} \phi(y)\,\nu(y) dy,
\]
\[
 \frac{1}{2\tau}W_2^2(\mu,\nu)=\sup_{\psi\in C(\Omega)} \int_{\Omega} \psi(x)\, \mu(x)dx -\int_{\Omega} \psi^{\bar{c}}(y)\,\nu(y)dy.
\]
\end{prop}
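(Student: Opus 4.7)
The plan is to reduce Proposition~\ref{lem:c_duality} to the constrained Kantorovich duality~\eqref{eq:kantorovich-dual0}, which we take as known from the introduction (it is the standard duality for quadratic optimal transport). The strategy is to use the $c$-transforms to eliminate one of the two variables in the constrained dual by a pointwise optimization argument, exactly as motivated in Section~\ref{section:intro-overall-approach}.

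First, I would prove the first identity. Starting from~\eqref{eq:kantorovich-dual0}, fix any admissible pair $(\phi,\psi)\in\mathcal{C}$. The constraint $\psi(x)-\phi(y)\le \frac{1}{2\tau}|x-y|^2$ holds for all $y\in\Om$, so taking the infimum over $y$ yields $\psi(x)\le \phi^c(x)$ for every $x$. On the other hand, by the very definition of $\phi^c$, the pair $(\phi,\phi^c)$ belongs to $\mathcal{C}$. Since $\mu\ge 0$ as a density in $L^1(\Om)$, replacing $\psi$ by $\phi^c$ can only increase $\int \psi\,d\mu$, while the other term is unchanged. Therefore the supremum over $\mathcal{C}$ equals the supremum over pairs of the form $(\phi,\phi^c)$, which is exactly
\[
\sup_{\phi\in C(\Om)} \int_\Om \phi^c(x)\,\mu(x)\,dx-\int_\Om \phi(y)\,\nu(y)\,dy.
\]

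For the second identity, the argument is symmetric. For any $(\phi,\psi)\in\mathcal{C}$, taking the supremum over $x$ in the constraint gives $\psi^{\bar c}(y)\le \phi(y)$, and by definition $(\psi^{\bar c},\psi)\in\mathcal{C}$. Since $\nu\ge 0$, replacing $\phi$ by $\psi^{\bar c}$ decreases $\int \phi\,d\nu$ and hence increases the dual objective, while leaving $\int\psi\,d\mu$ untouched. Thus the supremum in~\eqref{eq:kantorovich-dual0} is attained on pairs $(\psi^{\bar c},\psi)$, yielding the second formula.

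There is no serious obstacle here: the whole content is the two pointwise comparisons $\psi\le \phi^c$ for admissible $\psi$ and $\psi^{\bar c}\le \phi$ for admissible $\phi$, combined with the sign conditions on $\mu$ and $\nu$. The only subtle point worth a line of justification is that the replacement $\psi\mapsto \phi^c$ (resp.\ $\phi\mapsto \psi^{\bar c}$) keeps the new function continuous on $\Om$, so that the supremum is indeed taken over the correct class $C(\Om)$; this follows from the fact that the infimal/supremal convolutions with the smooth kernel $\frac{1}{2\tau}|x-y|^2$ on a bounded set produce Lipschitz, hence continuous, functions. Once these observations are in place, the two formulas drop out without further computation.
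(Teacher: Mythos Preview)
Your argument is correct. Note, however, that the paper does not actually prove Proposition~\ref{lem:c_duality}: it is stated as a known result with a citation to~\cite{otam}, so there is no proof in the paper to compare against. That said, your reduction via pointwise optimization of the constraint is exactly the heuristic the authors spell out informally in Section~\ref{section:intro-overall-approach} when motivating the unconstrained functionals $J$ and $I$, so your approach is fully in line with the paper's exposition.
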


Now we can state the fundamental result that guarantees the existence and uniqueness of the optimal transport maps.
\begin{theorem}[\cite{brenier_polar,gangbo_habilitation,gangbo_mccann}] \label{thm:fund_ot}
If $\mu, \nu\in L^1(\Omega)$ are nonnegative densities with the same mass, then there exists a $c$-conjugate pair $(\phi_*, \psi_*)$ such that
\[
\phi_*\in\argmax_{\phi\in C(\Omega)} \int_{\Omega} \phi^c(x) \,\mu(x) dx-\int_{\Omega} \phi(y)\,\nu(y) dy,
\]
\[
\psi_*\in\argmax_{\psi\in C(\Omega)} \int_{\Omega} \psi(x) \,\mu(x) dx-\int_{\Omega} \psi^{\bar{c}}(y)\,\nu(y) dy,
\]
\[
\frac{1}{2\tau}W_2^2(\mu,\nu)=\int_{\Omega} \psi_*(x)\,\mu(x) dx-\int_{\Omega} \phi_*(y)\,\nu(y) dy,
\]
and $T_{\phi_*}, S_{\psi_*}$ are the unique optimal transport maps sending $\mu$ to $\nu$ and $\nu$ to $\mu$ respectively, i.e. 
$T_{\phi_*\,\#}\mu=\nu$ and $S_{\psi_*\,\#}\nu=\mu$.
\end{theorem}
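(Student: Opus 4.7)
The plan is to combine the direct method of calculus of variations for existence of maximizers, the perturbation formulas (\ref{eq:c_variation})--(\ref{eq:c_bar_variation}) for the transport-map identities, and a complementary-slackness reading of the duality for uniqueness.

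For existence, I would first use Lemma~\ref{lem:ccc} to restrict the $\phi$-supremum to $c$-convex functions: replacing any admissible $\phi$ by $\phi^{c\bar c}$ only lowers it pointwise while leaving $\phi^c$ unchanged, so the value of the functional can only increase. A $c$-convex $\phi = (\phi^c)^{\bar c}$ is a supremum of functions $x \mapsto \phi^c(y) - \frac{1}{2\tau}|x-y|^2$ on the bounded domain $\Omega$, hence Lipschitz with a constant depending only on $\mathrm{diam}(\Omega)$ and $\tau$. Since the functional is invariant under $\phi \mapsto \phi + a$ (using that $\mu$ and $\nu$ have equal mass), I normalize $\phi(y_0) = 0$ at a fixed $y_0 \in \Omega$ to obtain a uniform $L^\infty$ bound. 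Arzelà--Ascoli then extracts a uniformly convergent subsequence from a maximizing sequence; stability of inf-convolutions under uniform convergence gives convergence of the $c$-transforms as well, so the functional is continuous and admits a maximizer $\phi_*$. Setting $\psi_* := \phi_*^c$ and invoking $c$-convexity of $\phi_*$ gives $\psi_*^{\bar c} = \phi_*^{c\bar c} = \phi_*$, so $(\phi_*, \psi_*)$ is $c$-conjugate; Proposition~\ref{lem:c_duality} then identifies the common optimal value with $\frac{1}{2\tau}W_2^2(\mu,\nu)$ and shows $\psi_*$ maximizes the second functional as well.

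For the pushforward identity, I would perturb: for $u \in C(\Omega)$ and $t$ small, the map $g(t) := \int_\Omega (\phi_* + tu)^c(x)\, \mu(x)\,dx - \int_\Omega (\phi_* + tu)(y)\,\nu(y)\,dy$ is maximized at $t = 0$. By (\ref{eq:c_variation}) and dominated convergence (difference quotients are uniformly bounded by the Lipschitz estimate on $(\phi_* + tu)^c$ for small $t$), the right derivative at $0$ equals $\int_\Omega u(T_{\phi_*}(x))\,\mu(x)\,dx - \int_\Omega u(y)\,\nu(y)\,dy$. Maximality forces this to be nonpositive, and applying the same argument to $-u$ reverses the inequality, giving equality for every $u \in C(\Omega)$. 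Hence $T_{\phi_*\,\#}\mu = \nu$; a symmetric perturbation using (\ref{eq:c_bar_variation}) yields $S_{\psi_*\,\#}\nu = \mu$.

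For uniqueness, suppose $T$ is any optimal transport map with $T_\#\mu = \nu$. Combining the equality between primal and dual optima with the dual constraint $\psi_*(x) - \phi_*(y) \leq \frac{1}{2\tau}|x-y|^2$ forces the complementary-slackness identity $\psi_*(x) - \phi_*(T(x)) = \frac{1}{2\tau}|x - T(x)|^2$ for $\mu$-a.e.\ $x$. Thus $T(x)$ realizes the infimum defining $\phi_*^c(x) = \psi_*(x)$, and Proposition~\ref{prop:c_transform_variation} asserts that this minimizer is unique almost everywhere, so $T = T_{\phi_*}$ $\mu$-a.e.; symmetrically for $S_{\psi_*}$. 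The main obstacle I expect is making the direct-method compactness fully rigorous---in particular, the uniform Lipschitz and $L^\infty$ bounds after normalization, and the continuity of the $c$-transform under uniform convergence of $c$-convex functions. Once that compactness is secured, the transport identity and uniqueness follow essentially by bookkeeping on Proposition~\ref{prop:c_transform_variation}.
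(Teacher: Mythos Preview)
The paper does not supply a proof of Theorem~\ref{thm:fund_ot}; it is stated as a known result with citations to Brenier and Gangbo--McCann, so there is no in-paper proof to compare against. Your sketch is a sound outline of the classical argument: compactness of $c$-convex potentials via equi-Lipschitz bounds and normalization, the first-order condition via the one-sided derivative formula~(\ref{eq:c_variation}) applied to $\pm u$, and uniqueness through complementary slackness combined with the a.e.\ uniqueness of the argmin in Proposition~\ref{prop:c_transform_variation}. The only points you would need to tighten in a full write-up are the ones you flag yourself: justifying dominated convergence for the difference quotients (the uniform Lipschitz bound on $c$-transforms handles this) and checking stability of the $c$-transform under uniform convergence (immediate from the defining inf/sup).
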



\subsection{Convex duality} \label{ssec:duality}
Now that we have developed the basics of optimal transport, we are ready to return to the JKO scheme. 
To iterate the JKO scheme, one must be able to solve  generalized optimal transport (GOT) problems of the form
\begin{equation} \label{eq:got}
\rho_*=\argmin_{\rho\in L^1(\Om)} U(\rho) + \frac{1}{2\tau} W_2^2(\rho, \mu),
\end{equation}
where $\mu\in L^1(\Om)$ is a given nonnegative density.   Our method solves the GOT problem by appealing to its dual formulation.  In the rest of this subsection, we shall derive the dual problem and develop its basic properties.  To obtain a well-behaved dual problem, we shall need the following assumptions on the energy $U$. 
\begin{assumption} \label{assumption:U}
    The internal energy is given by a proper, convex, and lower semicontinuous functional $U\colon L^1(\Omega) \rightarrow \R \cup \{+\infty\}$ such that $U(\rho) = \infty$ if $\rho$ is negative on a set of positive measure.  
\end{assumption}

\begin{assumption} \label{assumption:U2}
    There exists a function $s\colon\R\to \R\cup\{+\infty\} $ with superlinear growth such that 
    \[
    U(\rho)\geq \int_{\Omega} s(\rho(y))\, dy.
    \]
\end{assumption}

\begin{remark}
Assumption~\ref{assumption:U} encodes the fact that $\rho$ must be a nonnegative density,  while Assumption~\ref{assumption:U2} guarantees that for each $B\in \R$ the sets $\{\rho\in L^1(\Omega): U(\rho)<B \}$ are weakly compact.  
\end{remark}
\begin{remark}
Except for the convexity requirement, Assumptions 1 and 2 are very natural in the context of Wasserstein gradient flows.  Note that we will eventually consider non-convex $U$ in Section~\ref{sec:nonconvex}.  
\end{remark}

At the heart of duality is the notion of convex conjugation.
\begin{definition}\label{def:legendre}
Given a functional $U\colon L^1(\Omega)\to\R$ its convex conjugate $U^*\colon L^{\infty}(\Omega)\to\R$ is defined by
\[
    U^*(\phi) := \sup_{\rho\in L^1(\Om)}\int_\Omega \phi(x) \rho(x)\,dx - U(\rho),
\]
\end{definition}

Thanks to Assumption 1, $U^*$ possess an important monotonicity property.

\begin{lemma}\label{lemma:monotonicity-Ustar}
$U^*$ is monotonically increasing, i.e. if $\phi_0, \phi_1\colon\Omega\to\R$ are functions such that $\phi_0\leq \phi_1$ pointwise everywhere, then
\[
U^*(\phi_0) \le U^*(\phi_1).
\]
\end{lemma}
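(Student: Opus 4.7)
The plan is to reduce the supremum in Definition~\ref{def:legendre} to nonnegative densities and then use the pointwise inequality directly. By Assumption~\ref{assumption:U}, any $\rho\in L^1(\Omega)$ that is negative on a set of positive measure satisfies $U(\rho)=+\infty$, so the corresponding term $\int_\Omega \phi\rho\,dx - U(\rho)$ equals $-\infty$ and contributes nothing to the supremum. Hence
\[
U^*(\phi) = \sup_{\rho\geq 0}\ \int_\Omega \phi(x)\rho(x)\,dx - U(\rho),
\]
where the supremum is taken over nonnegative $\rho\in L^1(\Omega)$.

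The key step is then to exploit the sign of $\rho$. For any nonnegative $\rho$, the pointwise inequality $\phi_0\leq \phi_1$ yields $\phi_0(x)\rho(x)\leq \phi_1(x)\rho(x)$ almost everywhere, so that
\[
\int_\Omega \phi_0(x)\rho(x)\,dx - U(\rho)\ \leq\ \int_\Omega \phi_1(x)\rho(x)\,dx - U(\rho).
\]
Taking the supremum over $\rho\geq 0$ on both sides and invoking the restricted formula above gives $U^*(\phi_0)\leq U^*(\phi_1)$.

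There is really no major obstacle here; the only point that requires care is confirming that the effective domain of $U$ lies in the nonnegative cone, which is exactly the content of Assumption~\ref{assumption:U}. Once this observation is in place, monotonicity follows from a one-line sign argument and preservation of inequalities under suprema, with no need to invoke Assumption~\ref{assumption:U2} or any regularity of $U$.
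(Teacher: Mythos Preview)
Your proof is correct and follows essentially the same approach as the paper's own proof: restrict the supremum to nonnegative $\rho$ using Assumption~\ref{assumption:U}, then use the pointwise inequality $\phi_0\le\phi_1$ together with $\rho\ge 0$ and pass to the supremum.
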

\begin{proof}
By Assumption~\ref{assumption:U} the internal energy is finite only over nonnegative densities, thus, 
\[
    U^*(\phi) = \sup_{\rho \ge 0} \int_\Om \phi(x)\,\rho(x)dx - U(\rho). 
\]
If we take some $\rho\in L^1(\Om)$, with $\rho(x) \ge 0$ a.e., then we have
\[
    \int_\Om \phi_0(x)\,\rho(x)dx - U(\rho) \le \int_\Om \phi_1(x)\,\rho(x)dx - U(\rho). 
\]
Taking a supremum over $\rho\geq 0$ finishes the proof.
\end{proof}
Now we are ready to reintroduce the twin dual functionals $I$ and $J$.
\begin{prop}\label{prop:concavity}
Fix a nonnegative density $\mu\in L^1(\Om)$. The functionals $ I, J$ given by
\[
J(\phi):=\int_{\Omega} \phi^c(x)\,\mu(x) dx-U^*(\phi)
\]
\[
I(\psi):=\int_{\Omega} \psi(x)\,\mu(x) dx-U^*(\psi^{\bar{c}}),
\]
are proper, weakly upper semicontinuous, concave and $\sup_{\phi\in C(\Omega)} J(\phi)=\sup_{\psi\in C(\Omega)} I(\psi)$.  Furthermore, if $\phi$ is $c$-convex and $\psi$ is $c$-concave, then $J$ and $I$ have first variations
\[
\delta J(\phi)=T_{\phi\,\#}\mu-\delta U^*(\phi),
\]
\[
\delta I(\psi)=\mu-S_{\psi\,\#}\delta U^*(\psi^{\bar{c}}),
\]
where $\delta U^*$ is the first variation of $U^*$.
\end{prop}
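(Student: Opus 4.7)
The plan is to verify each claim in turn, exploiting the fact that the $c$-transforms are concave/convex operators and combining this with the monotonicity of $U^*$ (Lemma~\ref{lemma:monotonicity-Ustar}) and its convexity as a Legendre transform.

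\textbf{Concavity.} For each fixed $x\in\Om$, the map $\phi\mapsto \phi^c(x)=\inf_{y}\,\phi(y)+\tfrac{1}{2\tau}|x-y|^2$ is an infimum of affine functionals of $\phi$, hence concave (and upper semicontinuous in the uniform topology). Integrating against the nonnegative measure $\mu$ preserves concavity, while $U^*$ is convex as a Legendre transform, so $J$ is concave. For $I$, the map $\psi\mapsto \psi^{\bar c}(y)$ is a supremum of affine functionals, hence convex. Since $U^*$ is convex and, by Lemma~\ref{lemma:monotonicity-Ustar}, monotone nondecreasing, the composition $\psi\mapsto U^*(\psi^{\bar c})$ is convex, so $-U^*(\psi^{\bar c})$ is concave and $I$ is concave.

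\textbf{Equality of suprema.} Given $\phi\in C(\Om)$, set $\psi:=\phi^c$. By Lemma~\ref{lem:ccc}, $\psi^{\bar c}=\phi^{c\bar c}\leq \phi$, so by Lemma~\ref{lemma:monotonicity-Ustar} $U^*(\psi^{\bar c})\leq U^*(\phi)$, and therefore
\[
I(\psi)=\int_\Om \phi^c\,d\mu - U^*(\phi^{c\bar c})\ \geq\ \int_\Om \phi^c\,d\mu - U^*(\phi)=J(\phi).
\]
Conversely, given $\psi\in C(\Om)$, set $\phi:=\psi^{\bar c}$. Then $\phi^c=\psi^{\bar c c}\geq \psi$, and integrating against $\mu\geq 0$ yields $\int\phi^c\,d\mu\geq \int\psi\,d\mu$, so $J(\phi)\geq I(\psi)$. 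Taking suprema gives $\sup J=\sup I$.

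\textbf{Properness and upper semicontinuity.} For properness, choosing $\phi\equiv 0$ gives $\phi^c\equiv 0$ and $J(0)=-U^*(0)\in \R\cup\{-\infty\}$ but not $+\infty$; Assumption~\ref{assumption:U2} together with superlinearity of $s$ ensures $U^*(0)$ is finite (since the sup defining it is attained on bounded sets), so $J$ is proper, and similarly for $I$. For upper semicontinuity, note that the maps $\phi\mapsto \phi^c$ and $\psi\mapsto \psi^{\bar c}$ are $1$-Lipschitz in the uniform norm, and $U^*$, being a Legendre transform, is weakly lower semicontinuous in the relevant duality. The concavity proved above then upgrades sequential u.s.c. to weak u.s.c.\ in the standard way.

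\textbf{First variations.} Fix $u\in C(\Om)$ and assume $\phi$ is $c$-convex. By the perturbation formula~\eqref{eq:c_variation} and dominated convergence (the difference quotients are uniformly bounded since the $c$-transform is $1$-Lipschitz in $\phi$),
\[
\lim_{t\to 0^+}\frac{1}{t}\int_\Om\bigl((\phi+tu)^c-\phi^c\bigr)\,d\mu=\int_\Om u(T_\phi(x))\,\mu(x)\,dx=\int_\Om u\,d(T_{\phi\,\#}\mu).
\]
Combined with the definition of $\delta U^*$, this gives $\delta J(\phi)=T_{\phi\,\#}\mu-\delta U^*(\phi)$. For $I$, the analogous argument using~\eqref{eq:c_bar_variation} yields
\[
\lim_{t\to 0^+}\frac{U^*((\psi+tu)^{\bar c})-U^*(\psi^{\bar c})}{t}=\int_\Om \delta U^*(\psi^{\bar c})(y)\,u(S_\psi(y))\,dy=\int_\Om u\,d\bigl(S_{\psi\,\#}\delta U^*(\psi^{\bar c})\bigr),
\]
giving $\delta I(\psi)=\mu - S_{\psi\,\#}\delta U^*(\psi^{\bar c})$.

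The main obstacle I anticipate is the rigorous justification of the chain rule in the last computation for $I$: one must argue that $\delta U^*$ at the perturbed point $(\psi+tu)^{\bar c}$ passes to $\delta U^*(\psi^{\bar c})$ in the limit, which requires either differentiability of $U^*$ at $\psi^{\bar c}$ in a suitable sense, or working with subdifferentials and checking that the pushforward is well-defined under the change of variables $x=S_\psi(y)$. This is where the $c$-convexity/concavity assumption plays an essential role, since it guarantees that $T_\phi$ and $S_\psi$ are well-defined a.e.\ and act as inverses of each other (Proposition~\ref{prop:c_transform_variation}).
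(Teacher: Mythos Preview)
Your proof is correct, and the treatment of the equality of suprema and of the first variations matches the paper's essentially line for line. Your route to concavity and weak upper semicontinuity, however, is genuinely different.

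You argue pointwise: $\phi\mapsto\phi^c(x)$ is concave as an infimum of affine maps, and for $I$ you observe that $\psi\mapsto\psi^{\bar c}$ is pointwise convex while $U^*$ is convex \emph{and} monotone (Lemma~\ref{lemma:monotonicity-Ustar}), so the composition $U^*(\psi^{\bar c})$ is convex. This is clean and elementary. The paper instead unfolds both the Legendre transform defining $U^*$ and the $\bar c$-transform simultaneously, introducing couplings $\pi\in\Pi(\rho)$ to rewrite
\[
-U^*(\psi^{\bar c})=\inf_{\rho\ge 0}\ \inf_{\pi\in\Pi(\rho)}\ U(\rho)-\iint_{\Omega\times\Omega}\Bigl(\psi(x)-\tfrac{1}{2\tau}|x-y|^2\Bigr)\,d\pi(x,y),
\]
so that $I$ is manifestly an infimum of affine functionals of $\psi$. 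The payoff of the paper's detour is that properness, concavity, and weak upper semicontinuity all fall out at once from this single representation, whereas you establish each property by a separate argument (and your u.s.c.\ step, though correct via the ``concave plus norm-u.s.c.\ implies weakly u.s.c.'' principle for convex functions, is sketchier than the paper's structural argument). Your approach is more direct and avoids the coupling machinery; the paper's is more uniform and gets the topological conclusions for free.
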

\begin{proof}
Following the logic in the proof of Lemma~\ref{lemma:monotonicity-Ustar}, we may write 
\[
U^*(\psi^{\bar{c}})=\sup_{\rho\geq 0} \int_{\Omega} \psi^{\bar{c}}(y)\,\rho(y) dy-U(\rho).
\]
Next, let $\mathcal{M}(\Omega\times\Omega)$ denote the space of nonnegative measures on $\Omega\times\Omega$, and for any given density $\rho\geq 0$ define
\[
\Pi(\rho):=\left\{\pi\in\mathcal{M}(\Omega\times\Omega): \iint_{\Omega\times\Omega}f(y)\, d\pi(x,y) = \int_{\Omega}f(y)\,\rho(y) dy\; \textrm{for all} \; f\in C(\Omega)\right\}.
\]
Using the definition of the $c$-transform, we can then write
\[
\int_{\Omega}\psi^{\bar{c}}(y)\,\rho(y) dy=\sup_{\pi\in \Pi(\rho)} \iint_{\Omega\times \Omega} \big(\psi(x)-\frac{1}{2\tau}|x-y|^2\big)\,d\pi(x,y).
\]
Therefore, we have
\[
-U^*(\psi^{\bar{c}})=\inf_{\rho\geq 0}\inf_{\pi\in \Pi(\rho)} U(\rho)-\iint_{\Omega\times\Omega} \big(\psi(x)-\frac{1}{2\tau}|x-y|^2\big)\, d\pi(x,y).
\]
Now it is clear that $I$ can be written as the infimum over a family of linear functionals of $\psi$.  Hence, $I$ must be proper, concave and weakly upper semicontinuous.  An essentially identical argument applies to $J$. 

Since $U^*$ is monotonically increasing, Lemma~\ref{lemma:monotonicity-Ustar} implies that for any $\phi, \psi\in C(\Omega)$
\[
J(\phi)\leq I(\phi^{c}), \quad I(\psi)\leq J(\psi^{\bar{c}}).
\]
Therefore, we must have 
\[
\sup_{\psi\in C(\Omega)} I(\psi)=\sup_{\phi\in C(\Omega)} J(\phi).
\]
When $\phi$ and $\psi$ are $c$-convex/concave respectively, the formulas for the first variations follow directly from Proposition~\ref{prop:c_transform_variation}.
\end{proof}

Finally, we conclude this subsection by stating the essential result linking  the  primal and dual generalized optimal transport problems.  Crucially, this shows how to recover the solution to (\ref{eq:got}) from the maximizers of $I$ and $J$. 
\begin{theorem}[\cite{jacobskimtongL1}]\label{thm:dual-gap-zero} 
If $\mu\in L^1(\Omega)$, $U$ satisfies Assumptions 1, 2, and $\delta U(\mu)$ is not a constant function,  then there 
exists a unique density $\rho_*$ and a
pair of $c$-conjugate functions $(\phi_*, \psi_*)$ such that 
\[
\rho_*=\argmin_{\rho\in L^1(\Omega)} U(\rho)+\frac{1}{2\tau}W_2^2(\rho,\mu),\quad 
\phi_*\in \argmax_{\phi\in C(\Omega)} J(\phi),\quad \psi_*\in\argmax_{\psi\in C(\Omega)} I(\psi),
\]
\[
U(\rho_*)+\frac{1}{2\tau}W_2^2(\rho_*,\mu)=J(\phi_*)= I(\psi_*),
\]
\[
\rho_*\in \delta U^*(\phi_*), \quad \phi_*\in \delta U(\rho_*), \quad \rho_*=T_{\phi_*\,\#}\mu.
\]
\end{theorem}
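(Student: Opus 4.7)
The plan is to establish the theorem in three stages: (i) existence and uniqueness of the primal minimizer $\rho_*$; (ii) zero duality gap together with existence of a $c$-conjugate maximizing pair $(\phi_*,\psi_*)$; and (iii) the pointwise identifications $\rho_*\in\delta U^*(\phi_*)$, $\phi_*\in\delta U(\rho_*)$, and $\rho_*=T_{\phi_*\,\#}\mu$.

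For stage (i), I would apply the direct method to $F(\rho):=U(\rho)+\frac{1}{2\tau}W_2^2(\rho,\mu)$. Assumption~\ref{assumption:U2} supplies the superlinear lower bound $\int_\Omega s(\rho)\,dy$, so the de la Vallée Poussin criterion makes any minimizing sequence weakly sequentially compact in $L^1(\Omega)$. Weak lower semicontinuity of $U$ (Assumption~\ref{assumption:U}) and of $\rho\mapsto W_2^2(\rho,\mu)$ then produce a minimizer $\rho_*$. Uniqueness follows from the convexity of $F$: if two minimizers existed, the stationarity of $F$ along the linear segment joining them, combined with the strict displacement-convexity of $W_2^2(\cdot,\mu)$ (equivalently, the uniqueness of the Brenier map in Theorem~\ref{thm:fund_ot}), forces them to coincide.

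For stage (ii), I would start from the saddle-point formulation
\[
\inf_{\rho\geq 0}\sup_{(\phi,\psi)\in\mathcal{C}}\Bigl(U(\rho)+\int_\Omega \psi(x)\,d\mu(x)-\int_\Omega \phi(y)\,\rho(y)\,dy\Bigr),
\]
whose inner supremum reproduces $F(\rho)$ by Proposition~\ref{lem:c_duality}. Swapping the inf and sup is legitimate via a Fenchel--Rockafellar argument: $U$ is convex and lower semicontinuous, the Kantorovich constraint $\mathcal{C}$ is closed and convex, and the coupling term is continuous at an admissible $\rho$. After the swap and the elimination $\psi=\phi^c$, one obtains $\sup_\phi J(\phi)=F(\rho_*)$. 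A maximizing sequence can be chosen $c$-convex and hence uniformly Lipschitz; the hypothesis that $\delta U(\mu)$ is not constant is used to fix the additive constant so that the sequence is uniformly bounded. Arzelà--Ascoli combined with the upper semicontinuity of $J$ from Proposition~\ref{prop:concavity} then delivers $\phi_*\in\argmax J$. Setting $\psi_*:=\phi_*^c$ yields a $c$-conjugate pair by Lemma~\ref{lem:ccc}, and by symmetry $\psi_*\in\argmax I$.

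For stage (iii), the vanishing of $\delta J(\phi_*)=T_{\phi_*\,\#}\mu-\delta U^*(\phi_*)$ from Proposition~\ref{prop:concavity} gives the inclusion $T_{\phi_*\,\#}\mu\in\delta U^*(\phi_*)$. On the other hand, zero duality gap forces the Fenchel--Young equality $U(\rho_*)+U^*(\phi_*)=\int_\Omega \phi_*(y)\,\rho_*(y)\,dy$, which is equivalent to both $\rho_*\in\delta U^*(\phi_*)$ and $\phi_*\in\delta U(\rho_*)$. The non-constancy of $\delta U(\mu)$ rules out degenerate cases and ensures that $\delta U^*(\phi_*)$ is effectively single-valued at $\phi_*$, so that $\rho_*$ must equal $T_{\phi_*\,\#}\mu$. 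The main obstacle throughout is stage (ii): controlling the additive-constant degeneracy of the Kantorovich dual while maintaining compactness in the non-reflexive space $C(\Omega)$, and justifying the inf--sup swap in the presence of the singular energy $U^*$. Once those points are handled, the remaining identifications reduce to routine applications of the $c$-transform first-variation formulas and Fenchel--Young duality developed in Section~\ref{section:background-optimal-transport}.
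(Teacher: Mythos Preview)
The paper does not prove this theorem at all: it is stated with an explicit citation to~\cite{jacobskimtongL1} and no argument is given in the body or in the appendix. So there is no ``paper's own proof'' to compare your proposal against; any assessment has to be on the merits of your sketch alone.

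Your overall three-stage structure is the natural one and stages (ii) and (iii) are essentially correct in outline. The one genuine soft spot is your uniqueness argument in stage (i). You invoke ``strict displacement-convexity of $W_2^2(\cdot,\mu)$'' while interpolating along the \emph{linear} segment in $L^1(\Omega)$; these are different notions. Along linear interpolations $\rho_t=(1-t)\rho_0+t\rho_1$, the map $\rho\mapsto W_2^2(\rho,\mu)$ is convex but not strictly convex in general, and Assumption~\ref{assumption:U} only gives convexity of $U$, not strict convexity. So the combination does not immediately rule out two distinct minimizers. A cleaner route is to argue at the level of transport plans: if $\rho_0$ and $\rho_1$ were both minimizers, the optimality conditions in stage (iii) force both to satisfy $\rho_i=T_{\phi_*\,\#}\mu$ for the \emph{same} maximizer $\phi_*$ (since the dual value is unique), which gives $\rho_0=\rho_1$ directly. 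Alternatively, one can appeal to the strict convexity of $W_2^2$ along generalized geodesics based at $\mu$ when $\mu\in L^1(\Omega)$, but then you must interpolate displacement-wise, not linearly.
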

\begin{remark}
Note that if $\delta U(\mu)$ is constant, then $\mu=\argmin_{\rho\in L^1(\Omega)} U(\rho)+\frac{1}{2\tau}W_2^2(\rho,\mu)$.  Thus, the excluded case is trivial. 
\end{remark}


\subsection{Concave gradient ascent}\label{subsection:gradient-ascent}
Now that we see how to link the JKO scheme to the dual functionals $I$ and $J$, it remains to develop a method to find the maximizers of $I$ and $J$.  To that end, 
in this subsection, we review classical unconstrained gradient ascent. Let us first recall the notion of \emph{gradient}. 
This will require the structure of a real Hilbert space $\mathcal{H}$ with inner product $\langle \cdot, \cdot \rangle_{\mathcal{H}}$ and norm $\norm{\cdot}_{\mathcal{H}}$.

\begin{definition}\label{def:first-variation}
 Given a point $\varphi\in\mathcal{H}$, we say that a bounded linear map $\delta F(\varphi)\colon\mathcal{H}\rightarrow\R$ is the first variation (Fr\'echet derivative) of $F$ at $\varphi$ if
\[
    \lim_{\|h\|_\mathcal{H}\to 0} \frac{\|F(\varphi + h) - F(\varphi) - \delta F(\varphi)(h)\|_\mathcal{H}}{\|h\|_{\mathcal{H}}} = 0.
\]
\end{definition}

\begin{definition}\label{def:H-gradient}
We say that a map $\nabla_\mathcal{H} F\colon\mathcal{H}\rightarrow\mathcal{H}$ is the $\mathcal{H}$-gradient of $F$ (or simply gradient if there is no ambiguity about the space $\calH$) if
\[
\langle \nabla_\mathcal{H} F(\varphi),h \rangle_\calH = \delta F(\varphi) (h)
\]
for all $(\varphi,h) \in \mathcal{H}\times \mathcal{H}$.
\end{definition}

The above identity highlights that  gradients are intimately linked to the 
inner product of the Hilbert space, in contrast to  first variations. Indeed, note that one can define the notion of a first variation over any normed vector space, while the notion of a gradient requires an inner product.

\subsubsection*{Gradient ascent method}
Given a concave functional $J$ over $\calH$, consider the gradient ascent iterations
\begin{equation}\label{eq:ga_standard}
    \phi_{k+1} = \phi_{k} + \nabla_{\mathcal{H}} J(\phi_{k}).
\end{equation}
The gradient ascent scheme (\ref{eq:ga_standard}) can equivalently be written in the variational form
\begin{equation}\label{eq:variational_ga}
\phi_{k+1}=\argmax_{\phi} J(\phi_k)+\delta J(\phi_k)(\phi-\phi_k)-\frac{1}{2}\norm{\phi-\phi_k}_{\calH}^2.
\end{equation}
Note that equations (\ref{eq:ga_standard}) and (\ref{eq:variational_ga}) typically include a step size parameter that controls how far one travels in the gradient direction.  For reasons that will become clear shortly (see equation (\ref{eq:def-metric}) and the subsequent discussion), we prefer to incorporate any parameters into the norm $\norm{\cdot}_{\calH}$ itself.

In order to obtain convergence of the scheme
\[
    J(\phi_k) \xrightarrow[k\to\infty]{} \sup_\phi J(\phi),
\]
with an efficient rate, it is essential to choose the norm $\norm{\cdot}_{\calH}$ properly. If the norm is too weak, then the algorithm may become unstable and fail to converge.  On the other hand, if the norm is too strong, then very little change happens at each step and the algorithm converges slowly.  The following theorem, one of the cornerstones of optimization,  explains how to balance these competing  considerations.

\begin{theorem}[\cite{nesterov2013introductory}] \label{thm:optim-gradient-ascent}
Let $J\colon \calH\to\Real$ be a twice Fréchet-differentiable concave functional with maximizer $\phi^*$.  If
\begin{equation} \label{eq:smoothinthm}
    -\delta^2\!J(\phi)(h,h) \le \norm{h}_{\calH}^2,
\end{equation}
for all $\phi,h\in \calH$ ($J$ is said to be ``$1$-smooth''), then the gradient ascent scheme
\[
    \phi_{k+1}=\phi_k+\nabla_{\!\calH}J(\phi_k)
\]
starting at a point $\phi_0$ satisfies the ascent property
\begin{equation}\label{eq:ascent_property}
    J(\phi_{k+1})\geq J(\phi_k)+\frac{1}{2}\norm{\nabla_{\calH}J(\phi_k)}^2_{\calH},
\end{equation}
and has the convergence rate
\begin{equation} \label{eq:ascentinthm}
J(\phi^*)-J(\phi_{k})\leq  \frac{\norm{\phi^*-\phi_0}^2_{\mathcal{H}}}{2k}.
\end{equation}

\end{theorem}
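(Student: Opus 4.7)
The plan is to follow the standard Nesterov-style argument, adapted to the concave maximization setting. The two claims (ascent property and $O(1/k)$ rate) both flow from a single ingredient, the so-called \emph{ascent lemma}: for any $\varphi,h\in\calH$,
\[
    J(\varphi+h)\;\ge\;J(\varphi)+\delta J(\varphi)(h)-\tfrac{1}{2}\norm{h}_{\calH}^2.
\]
To obtain this, I would write the second-order Taylor expansion with integral remainder,
\[
J(\varphi+h)=J(\varphi)+\delta J(\varphi)(h)+\int_0^1(1-t)\,\delta^2 J(\varphi+th)(h,h)\,dt,
\]
and apply the smoothness hypothesis~\eqref{eq:smoothinthm} pointwise in $t$ to bound the remainder from below by $-\tfrac12\norm{h}_{\calH}^2$.

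Specializing the ascent lemma to $h=\nabla_{\!\calH}J(\phi_k)$, and using the defining identity $\delta J(\phi_k)(\nabla_{\!\calH}J(\phi_k))=\norm{\nabla_{\!\calH}J(\phi_k)}_{\calH}^2$ from Definition~\ref{def:H-gradient}, gives
\[
J(\phi_{k+1})\ge J(\phi_k)+\norm{\nabla_{\!\calH}J(\phi_k)}_{\calH}^2-\tfrac{1}{2}\norm{\nabla_{\!\calH}J(\phi_k)}_{\calH}^2,
\]
which is precisely the ascent property~\eqref{eq:ascent_property}. This step is essentially bookkeeping once the ascent lemma is in hand.

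For the convergence rate, I would combine concavity with the three-point identity that comes from expanding $\norm{\phi^*-\phi_{k+1}}_{\calH}^2=\norm{\phi^*-\phi_k-\nabla_{\!\calH}J(\phi_k)}_{\calH}^2$. Concavity of $J$ yields
\[
J(\phi^*)\le J(\phi_k)+\langle\nabla_{\!\calH}J(\phi_k),\phi^*-\phi_k\rangle_{\calH},
\]
and the expansion above rewrites the inner product as
\[
\langle\nabla_{\!\calH}J(\phi_k),\phi^*-\phi_k\rangle_{\calH}=\tfrac12\bigl(\norm{\phi^*-\phi_k}_{\calH}^2-\norm{\phi^*-\phi_{k+1}}_{\calH}^2+\norm{\nabla_{\!\calH}J(\phi_k)}_{\calH}^2\bigr).
\]
Substituting the ascent inequality $\norm{\nabla_{\!\calH}J(\phi_k)}_{\calH}^2\le 2(J(\phi_{k+1})-J(\phi_k))$ and rearranging cancels the $J(\phi_k)$ terms and gives the key one-step bound
\[
J(\phi^*)-J(\phi_{k+1})\;\le\;\tfrac12\bigl(\norm{\phi^*-\phi_k}_{\calH}^2-\norm{\phi^*-\phi_{k+1}}_{\calH}^2\bigr).
\]

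Finally I would telescope this inequality from $k=0$ to $k=K-1$, which collapses the right-hand side to $\tfrac12\norm{\phi^*-\phi_0}_{\calH}^2$, and invoke monotonicity of $J(\phi_k)$ (from the ascent property) to replace each $J(\phi^*)-J(\phi_{k+1})$ on the left by the smallest term $J(\phi^*)-J(\phi_K)$, yielding $K\bigl(J(\phi^*)-J(\phi_K)\bigr)\le \tfrac12\norm{\phi^*-\phi_0}_{\calH}^2$, which is exactly~\eqref{eq:ascentinthm}. The only real obstacle is isolating the correct form of the one-step estimate so that telescoping works cleanly; once one writes down the three-point identity and feeds in the ascent inequality in the right order, everything falls out. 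All other steps are routine manipulations with Taylor's formula, concavity, and the Riesz representation of $\delta J$ via $\nabla_{\!\calH}J$.
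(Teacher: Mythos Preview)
Your argument is correct and is the standard Nesterov proof of this classical result. Note, however, that the paper does not actually supply its own proof of Theorem~\ref{thm:optim-gradient-ascent}: the theorem is stated with a citation to~\cite{nesterov2013introductory} and used as a black box, so there is no in-paper proof to compare against.
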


From Theorem~\ref{thm:optim-gradient-ascent}, we can again see the competing interests of weakening or strengthening the norm $\norm{\cdot}_{\calH}$.  A stronger norm makes it easier to satisfy equation (\ref{eq:smoothinthm}), while a weaker norm gives a better convergence rate in (\ref{eq:ascentinthm}).  Putting these considerations together, we see that it is optimal to choose the weakest possible norm such that (\ref{eq:smoothinthm}) holds.

\subsubsection*{Sobolev norm}
Let $\Om$ be an open bounded convex subset of $\Rd$. Our gradient ascent schemes use a norm $H$ based on the Sobolev space $H^1(\Om)$. For two constants $\Theta_1>0$ and $\Theta_2>0$ we define
\begin{equation} \label{eq:def-metric}
\norm{h}_H^2=\int_\Omega \Theta_2 \abs{\nabla h(x)}^2 + \Theta_1 \abs{h(x)}^2\,dx.    
\end{equation}
The precise value of $\Theta_1$ and $\Theta_2$ will depend on the functional being maximized (see for instance Theorem~\ref{thm:J-smoothness} in Section~\ref{section:back-and-forth-method}).  In many instances, it will be optimal to choose $\Theta_1$ and $\Theta_2$ to have rather different values.  For this reason, we do not wish to reduce these parameters to a single step size value. 
The next lemma describes how to compute gradients with respect to this inner product.

\begin{lemma}\label{lemma:frechet}
Suppose that $F=F(\phi)$ is a Fr\'echet-differentiable functional such that for any $\phi$ the first variation $\delta F(\phi)$ evaluated at any point $h$ can be written as integration against a function $f_\phi$, i.e.
\[
    \delta F(\phi)(h) = \int_\Omega h(x) f_\phi(x) \,dx.
\]
Define $\norm{\cdot}_H$ by~\eqref{eq:def-metric}. Then the $H$-gradient of $F$ can be written
\[
    \nabla_{\!H} F(\phi) = (\Theta_1 \id - \Theta_2 \Delta)^{-1} f_\phi,
\]
where $\id$ is the identity operator and $\Delta$ is the Laplacian operator, taken together with zero Neumann boundary conditions.
\end{lemma}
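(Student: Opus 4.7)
The plan is to verify the defining property of the gradient directly: I want to exhibit a function $g = \nabla_{\!H} F(\phi) \in H^1(\Omega)$ satisfying
\[
\langle g, h\rangle_H = \delta F(\phi)(h) \quad\text{for all } h \in H^1(\Omega),
\]
and then show that this $g$ is exactly $(\Theta_1\id - \Theta_2\Delta)^{-1} f_\phi$ understood with zero Neumann boundary conditions. The starting point is to write out the $H$-inner product: for any candidate $g \in H^1(\Omega)$,
\[
\langle g, h \rangle_H = \int_\Omega \Theta_2\,\nabla g\cdot \nabla h + \Theta_1\, g\, h\,dx.
\]

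First I would address existence of $g$. The bilinear form $a(g,h) := \int_\Omega \Theta_2 \nabla g\cdot\nabla h + \Theta_1 g h\,dx$ is continuous and (since $\Theta_1,\Theta_2>0$) coercive on $H^1(\Omega)$, equivalent in fact to the squared $H^1$-norm. The linear functional $h\mapsto \delta F(\phi)(h) = \int_\Omega f_\phi h\,dx$ is continuous on $H^1(\Omega)$ as soon as $f_\phi \in (H^1(\Omega))^*$, in particular if $f_\phi\in L^2(\Omega)$. By the Lax--Milgram theorem there is a unique $g\in H^1(\Omega)$ such that $a(g,h) = \int_\Omega f_\phi h\,dx$ for all $h\in H^1(\Omega)$. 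This $g$ is by definition the $H$-gradient $\nabla_{\!H} F(\phi)$.

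Next I would identify $g$ as $(\Theta_1 \id - \Theta_2 \Delta)^{-1} f_\phi$ with Neumann conditions. Taking $h \in C_c^\infty(\Omega)$ in the Lax--Milgram identity and integrating by parts yields $(\Theta_1 g - \Theta_2 \Delta g)= f_\phi$ in the sense of distributions, hence in $L^2$ by elliptic regularity. Allowing $h \in H^1(\Omega)$ to have non-vanishing trace and integrating by parts again produces the boundary term $\Theta_2\int_{\partial\Omega} h\, \partial_\nu g\,d\sigma$; since the identity must hold for all such $h$, we deduce the Neumann condition $\partial_\nu g = 0$ on $\partial\Omega$. Thus $g$ solves the Neumann problem $(\Theta_1\id - \Theta_2\Delta) g = f_\phi$, which is precisely the statement of the lemma.

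I do not expect a real obstacle here: the argument is a textbook application of Lax--Milgram plus integration by parts. The only subtle point is the boundary condition, which is not assumed a priori but is forced by the choice to take $h$ ranging over all of $H^1(\Omega)$ rather than $H^1_0(\Omega)$. One should also briefly justify that $f_\phi \in L^2(\Omega)$ (or at least in the dual of $H^1$) so that Lax--Milgram applies; in the paper's applications this will be automatic.
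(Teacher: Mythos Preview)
Your argument is correct and essentially the same as the paper's, just run in the opposite direction: the paper starts from the Neumann problem $(\Theta_1\id-\Theta_2\Delta)g=f_\phi$, integrates by parts to obtain $\langle g,h\rangle_H=\int_\Omega h f_\phi$, and concludes that $g$ is the $H$-gradient, whereas you first produce $g$ via Lax--Milgram and then read off the PDE and the Neumann condition. Both rest on the same integration-by-parts identity; your version is slightly more explicit about existence and about why the Neumann condition appears.
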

\begin{proof}
Fix $\phi$ and consider the unique solution to the elliptic equation
\[
\begin{cases}
(\Theta_1 \id - \Theta_2 \Delta) g &= f_\phi \quad\text{in }\Om,\\
n\cdot\nabla g &= 0 \quad\text{on }\partial\Om.
\end{cases}
\]
Then we have the chain of equalities
\begin{align*}
    \delta F(\phi)(h) &= \int_\Omega h(x) f_\phi(x) \,dx\\
    &= \int_\Omega h(x) (\Theta_1 \id - \Theta_2 \Delta) g(x) \,dx\\
    &= \int_\Omega \Theta_1 h(x)g(x)+\Theta_2\nabla h(x)\cdot\nabla g(x)\,dx\\
    &= \bracket{h,g}_H.
\end{align*}
This shows that $g$ is the $H$-gradient of $F$.
\end{proof}

The above result can be restated as follows: the $H$-gradient of $F$ is obtained by ``preconditioning'' $\delta F$ with the inverse operator $(\Theta_1 \id - \Theta_2 \Delta)^{-1}$.


\section{The back-and-forth method}\label{section:back-and-forth-method}

Our goal is to develop an efficient algorithm for solving the JKO scheme
for a large class of interesting energies $U$. We begin in Section~\ref{ssec:bfm} with the case where $U$ is convex with respect to $\rho$.  In this case, the JKO scheme has an equivalent dual problem that we solve using an adaptation of the back-and-forth method from~\cite{jacobslegerbf}.   In Section~\ref{ssec:h1}, we show that the algorithm is gradient stable in a properly weighted $H^1$ space for convex energies of the form
\[
U(\rho)=\int_{\Omega} u_m(\rho(x))+V(x)\rho(x)\, dx,
\]
where $V\colon\Omega\to[0,+\infty]$ is a fixed function, and
\begin{equation}\label{eq:u_m}
u_m(\rho)=
\begin{cases}
\frac{\gamma}{m-1}(\rho^m-\rho) &\textrm{if} \; \rho\geq 0,\\
+\infty &\textrm{otherwise},\\
\end{cases}
\end{equation}
for some constants $\gamma>0$ and $m>1$.
We shall also consider the two limiting cases $m\to 1$ and $m\to\infty$.    Let us note that our analysis can be extended to more general functionals, however, we focus on the (important) special case above for clarity of exposition. 
After we have developed the method for convex energy functionals $U$, in Section~\ref{sec:nonconvex} we show how to generalize the algorithm for non-convex  $U$.

\subsection{The back-and-forth method for convex \texorpdfstring{$U$}{U}}\label{ssec:bfm}
To iterate the JKO scheme, we must be able to solve the generalized optimal transport (GOT) problem 
\begin{equation} \label{eq:got_main}
\rho_*=\argmin_{\rho\in L^1(\Om)} U(\rho) + \frac{1}{2\tau} W_2^2(\rho, \mu),
\end{equation}
for any fixed nonnegative density $\mu\in L^1(\Omega)$.
As we saw in Section~\ref{section:background} (see Theorem~\ref{thm:dual-gap-zero}), when $U$ is convex, the generalized optimal transport problem is in duality with 
the twin functionals $I$ and $J$, i.e.
\[
\inf_{\rho\in L^1(\Om)} U(\rho) + \frac{1}{2\tau} W_2^2(\rho, \mu) = \sup_{\phi} J(\phi)=\sup_{\psi} I(\psi).
\]
 Recall $I$ and $J$ are given by
\begin{equation}\label{eq:J_main}
J(\phi)=\int_{\Omega} \phi^c(x) \,\mu(x) dx-U^*(\phi),
\end{equation}
\begin{equation}\label{eq:I_main}
I(\psi)=\int_{\Omega} \psi(x) \,\mu(x)dx-U^*(\psi^{\bar{c}}).
\end{equation}
Furthermore, the minimizer $\rho_*$ of problem (\ref{eq:got_main}) is related to the maximizers $\phi_*, \psi_*$ through the relations
\begin{equation}\label{eq:duality_relations}
\rho_*=T_{\phi_*\,\#}\mu, \quad \rho_*\in \delta U^* (\phi_*), \quad \phi_*^c=\psi_*.
\end{equation}

Both $I$ and $J$ are unconstrained concave functionals (see Proposition~\ref{prop:concavity}), therefore, it is now clear that one can find the maximizer of either functional via standard gradient ascent methods. On the other hand, choosing to work with solely $I$ or solely $J$ breaks the symmetry of the problem. Thus, rather than focusing on only one of the functionals, the back-and-forth method performs alternating gradient ascent steps on $I$ and $J$.  Although $I$ and $J$ use different variables, we can switch between $\phi$ and $\psi$ by using the $c$-transform.  As noted in~\cite{jacobslegerbf}, the alternating steps on $I$ and $J$ substantially accelerate the convergence rate of the method beyond standard gradient ascent. 

We are now ready to introduce our approach to find the twin dual maximizers $(\phi_*, \psi_*)$ to problem (\ref{eq:got_main}). The method is outlined in Algorithm~\ref{algo:got} and is based on two main ideas:
\begin{enumerate}[1.]
\item 
A back-and-forth update scheme, alternating between gradient ascent steps on $I$ and $J$.
\item 
Gradient ascent steps in an $H^1$-type norm $H$, with
\begin{equation*}
\begin{aligned}
\nabla_{\!H} J(\phi) &= (\Theta_1\id - \Theta_2\laplacian)^{-1} \Big[T_{\phi\,\#}\mu - \delta U^*(\phi)\Big],\\
\nabla_{\!H} I(\psi) &= (\Theta_1\id - \Theta_2\laplacian)^{-1} \Big[\mu - S_{\psi\,\#}(\delta U^*(\psi^{\bar c}))\Big].
\end{aligned}
\end{equation*}
\end{enumerate}

\begin{algorithm}[h] 
\SetAlgoLined
    Given $\mu$ and $\phi_0$, iterate: 
        \begin{equation*}
            \begin{split}
                \phi_{k+\frac{1}{2}} &= \phi_{k} +  \nabla_{\!H} J(\phi_{k})\\
                \psi_{k+\frac{1}{2}} &= (\phi_{k+\frac{1}{2}})^{c}\\
                \psi_{k+1} &= \psi_{k+\frac{1}{2}} +  \nabla_{\!H}I(\psi_{k+\frac{1}{2}})\\
                \phi_{k+1} &= (\psi_{k+1})^{\bar{c}}
            \end{split}
        \end{equation*}
 \caption{The back-and-forth scheme for solving~\eqref{eq:J_main} and \eqref{eq:I_main}}
 \label{algo:got}
\end{algorithm}

Our ultimate goal is to show that each step of Algorithm~\ref{algo:got} increases the value of the functionals $J$ and $I$.  Thanks to Lemmas~\ref{lem:ccc} and~\ref{lemma:monotonicity-Ustar} it is easy to check that 
\[
J(\phi_{k+\frac{1}{2}})\leq I((\phi_{k+\frac{1}{2}})^c), \quad I(\psi_{k+1})\leq J((\psi_{k+1})^{\bar{c}}).
\]
Thus, we see that the alternating steps where we switch between the $\phi$ and $\psi$ variables can only increase the values of the dual problems.  To show that the gradient steps $\phi_{k+\frac{1}{2}} = \phi_{k} +  \nabla_{\!H} J(\phi_{k})$ and $\psi_{k+1} = \psi_{k+\frac{1}{2}} +  \nabla_{\!H}I(\psi_{k+\frac{1}{2}})$ increase the values of $J$ and $I$ respectively requires a more detailed analysis, which will be the main focus of Section~\ref{ssec:h1}.  As we shall see, the enhanced stability provided by the $H^1$ preconditioner $(\Theta_1\id-\Theta_2\Delta)^{-1}$ will be essential to ensure that the gradient steps have the ascent property.

Once the dual problems $I$ and $J$ have been solved to sufficient accuracy, one can recover the optimal density $\rho_*$ in (\ref{eq:got_main}) through the duality relations in (\ref{eq:duality_relations}).  In certain examples, such as incompressible flows, the subdifferential $\delta U^*$ may be multivalued.  When this happens, the relation $\rho_*\in \delta U^*(\phi_*)$ does not uniquely define $\rho_*$.   However, in practice, $\delta U^*$ is typically only multivalued on a single level set of $\phi_*$ which has zero measure.   As a result, for numerical purposes, we can simply identify $\rho_*=\delta U^*(\phi_*)$.   Note that it is advantageous to recover $\rho_*$ in this way as opposed to the pushforward relation $\rho_*=T_{\phi_*\,\#}\mu$.  Indeed, the formula $\rho_*=T_{\phi_*\,\#}\mu$ requires the computation of numerical derivatives of $\phi_*$, while the duality relation $\rho_*\in \delta U^*(\phi_*)$ is derivative free.  

Combining our work, we obtain an algorithm for evolving the JKO scheme. 
\begin{algorithm}[h]
\SetAlgoLined
    Given initial data $\rho^{(0)}$, initialize $\phi^{(0)}=\delta U(\rho^{(0)})$.\\
    \For{ $n=0,\dots,N$}{
    $\phi^{(n+1)}\gets $Run Algorithm~\ref{algo:got}  with $\mu=\rho^{(n)}$ and $\phi_0=\phi^{(n)}$.\\
    $\rho^{(n+1)} = \delta U^*(\phi^{(n+1)})$.
    }
 \caption{Running the JKO scheme}
  \label{algo:jko}
\end{algorithm}

\subsection{\texorpdfstring{$H^1$}{H1} gradient ascent}\label{ssec:h1}

In order to ensure stability of the gradient ascent steps, the gradients of $I$ and $J$ are computed in a metric based on the $H^1$ Sobolev norm. Given two constants $\Theta_1>0$, $\Theta_2>0$, we define the Hilbert norm $H$ by
\begin{equation} \label{eq:defH}
\norm{h}_{H}^2 = \int_\Om \Theta_2\abs{\nabla h(x)}^2 + \Theta_1\abs{h(x)}^2\,dx.
\end{equation}
The main steps of the back-and-forth scheme are the gradient ascent steps in the in the $H$ norm
\[
\phi_{k+\frac{1}{2}} = \phi_{k} +  \nabla_{\!H} J(\phi_{k})
\]
and
\[
\psi_{k+1} = \psi_{k+\frac{1}{2}} +  \nabla_{\!H} I(\psi_{k+\frac{1}{2}}).
\]
In order to obtain convergence of our method, we want these steps to increase the values of the concave functionals $J$ and $I$ respectively. The so-called gradient ascent property
\begin{align*}
    J(\phi_{k+\frac 1 2}) - J(\phi_k) & \ge \frac 1 2\norm{\nabla_{\!H}J(\phi_k)}_H^2,\\
    I(\psi_{k+1}) - I(\psi_{k+\frac 1 2}) & \ge \frac 1 2\norm{\nabla_{\!H} I(\psi_{k+\frac 1 2})}_H^2,
\end{align*}
can be obtained when the Hessian bounds
\begin{equation}\label{eq:1-smooth} 
\begin{aligned}
-\delta^2\!J(\phi)(h,h) \le \norm{h}^2_{H}, \\
-\delta^2\!I(\psi)(h,h) \le \norm{h}^2_{H}
\end{aligned}
\end{equation}
are  satisfied (c.f. Theorem~\ref{thm:optim-gradient-ascent} in Section~\ref{subsection:gradient-ascent}). When~\eqref{eq:1-smooth} holds, $I$ and $J$ are said to be ``$1$-smooth'' with respect to $H$.

We shall devote the rest of this subsection to obtaining inequalities of the form~\eqref{eq:1-smooth}. Specifically, we shall show how to choose the constants $\Theta_1$ and $\Theta_2$ in equation (\ref{eq:defH}) to ensure that $I$ and $J$ are 1-smooth (under regularity assumptions on $\phi$ and $\psi$) when $U$ has the form
\begin{equation} \label{eq:convex-U}
U(\rho)=\int_\Om u_m(\rho(x))\,dx + \int_\Om V(x)\rho(x)\,dx,
\end{equation}
where $u_m$ is defined in (\ref{eq:u_m}) and $V\colon\Omega\to[0,+\infty]$ is some given function.  

Crucially, we will give upper bounds on $\Theta_1$ and $\Theta_2$ that can be efficiently computed from the data.  
Obtaining tight bounds for $\Theta_1$ and $\Theta_2$ is important as they essentially control the step size of the algorithm (note that small values of $\Theta_1$ and $\Theta_2$ correspond to large gradient steps).  As we explained in Section~\ref{subsection:gradient-ascent}, it is optimal to choose the smallest values of $\Theta_1$ and $\Theta_2$ such that (\ref{eq:1-smooth}) holds.  This analysis is actually practical, as our numerical experiments confirm that the convergence of BFM can be substantially accelerated by making good choices for $\Theta_1$ and $\Theta_2$.

Those who are interested in the analysis of these bounds can continue reading this section, otherwise, one can immediately jump to the statements of Theorems~\ref{thm:J-smoothness} and~\ref{thm:I-smoothness},  which give approximately optimal values of $\Theta_1$ and $\Theta_2$ for the functionals $I$ and $J$.

\subsubsection{Hessian bound analysis}
It turns out that the Hessian bound analysis is nearly identical for $I$ and $J$. Therefore, we will primarily focus on the analysis for $J$, and we will later explain  how to deal with $I$ in a similar fashion. 
To obtain Hessian bounds on $J(\phi)=\int_\Om\phi^c\mu - U^*(\phi)$, we first derive bounds on the $c$-transform term 
\begin{equation} \label{eq:defF}
F(\phi):=\int_\Om\phi^c(x)\,\mu(x)dx,
\end{equation}
and then on the internal energy term $U^*(\phi)$. 
Let us begin by providing an expression for $\delta^2 F(\phi)$, the Hessian of $F$ at a point $\phi$ that is $c$-convex.

\begin{lemma}[Hessian bounds on the $c$-transform] \label{lemma:hessian-F}
Let $F$ be the functional defined in \eqref{eq:defF}. If $\phi$ is a $c$-convex function, then 
the Hessian of $F$ at $\phi$ can be written as
\begin{equation*}\delta^2F(\phi)(h,h) = -\tau\, \int_\Om \nabla h(y)\cdot \cof(I_{d\times d}+\tau D^2\phi(y))\nabla h(y)\, \mu(y+\tau\nabla\phi(y))\,dy,\end{equation*}
where $\cof(I_{d\times d}+\tau D^2\phi(y))$ denotes the cofactor matrix of $I_{d\times d}+\tau D^2\phi(y)$.
Furthermore, if the eigenvalues of $I_{d\times d}+\tau D^2\phi(y)$ are bounded above by some constant $\Lambda$ for every $y\in\Omega$, then we have the bound
\begin{equation} \label{eq:upperboundF}
-\delta^2F(\phi)(h,h) \leq \tau \normlinf{\mu} \Lambda^{d-1} \normltwo{\nabla h}^2.
\end{equation}

\end{lemma}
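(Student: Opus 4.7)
\medskip
\noindent\textbf{Proof proposal.} The plan is to compute $\delta^2 F(\phi)(h,h)$ by differentiating the first variation formula of Proposition~\ref{prop:c_transform_variation} twice: once implicitly, using that $T_\phi$ is characterized by a first-order optimality condition, and then reorganizing the resulting integral by a change of variables.

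Starting from $\delta F(\phi)(h)=\int_\Om h(T_\phi(x))\,\mu(x)\,dx$ (which follows from~\eqref{eq:c_variation}), I would perturb $\phi\mapsto\phi+th$ and compute
\[
\delta^2 F(\phi)(h,h)=\frac{d}{dt}\bigg|_{t=0}\int_\Om h\bigl(T_{\phi+th}(x)\bigr)\,\mu(x)\,dx
=\int_\Om \nabla h(T_\phi(x))\cdot \dot y(x)\,\mu(x)\,dx,
\]
where $\dot y(x)=\partial_t|_{t=0}T_{\phi+th}(x)$. To identify $\dot y$, use the first-order optimality relation defining $T_{\phi+th}(x)=y_t$, namely $\nabla(\phi+th)(y_t)=(x-y_t)/\tau$. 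Differentiating in $t$ at $t=0$ and solving for $\dot y$ yields
\[
\dot y(x)=-\tau\,\bigl(I_{d\times d}+\tau D^2\phi(y_0)\bigr)^{-1}\nabla h(y_0),\qquad y_0=T_\phi(x).
\]
Plugging this back in gives
\[
\delta^2 F(\phi)(h,h)=-\tau\int_\Om \nabla h(T_\phi(x))\cdot\bigl(I+\tau D^2\phi(T_\phi(x))\bigr)^{-1}\nabla h(T_\phi(x))\,\mu(x)\,dx.
\]

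Next, I would perform the change of variables $y=T_\phi(x)$. Since $(\phi,\phi^c)$ is $c$-conjugate, Proposition~\ref{prop:c_transform_variation} gives $x=S_{\phi^c}(y)=y+\tau\nabla\phi(y)$, so $dx=\det(I+\tau D^2\phi(y))\,dy$. This transforms the integral into
\[
\delta^2 F(\phi)(h,h)=-\tau\int_\Om \nabla h(y)\cdot\bigl(I+\tau D^2\phi(y)\bigr)^{-1}\nabla h(y)\,\mu(y+\tau\nabla\phi(y))\,\det(I+\tau D^2\phi(y))\,dy.
\]
Since $I+\tau D^2\phi(y)$ is symmetric, the identity $\det(A)\,A^{-1}=\cof(A)$ collapses the two factors into $\cof(I+\tau D^2\phi(y))$, yielding the stated formula.

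For the upper bound, I would use that $c$-convexity of $\phi$ is equivalent to $\phi+|\cdot|^2/(2\tau)$ being convex, hence $I+\tau D^2\phi\ge 0$ (in the a.e.\ sense guaranteed by Alexandrov's theorem). Thus the eigenvalues $\lambda_1,\dots,\lambda_d$ of $I+\tau D^2\phi(y)$ lie in $[0,\Lambda]$, and the eigenvalues of $\cof(I+\tau D^2\phi(y))$ are the products $\prod_{i\ne j}\lambda_i$, each bounded by $\Lambda^{d-1}$. Combining this with $\mu(y+\tau\nabla\phi(y))\le\normlinf{\mu}$ gives~\eqref{eq:upperboundF}.

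The main technical subtlety is the implicit differentiation of $T_{\phi+th}$, which strictly speaking requires some regularity on $\phi$ (say $C^2$, or $D^2\phi$ in the sense of Alexandrov together with a non-degeneracy assumption). The statement is phrased assuming enough regularity for these manipulations to be valid; otherwise the formula should be read in a distributional/a.e.\ sense, and the final inequality, which only involves $\int|\nabla h|^2$ times a scalar bound, extends by density to all $h\in H^1(\Om)$.
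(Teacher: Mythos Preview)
Your proof is correct and follows essentially the same route as the paper's: compute $\delta F(\phi)(h)=\int_\Om h(T_\phi(x))\,\mu(x)\,dx$, differentiate $T_\phi$ with respect to $\phi$, then change variables $y=T_\phi(x)$ and use $\det(A)A^{-1}=\cof(A)$ together with the eigenvalue bound on the cofactor. The only cosmetic difference is in how the variation of $T_\phi$ is obtained: the paper writes $T_\phi(x)=x-\tau\nabla\phi^c(x)$ and differentiates $\phi^c$ via Proposition~\ref{prop:c_transform_variation}, whereas you implicitly differentiate the first-order condition $\nabla\phi(T_\phi(x))=(x-T_\phi(x))/\tau$; the two computations yield the same $\dot y=-\tau(I+\tau D^2\phi(y_0))^{-1}\nabla h(y_0)$.
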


The proof of Lemma~\ref{lemma:hessian-F} can be found in the appendix. To gain some insight into the bound~\eqref{eq:upperboundF}, note that given a positive definite symmetric matrix $M\in \R^{d\times d}$ with eigenvalues $\{\lambda_1,\ldots, \lambda_d\}$, the eigenvalues of $\cof(M)$ are $\{\frac{\det(M)}{\lambda_1},\ldots, \frac{\det(M)}{\lambda_d}\}$.  This produces the $d-1$ degree scaling of $\Lambda^{d-1}$.  To understand the meaning of $\Lambda$ itself better,   recall that the optimal primal variable $\rho_*$ is given by $T_{\phi_*\,\#}\mu=\mu(y+\tau\nabla \phi_*(y))\det(I_{d\times d}+\tau D^2\phi(y)).$  Hence, the eigenvalues of $I_{d\times d}+\tau D^2\phi$ roughly measure how concentrated the mass of $\rho_*$ is compared to $\mu$.  Since one expects the difference between $\rho_*$ and $\mu$ to be on the order of $\tau$, it is reasonable to expect that $\Lambda$ will be close to 1.

We now turn our attention to bounding the Hessian of the internal energy term $U^*(\phi)$. 
When $U$ takes the form~\eqref{eq:convex-U}, its convex conjugate can be written as
\begin{equation*}
U^*(\phi)=\int_\Om u^*_m(\phi(x)-V(x))\,dx,
\end{equation*}
where 
\[
u^*_m(p)=\gamma^{-\frac{1}{m-1}}\Big(\frac{(m-1)p+\gamma}{m}\Big)_+^{\frac{m}{m-1}}
\]
and $(\cdot)_+=\max(\cdot, 0)$.
Now it is clear that the Hessian of $U^*$ is given by
\begin{equation} \label{eq:Hessian-Ustar}
    \delta^2U^*(\phi)(h,h)=\int_\Om (u_m^*)''\big(\phi(x)-V(x)\big)\abs{h(x)}^2\,dx.    
\end{equation}

When $1\leq m\leq 2$, the bounds are straightforward as $(u_m^*)''(p)$ is increasing with respect to $p$.  Hence, in this case, we have
\[
\delta^2 U^*(\phi)(h,h)=\int_{\Omega} (u_m^*)''(\phi(x)-V(x))|h(x)|^2\, dx\leq B\norm{h}_{L^2(\Omega)}^2,
\]
where $B=\sup_{x\in\Omega} (u_m^*)''(\phi(x)-V(x))$.  It was shown in~\cite{jacobskimtongL1} that the maximizer $\phi_*$ of $J$ obeys a maximum type principle in the sense that
\[
\phi_*(x) \le M:=\sup_{x\in\Om} \delta U(\mu)(x).
\]
It is therefore natural to assume that $\phi$ will be bounded above by $M$ throughout the algorithm (the gradient steps tend to diffuse pressure in the regions of highest concentration). Assuming $V(x)\geq 0$ everywhere, it now follows that 
\[
\delta^2 U^*(\phi)(h,h)\leq (u_m^*)''(M)\norm{h}_{L^2(\Omega)}^2.
\]

The aforementioned maximum principle on the pressure, $\phi(x)\le M$, can be used again to write the upper bound in terms of density instead of pressure. Indeed note that
\[
\rho(x)=(u_m^*)'(\phi(x)-V(x))\le (u_m^*)'(\phi(x)) \le (u_m^*)'(M).
\]
Therefore the quantity
\begin{equation} \label{eq:def-rhomax}
    \rhomax := (u_m^*)'(M)
\end{equation}
acts a natural upper bound on the densities. Furthermore writing $(u_m^*)''(M)=(u_m^*)''\big(u_m'(\rhomax)\big)=u_m''(\rhomax)^{-1}$, we obtain
\[
\delta^2 U^*(\phi)(h,h)\leq u_m''(\rhomax)^{-1}\norm{h}_{L^2(\Omega)}^2.
\]

The case $m>2$ is substantially more complicated.
When $m>2$,  $(u^*_m)''$ is singular at zero.  Hence, the integrand may be unbounded near points where $\phi(x)=V(x)$.  In this case, it may not be possible to bound (\ref{eq:Hessian-Ustar}) in terms of the $L^2$ norm of $h$.  To understand this better, let us focus on the most difficult model we consider in this paper: the incompressible limit $m\to\infty$. When $m\to\infty$, the energy $u_m$ encodes a hard ceiling constraint on the density values, i.e. 
\[
u_{\infty}(\rho)=
\begin{cases}
0& \textrm{if} \quad 0\leq \rho\leq 1,\\
+\infty &\textrm{otherwise}.
\end{cases}
\]
Hence, the dual energy $u^*_{\infty}$ is given by
\[
u^*_{\infty}(p)=
\begin{cases}
0\quad\text{if } p<0\\
p\quad\text{if } p\ge 0.\\
\end{cases}
\]
We pause here to point out that $u^*_{\infty}$ has much better regularity than $u_{\infty}$, for instance  $u^*_{\infty}$ is continuous over $\R$ while $u_{\infty}$ is discontinuous at $0$ and $1$. This illustrates once more the advantage of working with dual quantities. Nevertheless, $u^*_{\infty}$ is clearly not smooth in the convex sense, as there is a jump of derivative at $0$. In fact, we have $(u^*_{\infty})''=\delta_0$, where $\delta_0$ denotes the Dirac delta function at $0$. 

Luckily,  even though $U^*$ is built from $u^*_{\infty}$ which is not smooth, it is possible to bound the Hessian of $U^*$ as long as the singularity only occurs on a small set. Indeed, if we make the assumption that $|\nabla\phi(x)-\nabla V(x)|$ stays away from zero on the surface $\{\phi=V\}$, i.e. there exists a constant $\Gamma_0>0$ such that
\[
\sup_{\{x\in\Omega: \phi(x)=V(x)\}}\frac{1}{|\nabla \phi(x)-\nabla V(x)|}\leq \Gamma_0
\]
(note this is a quantitative way of saying that $\{\phi=V\}$ is a lower dimensional set), then we can use the coarea formula to rewrite equation~\eqref{eq:Hessian-Ustar} as
\begin{equation}\label{eq:coarea_bound}
\begin{aligned}
\delta^2U^*(\phi)(h,h) &=\int_{\R} (u_{\infty}^*)''(\alpha)\int_{\{x\in \Omega: \phi(x)-V(x)=\alpha\}}\frac{|h(x)|^2}{|\nabla \phi(x)-\nabla V(x)|}\, ds(x)\,d\alpha\\
&=\int_{\{\phi=V\}} \frac{\abs{h(x)}^2}{\abs{\nabla \phi(x)-\nabla V(x)}}\,ds(x) \\ &\leq \Gamma_0\int_{\set{\phi=V}} \abs{h(x)}^2\, ds(x),
\end{aligned}
\end{equation}
where $ds$ is the usual surface measure.  Due to the fact that the integration occurs over a surface, we cannot bound the right hand side of (\ref{eq:coarea_bound}) in terms of $\norm{h}_{L^2}$. However, we can use \emph{trace inequalities} from PDE theory to bound surface integrals by volume integrals involving a higher derivative~\cite{evans_book} (this can be essentially viewed as an inequality version of Stokes' Theorem). More precisely, there exist constants $C_1, C_2$ depending on the surface $\{\phi=V\}$, but independent of $h$ such that
\begin{equation*} 
    \int_{\{\phi=V\}} \abs{h(x)}^2\,ds(x) \le  C_2\norm{\nabla h}_{L^2(\Omega)}^2+C_1\norm{h}_{L^2(\Omega)}^2.   
\end{equation*}
From there we can immediately deduce that $U^*$ is $H$-smooth, since
\begin{equation*}
    \Gamma_0\int_{\{\phi=V\}} \abs{h(x)}^2\,ds(x)\leq \norm{h}_{H}^2    
\end{equation*}
as long as we choose $\Theta_i \geq C_i\Gamma_0$, $i=1,2$.

Now that we have seen how to obtain Hessian bounds in the most singular case $m\to\infty$, we are ready to return to the case $2<m<\infty$.  Note that in this case, $(u_m^*)''(p)$ is zero if $p<0$, singular at zero, and decreasing for $p> 0$.  Hence, if we choose some value $\lambda>0$ and let \[
A_{\lambda}=\{x\in\Omega: 0\leq \phi(x)-V(x)\leq \lambda\},
\]then we immediately have the bound
\[
\delta^2 U^*(\phi)(h,h)\leq (u_m^*)''(\lambda)\norm{h}_{L^2(\Omega)}^2+\int_{A_{\lambda}} (u_m^*)''(\phi(x)-V(x))|h(x)|^2\, dx.
\]
To estimate the second term, we proceed along the same lines as the case $m=\infty$. For any $\alpha\in\R$ let $\{\phi-V=\alpha\}=\set{x\in\Omega: \phi(x)-V(x)=\alpha}$.
As long as we have a constant $\Gamma_{\lambda}$ and trace inequality constants $C_1(\alpha), C_2(\alpha)$ such that
\begin{equation}\label{eq:def-Gamma}
\sup_{x\in A_{\lambda}} \frac{1}{ \abs{\nabla \phi(x)-\nabla V(x)}}\leq \Gamma_{\lambda} 
\end{equation}
and 
\begin{equation} \label{eq:trace_constants}
     \int_{\{\phi-V=\alpha\}} |h(x)|^2\, d s(x)\leq C_2(\alpha)\norm{\nabla h}_{L^2(\Omega)}^2  +C_1(\alpha)\norm{h}_{L^2(\Omega)}^2, 
\end{equation}
then we can replicate the argument from above.
 Combining the coarea formula and trace inequality, we get the following string of inequalities
\[
\int_{A_{\lambda}} (u_m^*)''(\phi(x)-V(x))|h(x)|^2\, dx
\]
\[
\leq \Gamma_{\lambda}\int_0^{\lambda} (u_m^*)''(\alpha)\int_{\{\phi-V=\alpha\}} |h(x)|^2\, ds(x)\, d\alpha
\]

\[
\leq(u^*_m)'(\lambda)\Gamma_{\lambda}\Big( C_{2,\lambda}\norm{\nabla h}_{L^2(\Omega)}^2  +C_{1,\lambda}\norm{h}_{L^2(\Omega)}^2  \Big),
\]
where 
\begin{equation}\label{eq:c_i_def}
C_{i,\lambda}=\max_{0\leq\alpha\leq \lambda} C_i(\alpha).
\end{equation}
Thus, $-\delta^2 U^*(h,h)$ is bounded by $\norm{h}_H^2$ as long as we choose 
\[
\Theta_1\geq (u^*_m)''(\lambda)+(u^*_m)'(\lambda)\,\Gamma_{\lambda}\,C_{1,\lambda}
\]
and 
\[
\Theta_2\geq (u^*_m)'(\lambda)\,\Gamma_{\lambda}\,C_{2,\lambda}
\]
where we have the freedom to choose the precise value of $\lambda.$

Our above computations are now summarized in the following lemma.
\begin{lemma}[Hessian bound on the internal energy] \label{lemma:hessian-Ustar}
    Define $\rhomax$, $\Gamma_{\lambda}$ and $C_{i,\lambda}$ by~\eqref{eq:def-rhomax},~\eqref{eq:def-Gamma} and~\eqref{eq:c_i_def}. 
    \begin{enumerate}[1.]
        \item Case $1\le m\le 2$. We have
        \[
            \delta^2U^*(\phi)(h,h) \le \frac{1}{\gamma m}(\rhomax)^{2-m} \normltwo{h}^2.
        \]
        
        \item Case $2<m<\infty$. For any $\lambda>0$,
        \begin{multline*}
            \delta^2U^*(\phi)(h,h) \le (\gamma m')^{1-m'} C_{2,\lambda}\,\Gamma_{\lambda}\,\normltwo{\nabla h}^2 + \\
            (\gamma m')^{1-m'} \Big(C_{1,\lambda}\,\Gamma_{\lambda}\lambda^{m'-1}+(m'-1)\lambda^{m'-2}\Big) \normltwo{h}^2,
        \end{multline*}
        where $m'=\frac{m}{m-1}$.
        
        \item Case $m=\infty$. We have
        \[
            \delta^2U^*(\phi)(h,h) \le C_{2,0}\,\Gamma_0\normltwo{\nabla h}^2+C_{1,0}\,\Gamma_0\normltwo{ h}^2.
        \]
    \end{enumerate}
    
\end{lemma}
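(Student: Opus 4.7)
The plan is to start from the explicit Hessian formula~\eqref{eq:Hessian-Ustar}, so the task reduces to bounding the scalar integral $\int_\Omega (u_m^*)''(\phi - V)\, |h|^2\, dx$. A direct differentiation of the closed form of $u_m^*$ gives
\[
(u_m^*)''(p) \;=\; \frac{1}{m}\gamma^{-(m'-1)}\!\left(\frac{(m-1)p+\gamma}{m}\right)_+^{m'-2},
\]
with $m' = m/(m-1)$; the three cases of the lemma correspond exactly to the sign of $m'-2$ (equivalently the position of $m$ relative to $2$).

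For Case 1 ($1 \le m \le 2$), the map $p \mapsto (u_m^*)''(p)$ is non-decreasing. I would invoke the pressure maximum principle $\phi \le M$ from~\cite{jacobskimtongL1} (which is preserved along the iterates) and combine it with $V \ge 0$ to get $\phi - V \le M$ pointwise. Monotonicity then yields $(u_m^*)''(\phi - V) \le (u_m^*)''(M)$, and the Legendre identity $(u_m^*)''(u_m'(\rho)) = 1/u_m''(\rho)$ together with $\rhomax = (u_m^*)'(M)$ rewrites this upper bound as $\rhomax^{2-m}/(\gamma m)$. Pulling this constant out of the integral yields the first claim. Case 3 ($m = \infty$) has essentially been derived in display~\eqref{eq:coarea_bound}: since $(u_\infty^*)'' = \delta_0$, the volume integral localizes to a surface integral over $\{\phi = V\}$, and combining $1/|\nabla(\phi - V)| \le \Gamma_0$ with the trace inequality for this surface produces the stated bound directly.

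The main work will be Case 2 ($2 < m < \infty$), where $(u_m^*)''$ is decreasing on $(0, \infty)$ and singular at the origin. I would split $\Omega$ into $\{\phi - V < 0\}$ (which contributes nothing), $\{\phi - V > \lambda\}$, and $A_\lambda = \{0 \le \phi - V \le \lambda\}$. On $\{\phi - V > \lambda\}$, monotonicity gives $(u_m^*)''(\phi - V) \le (u_m^*)''(\lambda)$, and a short calculation exploiting $((m-1)\lambda + \gamma)/m \ge (m-1)\lambda/m$ together with $m'-2<0$ produces $(u_m^*)''(\lambda) \le (m'-1)(\gamma m')^{1-m'}\lambda^{m'-2}$; this supplies the extra $\|h\|_{L^2}^2$ contribution appearing in the lemma. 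On $A_\lambda$ I would apply the coarea formula using the hypothesis $|\nabla(\phi - V)|^{-1} \le \Gamma_\lambda$ from~\eqref{eq:def-Gamma}, invoke the trace inequality~\eqref{eq:trace_constants} with the uniform constants $C_{i,\lambda}$ from~\eqref{eq:c_i_def}, and integrate the remaining factor $(u_m^*)''(\alpha)$ in $\alpha$ from $0$ to $\lambda$. The telescoping quantity $(u_m^*)'(\lambda) - (u_m^*)'(0)$ can then be bounded by $(\gamma m')^{1-m'}\lambda^{m'-1}$ via the subadditivity of the concave map $q \mapsto q^{m'-1}$. Reassembling the two contributions yields the claimed $H$-bound.

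The most delicate point will be the uniform trace inequality on $A_\lambda$: although the hypothesis $|\nabla(\phi - V)| \ge 1/\Gamma_\lambda$ on $A_\lambda$ ensures via the implicit function theorem that each level set $\{\phi - V = \alpha\}$ is a smooth hypersurface for $\alpha \in [0, \lambda]$, some care is needed to confirm that the trace constants $C_i(\alpha)$ depend continuously on $\alpha$ so that the supremum in~\eqref{eq:c_i_def} is finite and the coarea integration against $(u_m^*)''$ is legitimate.
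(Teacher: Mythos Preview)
Your proposal is correct and follows essentially the same line as the paper: the same three-case split governed by the sign of $m'-2$, the pressure maximum principle combined with monotonicity for $1\le m\le 2$, the coarea-plus-trace reduction for $m=\infty$, and the $A_\lambda$ decomposition for $2<m<\infty$. Your subadditivity argument for bounding $(u_m^*)'(\lambda)-(u_m^*)'(0)$ is in fact slightly sharper than the paper's cruder bound by $(u_m^*)'(\lambda)$ and is what is actually needed to land on the stated constants; the technical concern you raise about finiteness of $C_{i,\lambda}$ is a standing hypothesis that the paper also leaves implicit.
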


Combining Lemma~\ref{lemma:hessian-F} and~\ref{lemma:hessian-Ustar}  we directly obtain the main theorem of this section.

\begin{theorem}[$1$-smoothness of $J$] \label{thm:J-smoothness}
Let $1\le m\le \infty$ and $U(\rho)=\int_{\Om} u_m(\rho(x))+V(x)\rho(x)\,dx$, where $u_m$ is defined by~\eqref{eq:u_m}. Then $J(\phi):=\int_\Om \phi^c(x)\,\mu(x)dx-U^*(\phi)$ satisfies the Hessian bound
\begin{equation*}
-\delta^2\!J(\phi)(h,h) \le \Theta_2\normltwo{\nabla h}^2 + \Theta_1 \normltwo{h}^2,
\end{equation*}
where $\Theta_1$ and $\Theta_2>0$ are given by the table below. 

As in Lemma~\ref{lemma:hessian-F}, $\Lambda$ is an upper bound on the eigenvalues of $I_{d\times d}+\tau D^2\phi(y)$ uniformly in $y$. Additionally $\lambda>0$ is a parameter to choose and $\rhomax$, $\Gamma_{\lambda}$ and $C_{i,\lambda}$ are defined by~\eqref{eq:def-rhomax},~\eqref{eq:def-Gamma} and~\eqref{eq:c_i_def}.

\begin{center}
\renewcommand{\arraystretch}{1}
\begin{tabular}{|c|c|c|}
\hline
$m$ & $\Theta_1$ & $\Theta_2$ \\
\hline
\rule{0pt}{4ex}
$m=1$ & $\displaystyle\frac{\rhomax}{\gamma}$ & $\tau \Lambda^{d-1}\normlinf{\mu}$ \\[2ex]
\hline
\rule{0pt}{4ex}
$1<m<2$ & $\displaystyle\frac{\rhomax^{\ \ \ \ 2-m}}{\gamma m}$ & $\tau \Lambda^{d-1}\normlinf{\mu}$ \\[2ex]
\hline
\rule{0pt}{4ex}
$m=2$ & $\displaystyle\frac{1}{2\gamma}$ & $\tau \Lambda^{d-1}\normlinf{\mu}$\\[2ex]
\hline
\rule{0pt}{4ex}
$m>2$& $\displaystyle(\gamma m')^{1-m'} \Big(\lambda^{m'-1}C_{1,\lambda}\,\Gamma_{\lambda}+\frac{m'-1}{\lambda^{2-m'}}\Big)$ & $(\gamma m')^{1-m'} C_{2,\lambda}\,\Gamma_{\lambda} +\tau  \Lambda^{d-1}\normlinf{\mu}$ \\[2ex]
\hline
\rule{0pt}{4ex}
$m=\infty$ & $C_{1,0}\,\Gamma_{0}$ & $C_{2,0}\,\Gamma_{0} + \tau \Lambda^{d-1}\normlinf{\mu}$\\[2ex]
\hline
\end{tabular}
\end{center}

\end{theorem}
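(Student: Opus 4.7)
The plan is to observe that $J = F - U^*$ where $F(\phi) = \int_\Omega \phi^c(x)\,\mu(x)dx$, so that
\[
-\delta^2\!J(\phi)(h,h) = -\delta^2\!F(\phi)(h,h) + \delta^2\!U^*(\phi)(h,h).
\]
The two terms on the right have already been estimated separately: Lemma~\ref{lemma:hessian-F} yields
\[
-\delta^2\!F(\phi)(h,h) \le \tau\,\Lambda^{d-1}\,\normlinf{\mu}\,\normltwo{\nabla h}^2,
\]
contributing only to $\Theta_2$, and Lemma~\ref{lemma:hessian-Ustar} bounds $\delta^2\!U^*(\phi)(h,h)$ in each of the three regimes $1\le m\le 2$, $2<m<\infty$, $m=\infty$. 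Summing the two bounds identifies the coefficients of $\normltwo{\nabla h}^2$ and $\normltwo{h}^2$, which are precisely the $\Theta_2$ and $\Theta_1$ entries in the table.

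The only work beyond invoking the two lemmas is an algebraic check, row by row. For $1<m<2$ the Lemma gives $\Theta_1=\frac{1}{\gamma m}\rhomax^{2-m}$ and nothing new to $\Theta_2$, matching the table directly; the endpoints $m=1$ and $m=2$ are the substitutions $\rhomax^{2-m}=\rhomax$ and $\rhomax^{0}=1$ respectively. For $m>2$ one writes $(\gamma m')^{1-m'}$ with $m'=m/(m-1)$ and combines the coarea/trace contributions $(\gamma m')^{1-m'}C_{2,\lambda}\Gamma_\lambda$ (to $\Theta_2$) and $(\gamma m')^{1-m'}(C_{1,\lambda}\Gamma_\lambda \lambda^{m'-1}+(m'-1)\lambda^{m'-2})$ (to $\Theta_1$) with the $F$ contribution to $\Theta_2$. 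The $m=\infty$ row is the corresponding limit, with $\lambda=0$ and the hard-constraint choice $u_\infty^*(p)=(p)_+$, so $(u_\infty^*)''=\delta_0$ localizes the singular integral to the interface $\{\phi=V\}$ and the coefficients reduce to $C_{i,0}\Gamma_0$.

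There is essentially no conceptual obstacle remaining: the delicate analytical step — using the coarea formula together with a trace inequality to tame the singularity of $(u_m^*)''$ for $m>2$ — was already carried out in the proof of Lemma~\ref{lemma:hessian-Ustar}. The cases $m=1$ and $m=\infty$ need only the brief remark that the lemma's proof applies verbatim to the limiting energies $u_1(\rho)=\gamma\rho\log\rho$ and $u_\infty$, whose convex conjugates $u_1^*(p)=\gamma e^{p/\gamma-1}$ and $u_\infty^*(p)=(p)_+$ give $(u_1^*)''(p)=\rho/\gamma\le\rhomax/\gamma$ and $(u_\infty^*)''=\delta_0$, which feed into the same calculation. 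The hardest part of the whole proof is therefore purely notational: keeping the different parameters $\Lambda$, $\rhomax$, $\Gamma_\lambda$, $C_{i,\lambda}$ and $\lambda$ organized across the five rows of the table.
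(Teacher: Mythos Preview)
Your proposal is correct and matches the paper's own argument exactly: the paper states that Theorem~\ref{thm:J-smoothness} follows directly by combining Lemma~\ref{lemma:hessian-F} and Lemma~\ref{lemma:hessian-Ustar}, which is precisely the decomposition $-\delta^2 J = -\delta^2 F + \delta^2 U^*$ you describe. The row-by-row algebraic check you outline (including the limiting substitutions at $m=1$, $m=2$, and $m=\infty$) is all that remains once the two lemmas are in hand.
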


In order to use Theorem~\ref{thm:J-smoothness} in the case $m>2$, we need to be able to compute $\Gamma_{\lambda}$ and $C_{i,\lambda}$ and we need to choose a value for $\lambda$ when $m\in (2,\infty)$.  On a discrete grid with $n$ points, one can easily compute $\Gamma_{\lambda}$ for all $\lambda$ in $O(n)$ operations.  On the other hand, it requires $O(n)$ operations to compute $C_1(\alpha)$ and $C_2(\alpha)$ for a single value of $\alpha$ (c.f. Section 4.1).  Thus, for the case $m=\infty$, we can compute the constants explicitly in $O(n)$ operations.  The case $2<m<\infty$ is harder, since we cannot efficiently compute $C_{i,\lambda}=\max_{0\leq\alpha\leq\lambda}C_{i}(\alpha)$.  To overcome this difficulty, we typically choose $\lambda$ by minimizing 
\[
\lambda^*=\argmin_{\lambda\geq 0}\;\displaystyle(\gamma m')^{1-m'} \Big(\lambda^{m'-1}\Gamma_{\lambda}C_{1}(0)\,+\frac{m'-1}{\lambda^{2-m'}}\Big),
\]
which gives a reasonable estimate for the optimal choice of $\lambda$ to make $\Theta_1$ as small as possible.
We then estimate $\max_{0\leq \alpha\leq\lambda^*}C_i(\alpha)$ by simply taking the max over $C_i(0)$ and $C_i(\lambda^*)$, which appears to work well in practice. 


To conclude this discussion we turn our attention to the other functional $I$ for which a similar analysis can be made. First we define
\[
p(x) = (\psi^{\bar c}-V)(T_{\psi^{\bar c}}(x)).
\]
Next, for $\lambda>0$, we define
\begin{equation} \label{eq:def-tildeGamma}
    \tilde{\Gamma}_\lambda = \sup_{x : 0 \le p(x)\le \lambda} \frac{1}{\abs{\nabla p(x)}}.
\end{equation}
Finally, we define trace constants $\tilde C_{i}(\alpha)$ such that
\[
    \int_{\{p=\alpha\}} \abs{h(x)}^2\,ds(x) \le \tilde C_2(\alpha) \normltwo{\nabla h}^2 + \tilde C_1(\alpha)\normltwo{h}^2,
\]
and then set
\begin{equation} \label{eq:def-tildectr}
\tilde C_{i,\lambda}=\sup_{0\leq\alpha\leq \lambda} \tilde{C}_i(\alpha).
\end{equation}
Now we can state our result bounding the Hessian of $I$.
\begin{theorem}\label{thm:I-smoothness}
Let $I(\psi)=\int_\Om \psi(x)\,\mu(x)dx-U^*(\psi^{\bar c})$, with $U(\rho)=\int_\Om u_m(\rho(x))+V(x)\rho(x)\,dx$, $u_m$ is defined by~\eqref{eq:u_m} and $1\le m\le \infty$. The Hessian of $I$ can be written
\begin{multline*}
    -\delta^2 I(\psi)(h,h) = \delta^2U^*(\psi^c)(h\circ S_\psi,h\circ S_\psi) + \\
    \tau \int_\Om \nabla h(x)\cdot\cof(I_{d\times d}-\tau D^2\psi(x))\nabla h(x)\,\delta U^*(\psi^c)(x-\tau\nabla\psi(x))\,dx.
\end{multline*}
It satisfies the bound
\begin{equation*}
-\delta^2\!I(\psi)(h,h) \le \Theta_2\normltwo{\nabla h}^2 + \Theta_1 \normltwo{h}^2,
\end{equation*}
where $\Theta_1$ and $\Theta_2>0$ are given by the table below. Here $\Lambda$ is an upper bound on the eigenvalues of $I_{d\times d}-\tau D^2\psi(x)$ uniformly in $x$. 
Additionally  $\lambda>0$ is a parameter to choose and $\rhomax$ is defined by~\eqref{eq:def-rhomax}, $\tilde\Gamma_{\lambda}$ by~\eqref{eq:def-tildeGamma} and $\tilde C_{i,\lambda}$ by~\eqref{eq:def-tildectr}.

\begin{center}
\renewcommand{\arraystretch}{1}
\begin{tabular}{|c|c|c|}
\hline
$m$ & $\Theta_1$ & $\Theta_2$ \\
\hline
\rule{0pt}{4ex}
$m=1$ & $\displaystyle\frac{\Lambda^d\rhomax}{\gamma}$ & $\tau \Lambda^{d-1}\rhomax$ \\[2ex]
\hline
\rule{0pt}{4ex}
$1<m<2$ & $\displaystyle\frac{\Lambda^d(\rhomax)^{2-m}}{\gamma m }$ & $\tau \Lambda^{d-1}\rhomax$ \\[2ex]
\hline
\rule{0pt}{4ex}
$m=2$ & $\displaystyle\frac{\Lambda^d}{2\gamma}$ & $\tau \Lambda^{d-1}\rhomax$\\[2ex]
\hline
\rule{0pt}{4ex}
$m>2$& $\displaystyle \Lambda^d(\gamma m')^{1-m'} \Big(\tilde C_{1,\lambda}\,\tilde \Gamma_{\lambda}\lambda^{m'-1}+\frac{m'-1}{\lambda^{2-m'}}\Big)$ & $\Lambda^d(\gamma m')^{1-m'} \tilde C_{2,\lambda}\,\tilde \Gamma_{\lambda} +\tau  \Lambda^{d-1}\rhomax$ \\[2ex]
\hline
\rule{0pt}{4ex}
$m=\infty$ & $\Lambda^d \tilde C_{1,0}\,\tilde\Gamma_0$ & $\Lambda^d \tilde C_{2,0}\,\tilde\Gamma_0+\tau \Lambda^{d-1}\rhomax $\\[2ex]
\hline
\end{tabular}
\end{center}
\end{theorem}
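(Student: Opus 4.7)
The plan is to parallel the derivation of Theorem~\ref{thm:J-smoothness}, replacing the direct evaluation $U^*(\phi)$ by the composite $U^*(\psi^{\bar c})$, so that curvature contributes through both the second variation of $U^*$ and the variation of the map $S_\psi$. First the displayed formula for $-\delta^2 I(\psi)(h,h)$ is derived; then each of the two resulting terms is bounded separately.

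By Proposition~\ref{prop:c_transform_variation} the first variation is
\[
\delta I(\psi)(h) = \int_\Om h(x)\,\mu(x)\,dx - \int_\Om \delta U^*(\psi^{\bar c})(y)\,h(S_\psi(y))\,dy.
\]
Differentiating once more along $\psi+th$ produces two contributions. Applying the chain rule to $\delta U^*(\psi^{\bar c})$ together with the identity $\partial_t(\psi+th)^{\bar c}|_{t=0} = h\circ S_\psi$ coming from~\eqref{eq:c_bar_variation} yields the first term $\delta^2U^*(\psi^{\bar c})(h\circ S_\psi,h\circ S_\psi)$. The variation of $S_{\psi+th}(y) = y+\tau\nabla(\psi+th)^{\bar c}(y)$ gives $\partial_t S_{\psi+th}(y)|_{t=0} = \tau\nabla(h\circ S_\psi)(y)$, producing
\[
\tau\int_\Om \delta U^*(\psi^{\bar c})(y)\,\nabla h(S_\psi(y))\cdot\nabla(h\circ S_\psi)(y)\,dy.
\]
A change of variables $x = S_\psi(y)$, so that $y = x-\tau\nabla\psi(x)$ with $dy = \det(I_{d\times d}-\tau D^2\psi(x))\,dx$ and $DS_\psi = (I_{d\times d}-\tau D^2\psi)^{-1}$, combined with the identity $M^{-T}\det M = \cof M$, turns this into the stated cofactor integral. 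This is essentially the computation in the proof of Lemma~\ref{lemma:hessian-F} carried out for the inverse transport map.

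For the bounds the two terms are then handled independently. The cofactor integral is at most $\tau\Lambda^{d-1}\rhomax\normltwo{\nabla h}^2$, since the eigenvalues of $\cof(I_{d\times d}-\tau D^2\psi)$ are products of all-but-one of the eigenvalues of $I_{d\times d}-\tau D^2\psi$, and $\delta U^*(\psi^{\bar c})(x-\tau\nabla\psi(x))\le\rhomax$ by the pressure maximum principle used in the proof of Theorem~\ref{thm:J-smoothness}; this accounts for the $\tau\Lambda^{d-1}\rhomax$ contribution to $\Theta_2$ in every row. For the $\delta^2U^*$ term, starting from
\[
\int_\Om (u_m^*)''(\psi^{\bar c}(y)-V(y))\,|h(S_\psi(y))|^2\,dy
\]
and pushing forward through $x = S_\psi(y)$ gives $\int_\Om(u_m^*)''(p(x))|h(x)|^2\det(I_{d\times d}-\tau D^2\psi(x))\,dx$, where $p(x) = (\psi^{\bar c}-V)(T_{\psi^{\bar c}}(x))$. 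Bounding the Jacobian by $\Lambda^d$ and then replaying the analysis of Lemma~\ref{lemma:hessian-Ustar} verbatim in $x$-space, now with surfaces $\{p=\alpha\}$ and with the quantities $\tilde\Gamma_\lambda$, $\tilde C_{i,\lambda}$ playing the roles of $\Gamma_\lambda$, $C_{i,\lambda}$, produces the same estimates as in Lemma~\ref{lemma:hessian-Ustar} multiplied by $\Lambda^d$. Adding the two contributions row by row reproduces the table.

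The main obstacle is the surface-level change of variables in the singular cases $m>2$ and $m=\infty$: the coarea decomposition over $\{\psi^{\bar c}-V=\alpha\}$ in $y$-space must be transformed cleanly into one over $\{p=\alpha\}$ in $x$-space, tracking both the bulk Jacobian $\det(I_{d\times d}-\tau D^2\psi)$ and the reciprocal-gradient factor that distinguishes $\Gamma_\lambda$ from $\tilde\Gamma_\lambda$. Everything else is a direct adaptation of the arguments already used for $J$.
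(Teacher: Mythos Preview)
Your proposal is correct and follows essentially the same route as the paper: derive the two-term Hessian by differentiating $S_\psi$, change variables $x=S_\psi(y)$ to produce the cofactor integral and the $\Lambda^d$-weighted $(u_m^*)''(p(x))|h(x)|^2$ integral, then bound each term exactly as in Lemmas~\ref{lemma:hessian-F} and~\ref{lemma:hessian-Ustar} with the tilde quantities replacing $\Gamma_\lambda,C_{i,\lambda}$. One small remark: the concern in your last paragraph about a ``surface-level change of variables'' is unnecessary---the paper (and your own outline two paragraphs earlier) performs the bulk change of variables \emph{first} and only then applies the coarea formula directly in $x$-space to $p(x)$, so no transformation of level-set integrals between $y$- and $x$-space is ever needed.
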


\subsection{Back-and-forth for non-convex \texorpdfstring{$U$}{U}}\label{sec:nonconvex}

In this section, we will discuss how to extend our method when $U$ is not convex with respect to $\rho$.  The trick is to appeal to convexity splitting~\cite{eyre}, a well-known technique for simulating gradient flows with non-convex energies.    The idea behind convexity splitting is to write $U$ as a sum of a convex function and a concave function, i.e.
\[
U(\rho)=U_1(\rho)+U_0(\rho),
\]
where $U_1$ is convex and $U_0$ is concave.  Thanks to the concavity of $U_0$, given any fixed density $\bar{\rho}$, we have the inequality
\begin{equation}\label{eq:splitting_inequality}
U(\rho)\leq U_1(\rho)+U_0(\bar{\rho})+(\delta U_0(\bar{\rho}), \rho-\bar{\rho}).
\end{equation}
 Crucially, the right-hand-side of equation (\ref{eq:splitting_inequality}) is a convex function.     As such, if we replace the JKO scheme with the relaxed scheme
\begin{equation}\label{eq:relaxed_GOT}
\rho^{(n+1)}=\argmin_{\rho}\; U_1(\rho)+U_0(\rho^{(n)})+(\delta U_0(\rho^{(n)}), \rho-\rho^{(n)})+\frac{1}{2\tau}W_2^2(\rho, \rho^{(n)}),
\end{equation}
then we obtain a convex variational problem. 
The beauty of convexity splitting is that the relaxed scheme is still unconditionally energy stable. Combining (\ref{eq:splitting_inequality}) and (\ref{eq:relaxed_GOT}) we have the string of inequalities
\[
U(\rho^{(n+1)})+\frac{1}{2\tau}W_2^2(\rho^{(n+1)}, \rho^{(n)})\leq
\]
\[
U_1(\rho^{(n+1)})+U_0(\rho^{(n)})+(\delta U_0(\rho^{(n)}), \rho^{(n+1)}-\rho^{(n)})+\frac{1}{2\tau}W_2^2(\rho^{(n+1)}, \rho^{(n)})\leq 
\]
\[
\inf_{\rho}\; U_1(\rho)+U_0(\rho^{(n)})+(\delta U_0(\rho^{(n)}), \rho-\rho^{(n)})+\frac{1}{2\tau}W_2^2(\rho, \rho^{(n)}).
\]
By choosing $\rho=\rho^{(n)}$ in the last line, we can conclude that
\[
U(\rho^{(n+1)})+\frac{1}{2\tau}W_2^2(\rho^{(n+1)}, \rho^{(n)})\leq U(\rho^{(n)}).
\]
Thus, we see that the energy is still decreasing along the iterates of the relaxed scheme. 

 Now let us turn to solving the relaxed problem (\ref{eq:relaxed_GOT}).  Since the energy term in
 (\ref{eq:relaxed_GOT}) is convex, we can solve the problem using the dual approach outlined above. 
The twin dual problems associated to (\ref{eq:relaxed_GOT}), which we shall denote as $\tilde{J}$ and $\tilde{I}$, are given by 
\begin{equation}\label{eq:split_J}
    \tilde{J}(\phi):=\int_{\Omega} \phi^c(x)\,\rho^{(n)}(x)dx - \tilde{U}^*(\phi),
\end{equation}
\begin{equation}\label{eq:split_I}
    \tilde{I}(\psi):=\int_{\Omega}\psi(x)\,\rho^{(n)}(x)dx-\tilde{U}^*(\psi^{\bar{c}}),
\end{equation}
where 
\[
\tilde{U}^*(\phi):=U_1^*\big(\phi-\delta U_0(\rho^{(n)})\big)+(\delta U_0(\rho^{(n)}), \rho^{(n)})-U_0(\rho^{(n)})
\]
is the convex conjugate of $U_1(\rho)+U_0(\rho^{(n)})+(\delta U_0(\rho^{(n)}), \rho-\rho^{(n)})$.  We can then find the dual maximizers $(\phi^{(n+1)}, \psi^{(n+1)})$ of (\ref{eq:split_J}) and (\ref{eq:split_I}) using Algorithm~\ref{algo:got} along with the Hessian bounds developed in the previous subsection.   As before, one can recover the solution $\rho^{(n+1)}$ of (\ref{eq:relaxed_GOT}) through the duality relation $\rho^{(n+1)}=\delta \tilde{U}^*(\phi^{(n+1)})$.

\section{Numerical implementation and experiments}\label{section:experiments}

\subsection{Implementation details}
In this section, we use the back-and-forth method to numerically simulate equation (\ref{eq:pde})  for a wide variety of internal energies $U$.  Throughout this section we will assume that the domain $\Omega = [-1/2,1/2]^2$ is the unit square in $\mathbb{R}^2$, discretized using a regular rectangular grid.  The numerical simulations in this section were coded in C++ and were run on 2019 MacBook Pro with $2.6$ GHz 6-core and $16$ GB RAM.

\quad Following the approach in~\cite{jacobslegerbf}, we will compute the forward and backward $c$-transforms using the fast Legendre transform (FLT) algorithm~\cite{DBLP:journals/na/Lucet97}.  On a regular rectangular grid with $n$ points, the FLT algorithm can be used to compute either the forward or backward $c$-transform in $O(n)$ operations. See~\cite{jacobslegerbf} for more detail on the equivalence of the $c$-transform and the Legendre transform. 

When computing gradients with respect to the weighted norm (\ref{eq:def-metric}), we will need to solve a Poisson equation with zero Neumann boundary condition.  We will solve this equation numerically via the fast Fourier transform (FFT).  All FFTs were calculated using the free FFTW C++ library.

To compute the gradients of $I$ and $J$, we will also need to compute pushforwards.  Given a density $\mu$ and an invertible map $Z\colon\Omega\to\Omega$ we can compute the pushforward $Z_{\#}\mu$ via the Jacobian formula
\[
Z_{\#}\mu(x)=\frac{\mu\big( Z^{-1}(x)\big)}{|\det\big(DZ( Z^{-1}(x))\big)|}=\mu\big( Z^{-1}(x)\big)|\det\big(D(Z^{-1})(x)\big)|.
\]
In our case, we will only need to compute pushforwards with respect to the maps $T_{\phi}$ and $S_{\psi}$ that are induced by the forward and backward $c$-transforms respectively.  Thanks to the structure of BFM, we only need to compute $T_{\phi\,\#}\rho^{(n)}$ and $S_{\psi\,\#}\delta U^*(\psi^{\bar{c}})$ when $\phi$ and $\psi$ are $c$-convex and $c$-concave respectively.  As a result, we have the simple formulas $T_{\phi}^{-1}(y)=y+\tau\nabla \phi(y)$ and $S_{\psi}^{-1}(x)=x-\tau\nabla \psi(x)$.    Therefore, 
\[
    T_{\phi\,\#}\rho^{(n)}(y) =  \rho^{(n)}\big(y+\tau\nabla\phi(y)\big) \det\big(I_{d\times d}+\tau D^2\phi(y) \big),
\]
and
\[
    S_{ \psi \# } \delta U^*(\psi^{\bar{c}}) (x)=\Big( \delta U^*\big(\psi^{\bar{c}}\big)\circ \big(x-\tau\nabla\psi(x)\big)\Big) \det \big(I_{d\times d}-\tau D^2\psi(x)\big).
\]
When implementing our algorithm, we compute these quantities using a simple centered difference scheme. 

Finally, let us briefly explain how to compute the trace inequality constants $C_i(\alpha)$ defined in equation (\ref{eq:trace_constants}).  From Lemma~\ref{lem:trace_dist} and Corollary~\ref{cor:trace} in the Appendix, we see that $C_{i}(\alpha)$ can be computed from the solution $u$ to the Eikonal equation
\[
\begin{cases}
|\nabla u(x)|=1 &\textrm{if}\quad \phi(x)-V(x)\neq \alpha,\\
u(x)>0 &\textrm{if} \quad \phi(x)-V(x)<\alpha,\\
u(x)<0 &\textrm{if}\quad \phi(x)-V(x)>\alpha.\\
\end{cases}
\]
Note that 
\[
|u(x)|^2=\min_{\{y: \phi(y)-V(y)=\alpha\} } |x-y|^2,
\]
which is nothing but a $c$-transform of the indicator function 
\[
\chi_{\alpha}(y)=
\begin{cases}
0 & \textrm{if} \quad \phi(y)-V(y)=\alpha,\\
+\infty &\textrm{else}.\\
\end{cases}
\]
Therefore, $|u|^2$ can be computed in $O(n)$ operations using the Fast Legendre transform, and from there one can recover $u$.  Once one has $u$, it is straightforward to compute the constants in Corollary~\ref{cor:trace} in $O(n)$ operations.
\subsection{Experiments}
\quad We present four sets of numerical experiments. In the first set of experiments, we demonstrate the speed and accuracy of our method by comparing to the so-called Barrenblat solutions, a special case of equation (\ref{eq:pde}) where  closed-form solutions are available.
In the next set of experiments, we simulate the porous media equation $\partial_t \rho=\Delta (\rho^m)+\nabla \cdot (\rho \nabla V)$ for various interesting functions $V\colon\Omega\to\R\cup\{+\infty\}$ and values of $m$.   Note that if $V$ takes the value $+\infty$ on some closed set $E\subset \Omega$, then $\rho$ can never enter $E$.  Hence, this is equivalent to solving (\ref{eq:pde}) on the more complicated domain $\Omega\setminus E$.  
In the third set of experiments, we use the splitting scheme from Section~\ref{sec:nonconvex} to simulate (\ref{eq:pde}) when $U$ is nonconvex.   In this case, the non-convexity will come from an interaction energy of the form $\mathcal{W}(\rho)=\int_{\Omega}\int_{\Omega} W(x-y)\rho(x)\rho(y)\, dy\, dx $. 
Finally, in the last set of experiments, we study incompressible flows where $U$ encodes the hard constraint $\rho\leq 1$ everywhere.  In this case, the dual energy $U^*$ will have a very singular Hessian at the boundary of the support of $\rho$. Nonetheless, we are still able to simulate the evolution even on very fine grids.      

\subsubsection{Accuracy: Barenblatt solutions}
In this experiment, we use our back-and-forth algorithm to solve the PME, 
\begin{equation}
\partial_t \rho = \gamma\Delta(\rho^m),
\end{equation}
with the initial data
\[
    \rho(0,x) = M \delta_{0}(x).
\]
Here, $\gamma>0$ is a constant that controls the speed of the diffusion, $M>0$ is the total initial mass and $\delta_{0}$ is the standard Dirac distribution centered at zero.  When $m>1$, this equation is the Wasserstein gradient flow  of the energy $U(\rho)=\int_{\Omega} \frac{\gamma}{m-1}\rho(x)^m\, dx$.
Thanks to the simplicity of the initial data, on the domain $\R^2$ the equation has a closed form solution, known as the Barenblatt solution~\cite{barenblatt1996, barenblatt2003},
\begin{equation}\label{eq:barenblatt}
\rho(t,x) = \Biggl(
\left(\frac{M}{4\pi m t \gamma}\right)^{\frac{m-1}{m}}
- \frac{(m-1)}{4m^2 t \gamma} |x|^2
\Biggr)_+^{\frac{1}{m-1}},
\end{equation}
where $(\cdot)_+=\max(\cdot,0)$.
The Barenblatt solution is compactly supported, therefore, it agrees with the solution on the square $[-1/2, 1/2]^2$ up until the time $t_c=\frac{m-1}{16m^2\gamma}(\frac{\pi(m-1)  }{4mM})^{m-1} $ when the mass hits the boundary of the square.

Using the Barenblatt solution as a benchmark, we can test the accuracy and efficiency of our scheme.  We will simulate the equation for the exponents $m=2, 4, 6$.   Since the Dirac delta function is challenging to work with numerically, we shall instead fix a height $h_0>0$ and start the flow at a time $t_0>0$, where $t_0$ is chosen so that $\norm{\rho(t_0,\cdot)}_{L^{\infty}}=h_0$.   Note that the value of $t_0$ will depend on the exponent $m$, and can be found explicitly from equation (\ref{eq:barenblatt}).  In addition, we will only consider the flow within in the time interval $[t_0,t_c]$, since the Barenblatt solution is only valid on the unit square up to time $t_c$.

In all of our benchmark experiments, we shall set $M=0.5$,  $h_0=15$ and $\gamma=10^{-3}$.  Note that the small value of $\gamma$ is just a time rescaling to ensure that the flow occurs on a macroscopic time interval.    We will compute the evolution between the times $t_0 \leq t \leq 2+t_0$ with different step sizes $\tau = 0.4, 0.2, 0.1, 0.05, 0.025$  (one can check that with our parameter choices $t_0+2< t_c$ for $m=2,4,6$).  Running the experiments with various time step sizes allows us to verify that the scheme becomes more accurate as the time step is decreased.  We shall measure the accuracy of the solution using the $L^1$ norm, which is very natural in the context of Wasserstein gradient flows (see for instance~\cite{jacobskimtongL1}). The precise formula for our error estimate is
\begin{equation}\label{eq:l1_error}
\mathrm{error} = \frac{1}{N_{\tau}} \sum_{n=0}^{N_{\tau}} \int_\Om \abs{\rho(n\tau+t_0,x)-\rho^{(n)}(x)}\,dx,
\end{equation}
where $N_{\tau}=\lfloor\frac{2}{\tau}\rfloor$,  $\rho(n\tau+t_0,x)$ is the Barenblatt solution and $\rho^{(n)}$ is the $n^{th}$ JKO iterate starting from the initial data $\rho^{(0)}(x)=\rho(t_0,x)$.  When solving for $\phi^{(n+1)}$, we will run Algorithm~\ref{algo:got} until the residual $\norm{T_{\phi}-\delta U^*(\phi)}_{L^1(\Omega)}$ is less than $\epsilon= 10^{-3}$.

The results of these experiments are displayed in Table~\ref{table:barenblatt} and Figure~\ref{fig:comparison-m-246}.   Table~\ref{table:barenblatt} displays the error (\ref{eq:l1_error}) and the total computation time for all of the aforementioned experiments. In Figure ~\ref{fig:comparison-m-246}, we plot a cross section of our solutions and the exact solution at various time snapshots.  The cross section is taken along the horizontal line $\{(x_1, 0): x_1\in [-1/2,1/2]\}$.   One can see that as the time step is decreased, our solution is in excellent agreement with the exact solution for all exponents $m=2,4,6$.  Figure~\ref{fig:comparison-m-246} also shows that our method correctly captures the discontinuity of $\nabla \rho$ at the boundary of the support of $\rho$.  This is notable as most other numerical methods smooth out the discontinuity.  The reason that we are able to correctly capture the discontinuity is due to the fact that we recover the density through the duality relation $\rho^{(n+1)}=\delta U^*(\phi^{(n+1)})=\Big(\frac{m-1}{m\gamma}\max(\phi,0)\Big)^{\frac{1}{m-1}}$.  The function $s(x)=\max(x,0)^{\frac{1}{m-1}}$ has discontinuous derivatives at zero, therefore even when $\phi^{(n+1)}$ is smooth, $\nabla \rho$ will still have a discontinuity at the boundary of its support.

\begin{table}
\center{\caption{Barenblatt solution test case (grid size $512 \times 512$) \label{table:barenblatt}}}
 \begin{tabular}{||c c c c c c c c||} 
 \hline
 \multirow{2}{*}{$\tau$} & \multirow{2}{*}{$N_{\tau}$} &  \multicolumn{2}{ c }{$m=2$} &  \multicolumn{2}{ c }{$m=4$} &  \multicolumn{2}{ c ||}{$m=6$} \\ [0.5ex] 
 &  & Error & Time (s) & Error & Time (s) & Error & Time (s) \\ [0.5ex] 
 \hline\hline
 $0.4$ & $5$ & $6.35\times 10^{-2}$ & $14.54$ & $1.19\times 10^{-1}$ & $23.11$ & $1.13\times 10^{-1}$ & $22.02$\\
 \hline
 $0.2$ & $10$ & $3.72\times 10^{-2}$ & $22.16$ & $7.95\times 10^{-2}$ & $30.34$ & $7.48 \times 10^{-1}$ & $30.41$\\
 \hline
 $0.1$ & $20$ & $2.08\times 10^{-2}$ & $36.57$ & $5.03\times 10^{-2}$ & $48.41$ & $4.74 \times 10^{-2}$ & $43.95$\\
 \hline
 $0.05$ & $40$ & $1.18\times 10^{-2}$ & $55.64$ & $3.06\times 10^{-2}$ & $77.03$  & $2.90\times 10^{-2}$ & $80.10$\\
 \hline
 $0.025$ & $80$ & $8.26\times 10^{-3}$ & $77.67$ & $1.89\times 10^{-2}$ & $140.38$ & $1.79\times 10^{-2}$ & $164.89$\\
 \hline
\end{tabular}
\end{table}
\vspace{0.2cm}

\begin{figure}[h]
      \centering
      \includegraphics[width=1.0\textwidth]{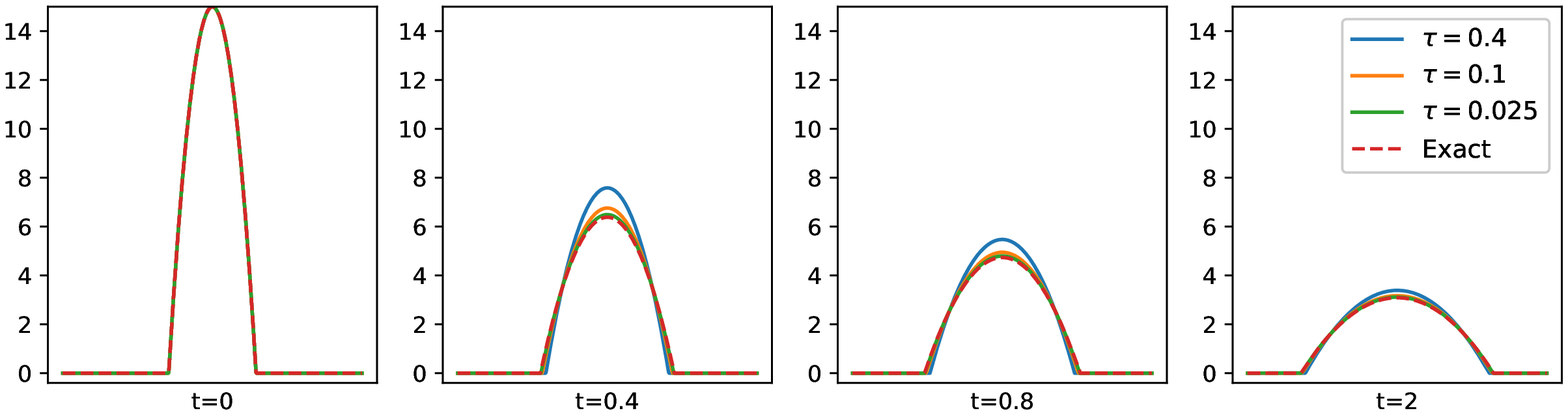}\hfil
      \includegraphics[width=1.0\textwidth]{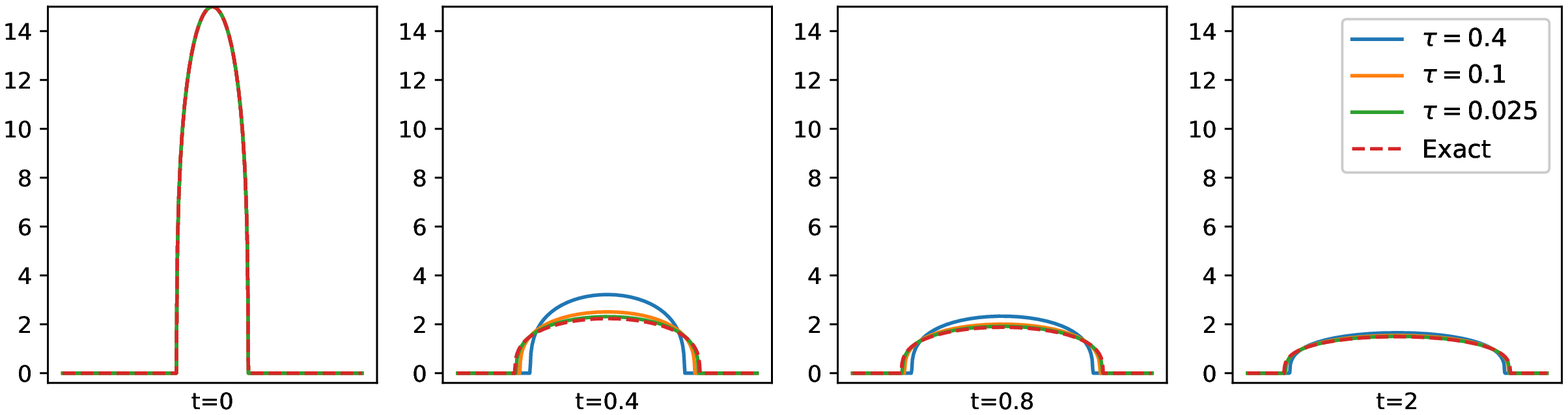}\hfil
      \includegraphics[width=1.0\textwidth]{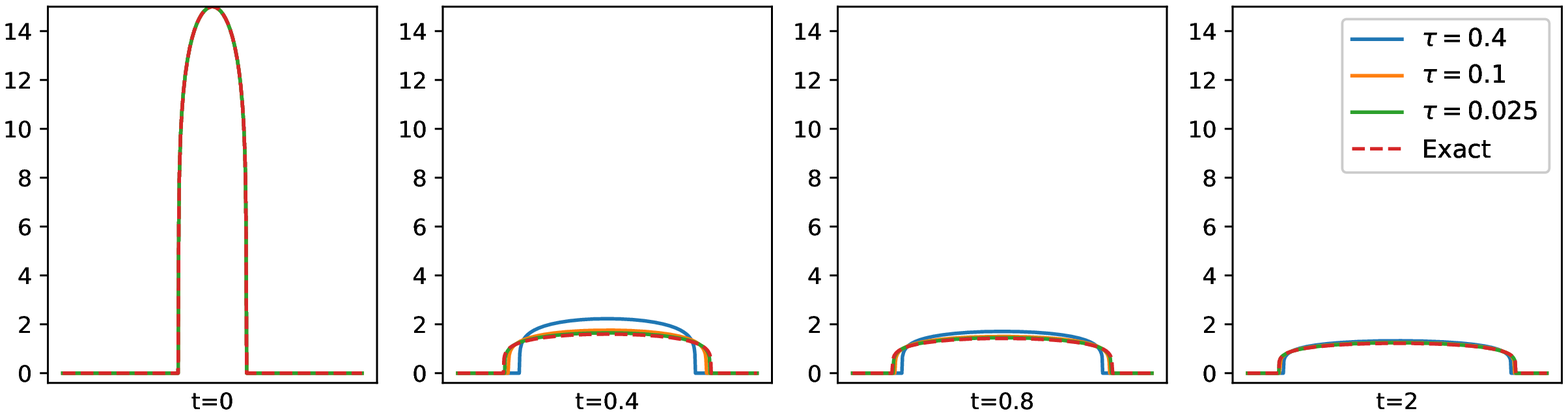}\hfil
    \caption{\footnotesize Cross sections of our computed solutions and the exact Barenblatt solution at times $t=t_0,t_0+0.4,t_0+0.8, t_0+2$ along the horizontal line $\{(x_1,0):x_1\in [-1/2,1/2]\}$. Row~1: $m=2$, Row 2: $m=4$, Row 3: $m=6$.}
    \label{fig:comparison-m-246}
\end{figure}

\subsubsection{Slow diffusion with drifts and obstacles.}
In our next set of experiments, we add spatially varying potentials to the energy functional. The resulting equations are a type of drift-diffusion equations. The energy takes the specific form
\[
    U(\rho) =  \int_\Omega \frac{\gamma}{m-1} \rho^m(x) + V(x) \rho(x) dx,
\]
where $V$ is a given function. 

In the first set of experiments, we consider an example where the initial density is the characteristic function of a star shaped region normalized to have mass $1$, and we use the fixed potential function
\begin{equation}\label{eq:sine}
    V_1(x) =  1 - \sin(5\pi x_1) \sin(3\pi x_2).
\end{equation}
The initial data and the potential $V_1$ are shown in  Figure~\ref{fig:slow-initial-sine-potential}.

\begin{figure}[h]
    \begin{subfigure}[b]{0.3\linewidth}
      \centering
      \includegraphics[width=.99\linewidth]{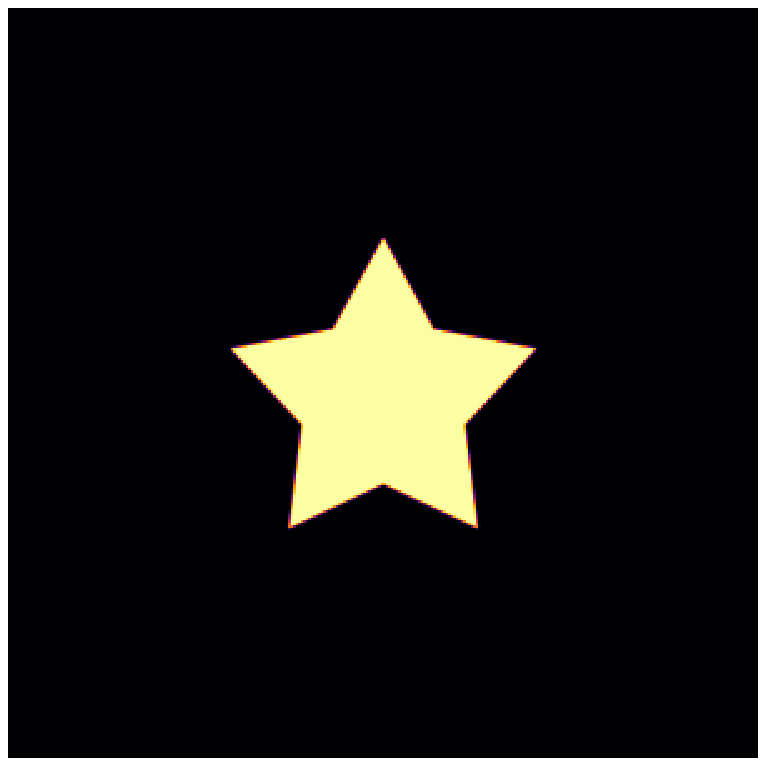}
      \caption*{\footnotesize Initial density}
    \end{subfigure}
    \begin{subfigure}[b]{0.3\linewidth}
      \centering
      \includegraphics[width=.99\linewidth]{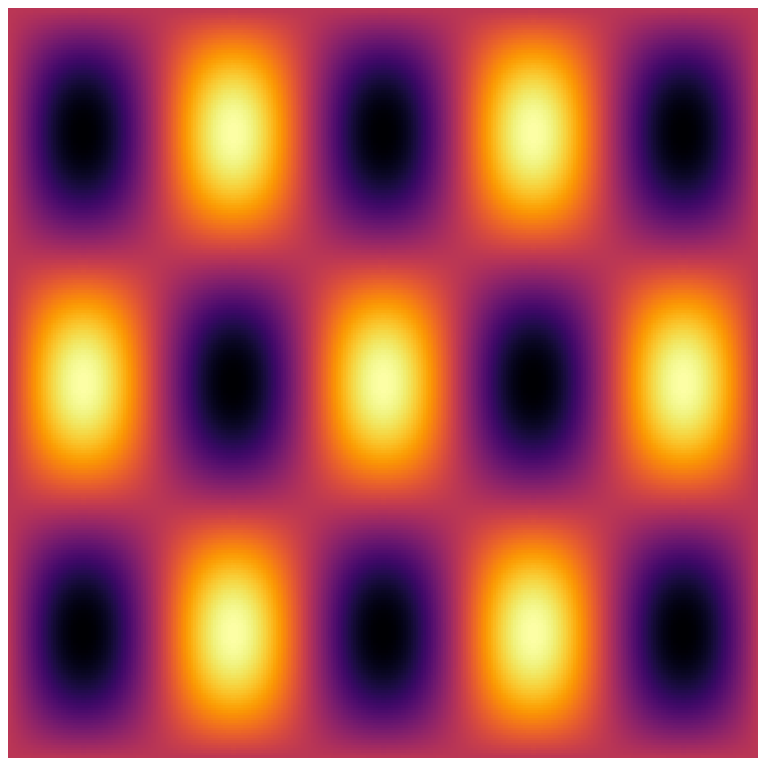}
      \caption*{\footnotesize The potential $V_1$}
    \end{subfigure}\hfil
    \caption{\footnotesize Higher values are depicted with brighter pixels.}
    \label{fig:slow-initial-sine-potential}
\end{figure}

Using this setup, we run two different experiments, one where $m=2$ and another where $m=4$. In both cases, we set $\gamma=0.1$ and use the time step $\tau=0.001$.  We run the equations until we reach a state that is essentially stationary.    The flow for $m=2$ is run from time $t=0$ to time $t=5$, and the flow for $m=4$ is run from time $t=0$ to time $t=2$.       The flow for the $m=2$ case is shown in Figure~\ref{fig:slow-diffusion-sine-example-m-2} and the $m=4$ case is shown in Figure~\ref{fig:slow-diffusion-sine-example-m-4}. The solutions show the density is drawn to regions where the potential is small, while avoiding concentration due to the $\rho^m$ term.  Notice that the steady state for $m=4$ is much more diffuse than the steady state for $m=2$, this is because $\rho^4$ penalizes concentration much more than $\rho^2$.

\begin{figure}[h]
    \begin{minipage}[b]{0.2\linewidth}
      \centering
      \includegraphics[width=.99\linewidth]{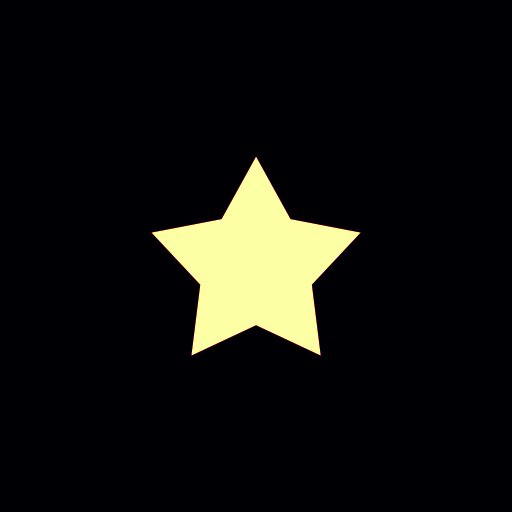}
    \end{minipage}\hfil
    \begin{minipage}[b]{0.2\linewidth}
      \centering
      \includegraphics[width=.99\linewidth]{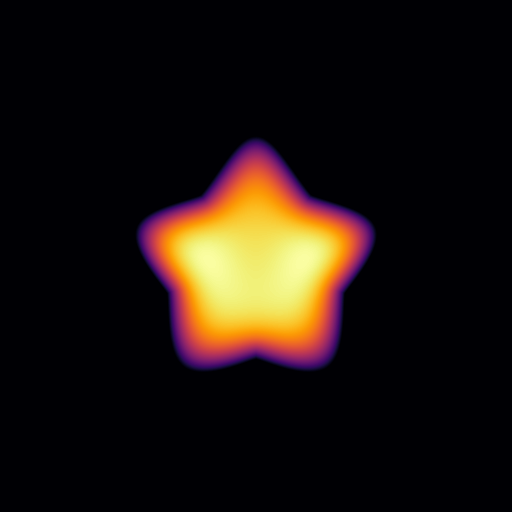}
    \end{minipage}\hfil
    \begin{minipage}[b]{0.2\linewidth}
      \centering
      \includegraphics[width=.99\linewidth]{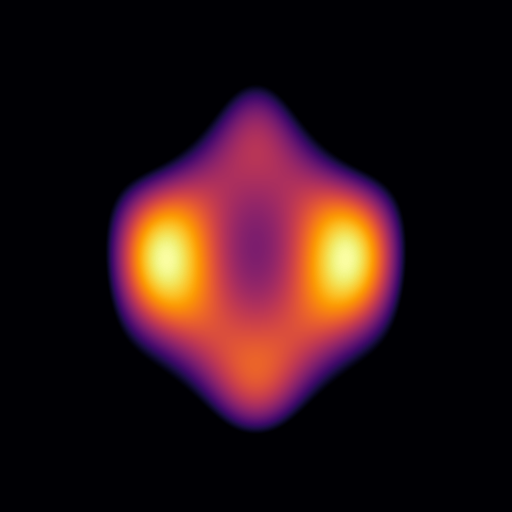}
    \end{minipage}\hfil
    \begin{minipage}[b]{0.2\linewidth}
      \centering
      \includegraphics[width=.99\linewidth]{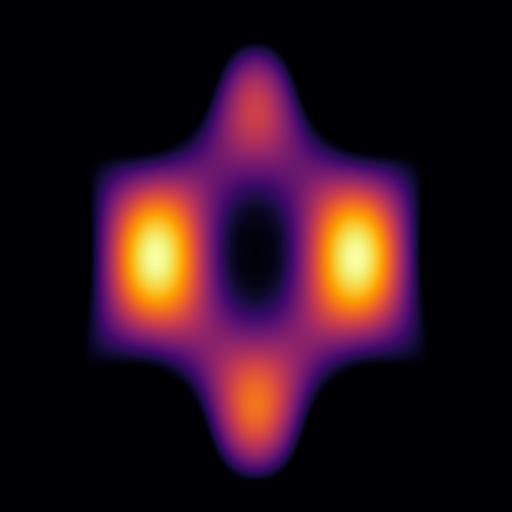}
    \end{minipage}\hfil
    \begin{minipage}[b]{0.2\linewidth}
      \centering
      \includegraphics[width=.99\linewidth]{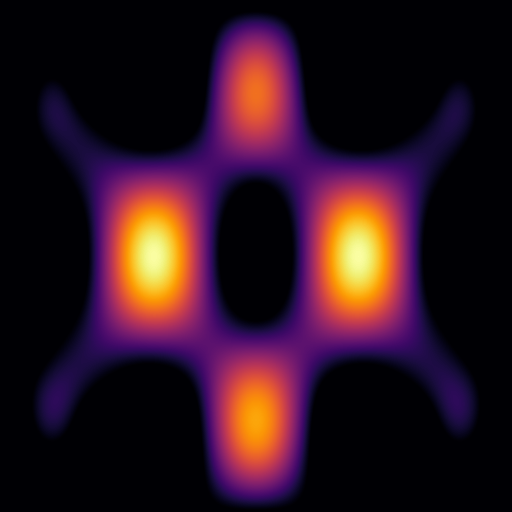}
    \end{minipage}\hfil
    \medskip
    \begin{minipage}[b]{0.2\linewidth}
      \centering
      \includegraphics[width=.99\linewidth]{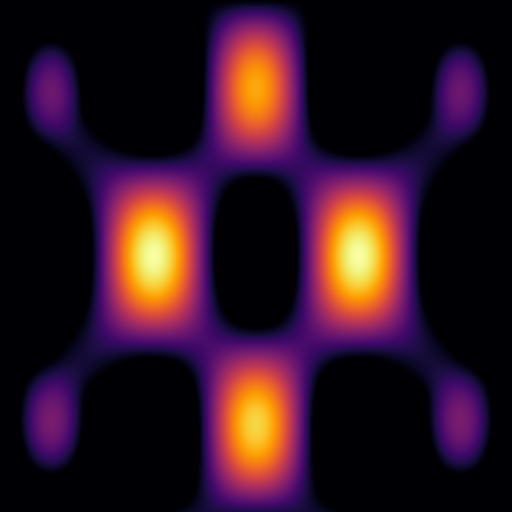}
    \end{minipage}\hfil
    \begin{minipage}[b]{0.2\linewidth}
      \centering
      \includegraphics[width=.99\linewidth]{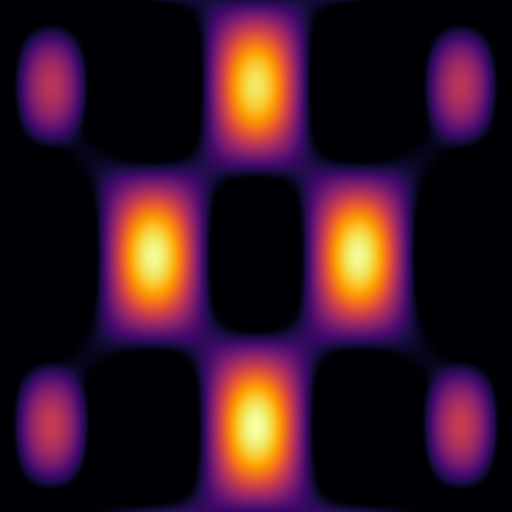}
    \end{minipage}\hfil
    \begin{minipage}[b]{0.2\linewidth}
      \centering
      \includegraphics[width=.99\linewidth]{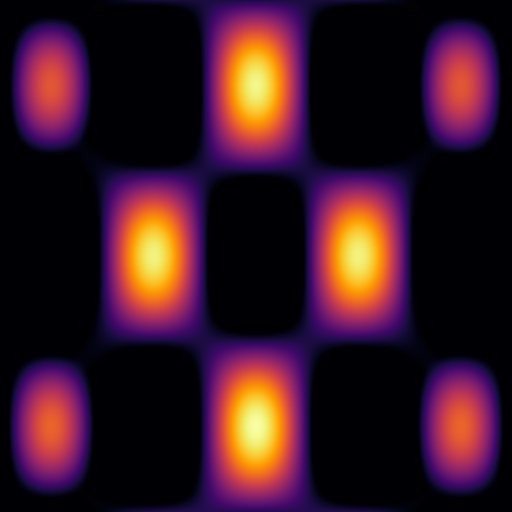}
    \end{minipage}\hfil
    \begin{minipage}[b]{0.2\linewidth}
      \centering
      \includegraphics[width=.99\linewidth]{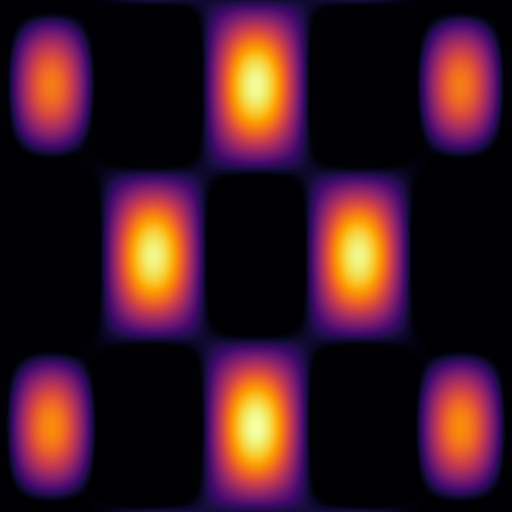}
    \end{minipage}\hfil
    \begin{minipage}[b]{0.2\linewidth}
      \centering
      \includegraphics[width=.99\linewidth]{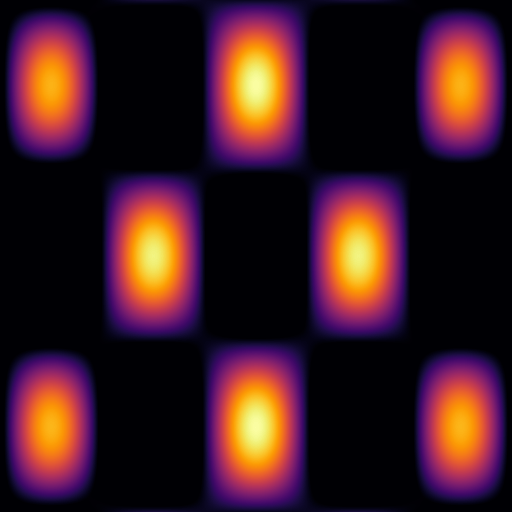}
    \end{minipage}\hfil
    \caption{\footnotesize PME with exponent $m=2$ and potential given by (\ref{eq:sine}).  The images show the evolution from time $t=0$ to $t=5$ (top left to bottom right).  The final image is the approximate steady state. Images are $512\times 512$ pixels. Brighter pixels indicate larger density values.}
    \label{fig:slow-diffusion-sine-example-m-2}
\end{figure}
\begin{figure}[h]
    \begin{minipage}[b]{0.2\linewidth}
      \centering
      \includegraphics[width=.99\linewidth]{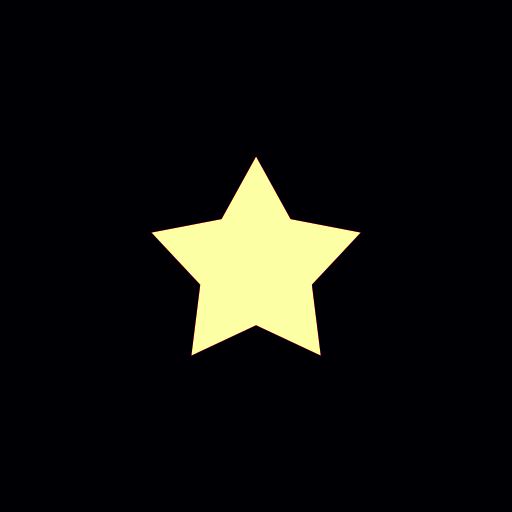}
    \end{minipage}\hfil
    \begin{minipage}[b]{0.2\linewidth}
      \centering
      \includegraphics[width=.99\linewidth]{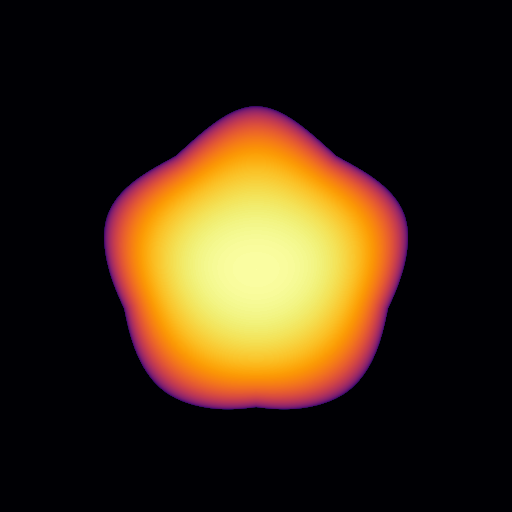}
    \end{minipage}\hfil
    \begin{minipage}[b]{0.2\linewidth}
      \centering
      \includegraphics[width=.99\linewidth]{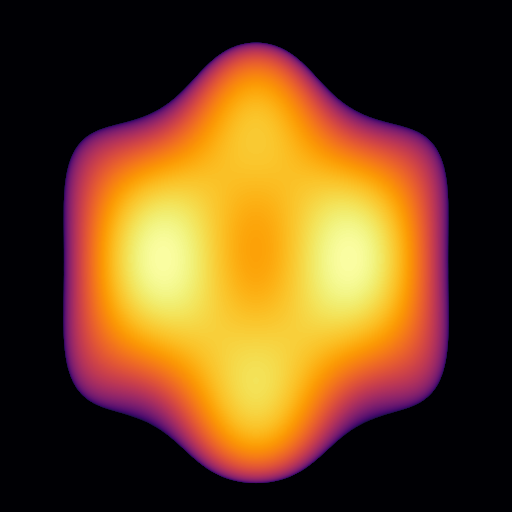}
    \end{minipage}\hfil
    \begin{minipage}[b]{0.2\linewidth}
      \centering
      \includegraphics[width=.99\linewidth]{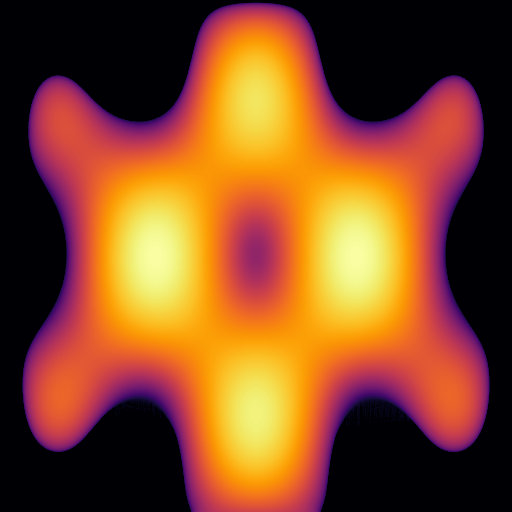}
    \end{minipage}\hfil
    \begin{minipage}[b]{0.2\linewidth}
      \centering
      \includegraphics[width=.99\linewidth]{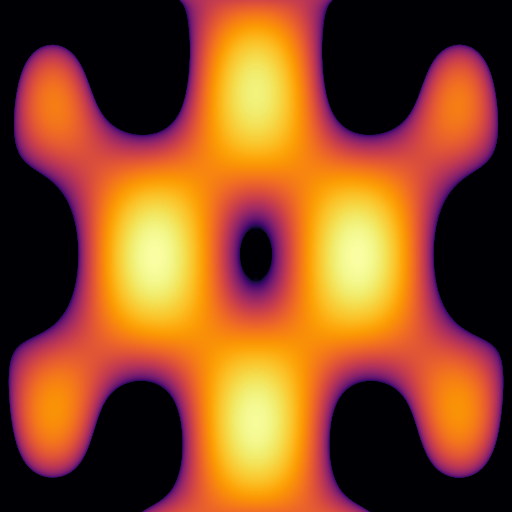}
    \end{minipage}\hfil
    \medskip
    \begin{minipage}[b]{0.2\linewidth}
      \centering
      \includegraphics[width=.99\linewidth]{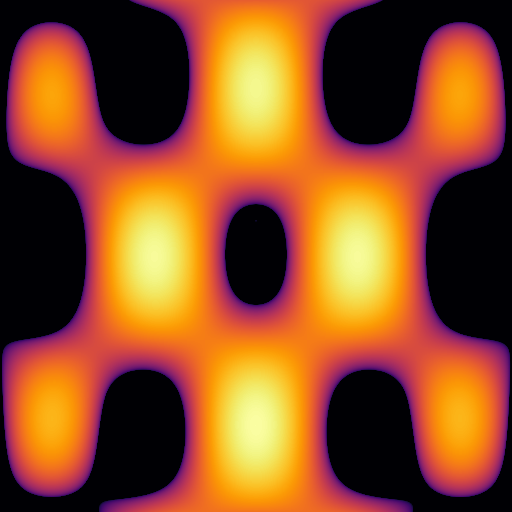}
    \end{minipage}\hfil
    \begin{minipage}[b]{0.2\linewidth}
      \centering
      \includegraphics[width=.99\linewidth]{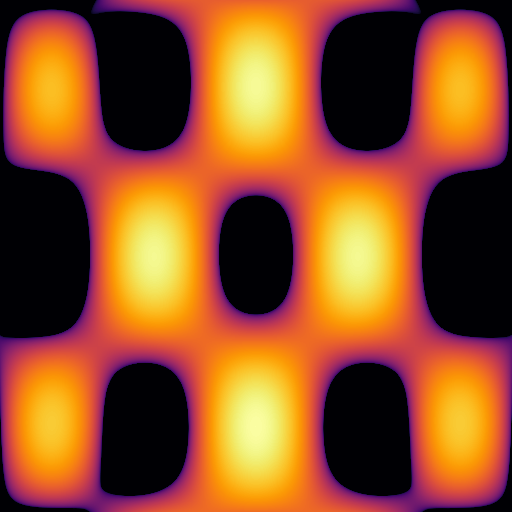}
    \end{minipage}\hfil
    \begin{minipage}[b]{0.2\linewidth}
      \centering
      \includegraphics[width=.99\linewidth]{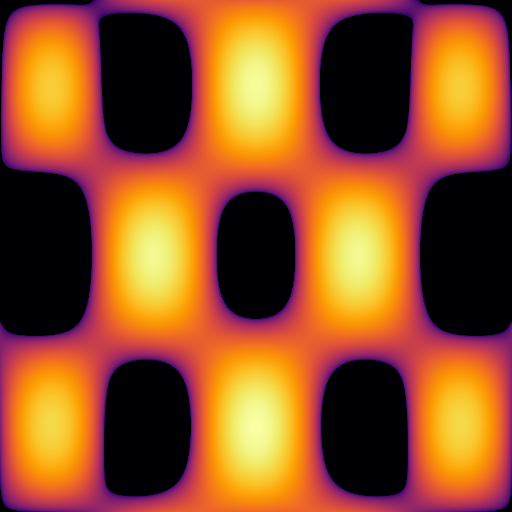}
    \end{minipage}\hfil
    \begin{minipage}[b]{0.2\linewidth}
      \centering
      \includegraphics[width=.99\linewidth]{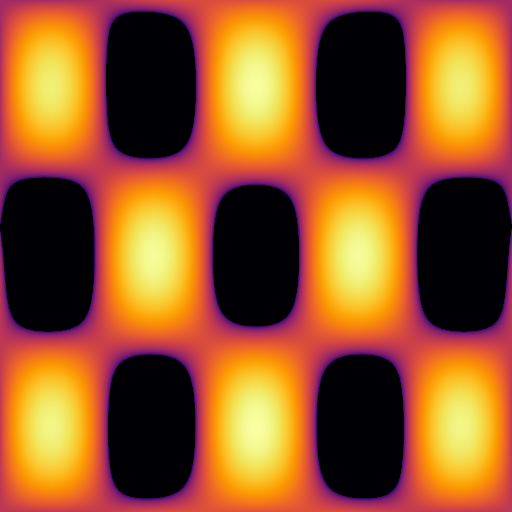}
    \end{minipage}\hfil
    \begin{minipage}[b]{0.2\linewidth}
      \centering
      \includegraphics[width=.99\linewidth]{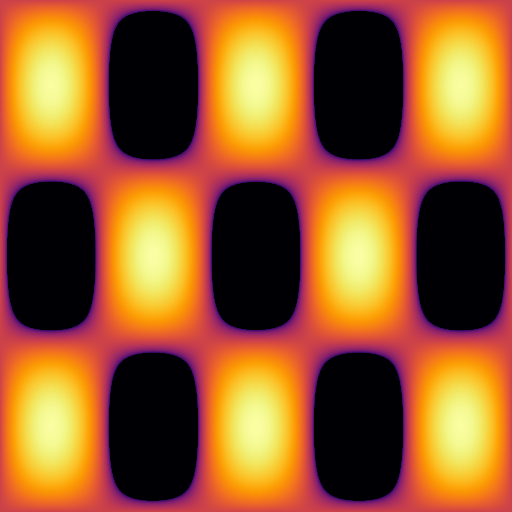}
    \end{minipage}\hfil
    \caption{\footnotesize PME with exponent $m=4$ and potential given by (\ref{eq:sine}).  The images show the evolution from time $t=0$ to $t=2$ (top left to bottom right).  The final image is the approximate steady state. Images are $512\times 512$ pixels. Brighter pixels indicate larger density values.}
    \label{fig:slow-diffusion-sine-example-m-4}
\end{figure}


Next, we consider a different potential function:
\begin{equation}\label{eq:obstacle_potential}
    V_2(x) = 10 \left((x_1-0.4)^2 + (x_2-0.4)^2\right) + \iota_{\Omega\setminus E}(x)
\end{equation}
where $E$ is a given subset of $\Omega$ and $\iota_{\Omega\setminus E}:\Omega\rightarrow \R \cup \{+\infty\}$ is the indicator function
\begin{equation*}
    \iota_{\Omega\setminus E}(x) =
    \begin{cases}
        0 & \text{if } x \in \Omega\backslash E\\
        +\infty & \text{if } x \in E.
    \end{cases}
\end{equation*}
With this setup, the set $E$ represents an obstacle that the density is not allowed to penetrate.   During the flow, the density diffuses and drifts towards the lower level sets of $V_2$, all while avoiding the set $E$.

In Figure~\ref{fig:m-4-obstacle-contour} and Figure~\ref{fig:m-4-obstacle-contour-star}, we display two different experiments with different obstacles $E$, but the same diffusion exponent $m=4$.  In both experiments, the starting density is the characteristic function of a square centered at $(-0.3,-0.3)$ with side length $0.2$ renormalized to have unit mass.  
In  Figure~\ref{fig:m-4-obstacle-contour}, the obstacle is a disc with radius $0.2$ centered at the origin, and in  Figure~\ref{fig:m-4-obstacle-contour-star}, the obstacle is a star shaped region centered at the origin. In both experiments, we set $\tau=0.001$, $\gamma=0.0075$ and we run the flow until time $t=2$.
An interesting difference between the two flows is that the non-convexity of the star shaped obstacle results in some mass being trapped between the arms of the star.  It is not entirely clear if the mass eventually escapes as time goes to infinity. This is because the PME allows for compactly supported solutions (in contrast to say the behavior of the heat equation).
\begin{figure}[h]
    \begin{minipage}[b]{0.2\linewidth}
      \centering
      \includegraphics[width=.99\linewidth]{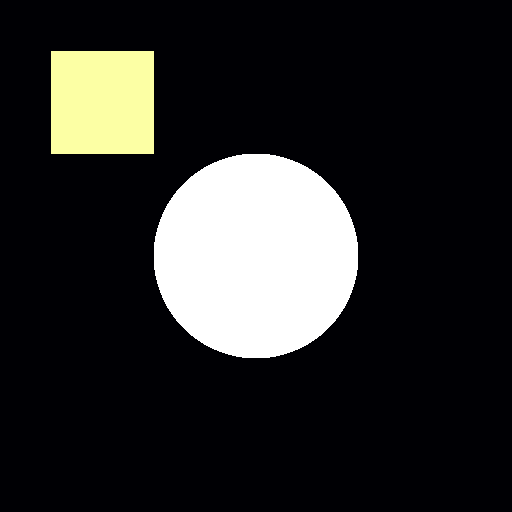}
    \end{minipage}\hfil
    \begin{minipage}[b]{0.2\linewidth}
      \centering
      \includegraphics[width=.99\linewidth]{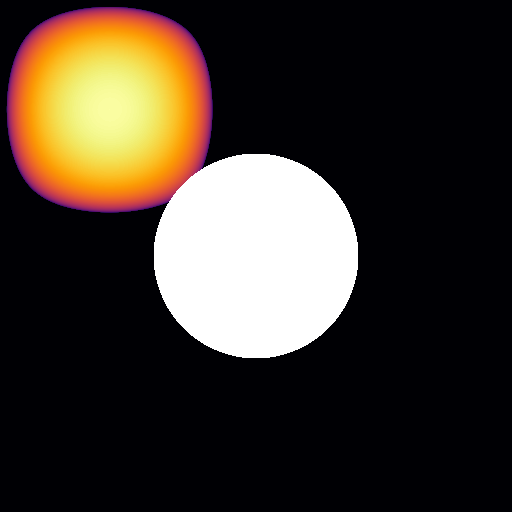}
    \end{minipage}\hfil
    \begin{minipage}[b]{0.2\linewidth}
      \centering
      \includegraphics[width=.99\linewidth]{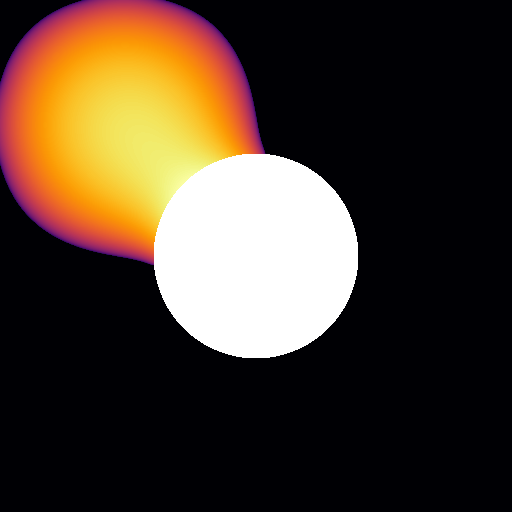}
    \end{minipage}\hfil
    \begin{minipage}[b]{0.2\linewidth}
      \centering
      \includegraphics[width=.99\linewidth]{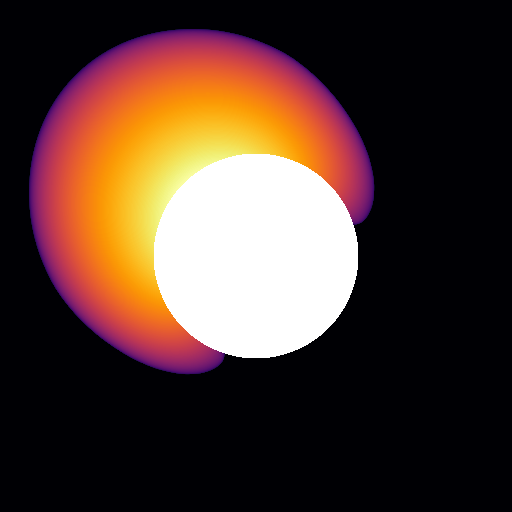}
    \end{minipage}\hfil
    \begin{minipage}[b]{0.2\linewidth}
      \centering
      \includegraphics[width=.99\linewidth]{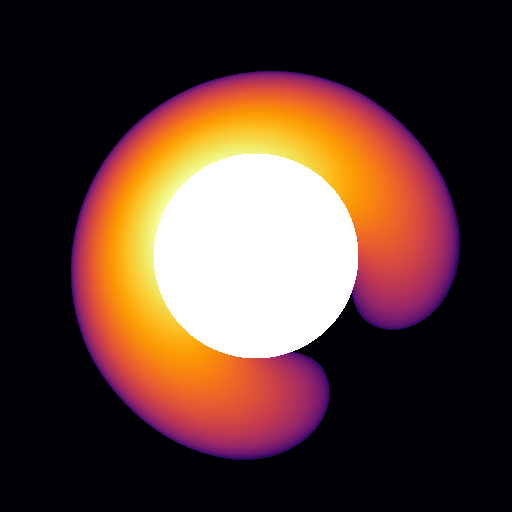}
    \end{minipage}\hfil
    \medskip
    \begin{minipage}[b]{0.2\linewidth}
      \centering
      \includegraphics[width=.99\linewidth]{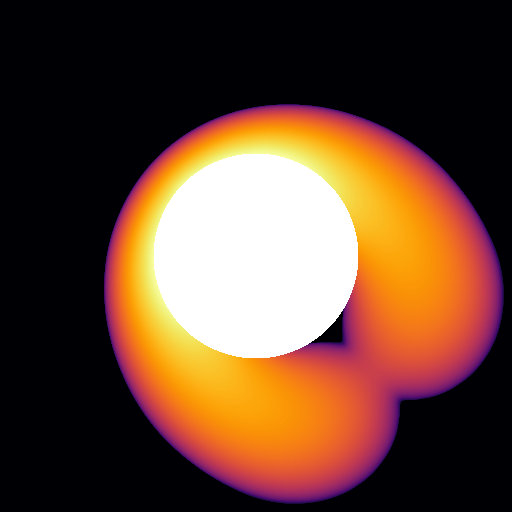}
    \end{minipage}\hfil
    \begin{minipage}[b]{0.2\linewidth}
      \centering
      \includegraphics[width=.99\linewidth]{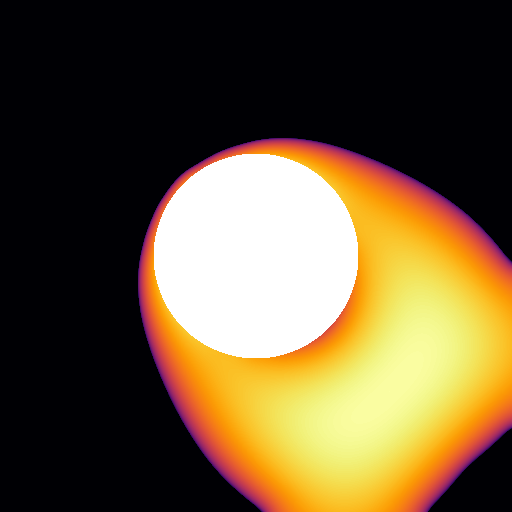}
    \end{minipage}\hfil
    \begin{minipage}[b]{0.2\linewidth}
      \centering
      \includegraphics[width=.99\linewidth]{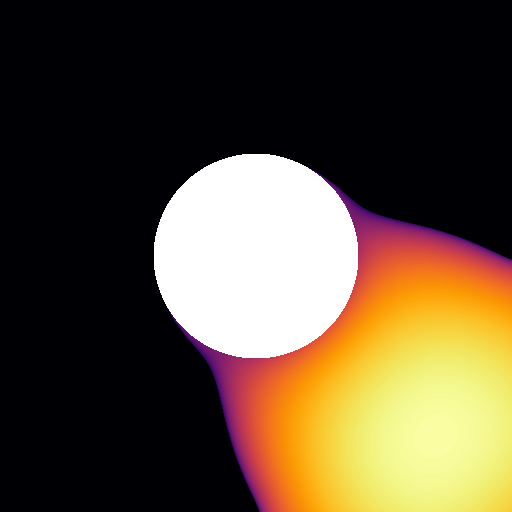}
    \end{minipage}\hfil
    \begin{minipage}[b]{0.2\linewidth}
      \centering
      \includegraphics[width=.99\linewidth]{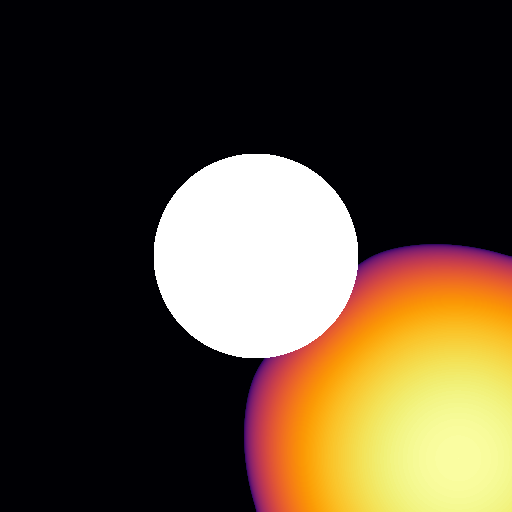}
    \end{minipage}\hfil
    \begin{minipage}[b]{0.2\linewidth}
      \centering
      \includegraphics[width=.99\linewidth]{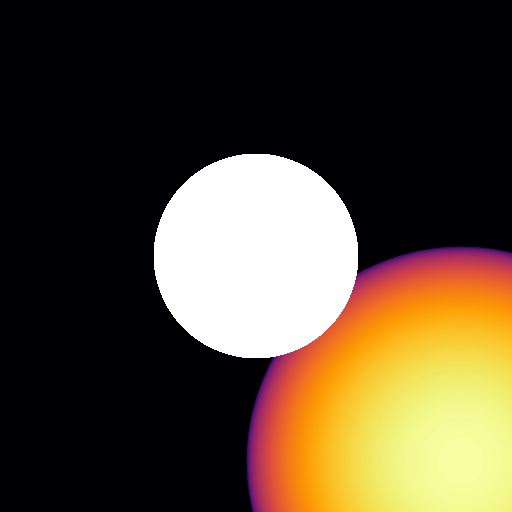}
    \end{minipage}\hfil
    \medskip
    \caption{\footnotesize PME with exponent $m=4$, $\gamma=.0075$ and potential given by (\ref{eq:obstacle_potential}).  The obstacle $E$ is represented by the white region.  The images show the evolution from time $t=0$ to $t=2$ (top left to bottom right).  Images are $512\times 512$ pixels. With the exception of the obstacle, brighter pixels indicate larger density values.}
    \label{fig:m-4-obstacle-contour}
\end{figure}

\begin{figure}[h]
    \begin{minipage}[b]{0.2\linewidth}
      \centering
      \includegraphics[width=.99\linewidth]{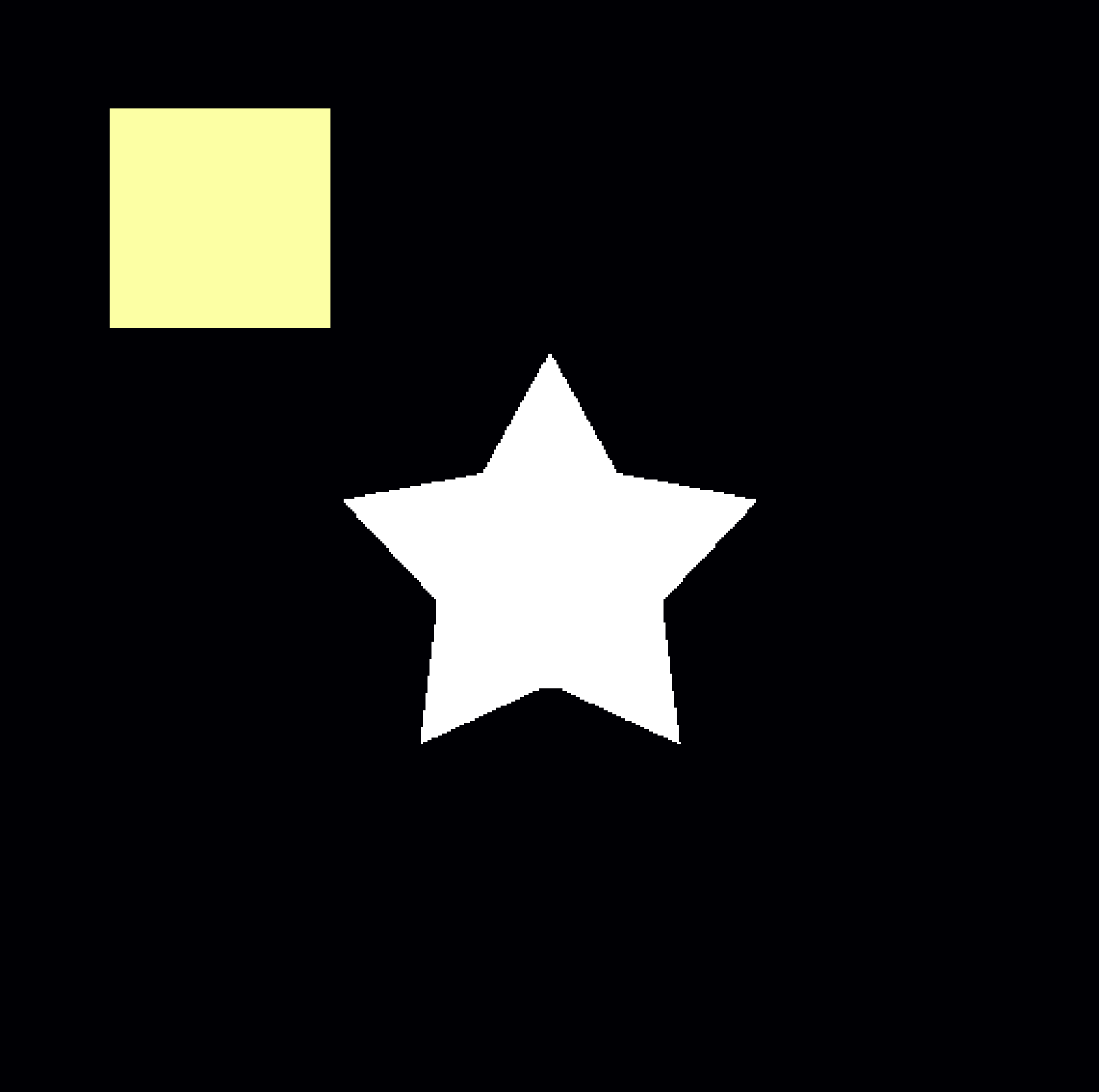}
    \end{minipage}\hfil
    \begin{minipage}[b]{0.2\linewidth}
      \centering
      \includegraphics[width=.99\linewidth]{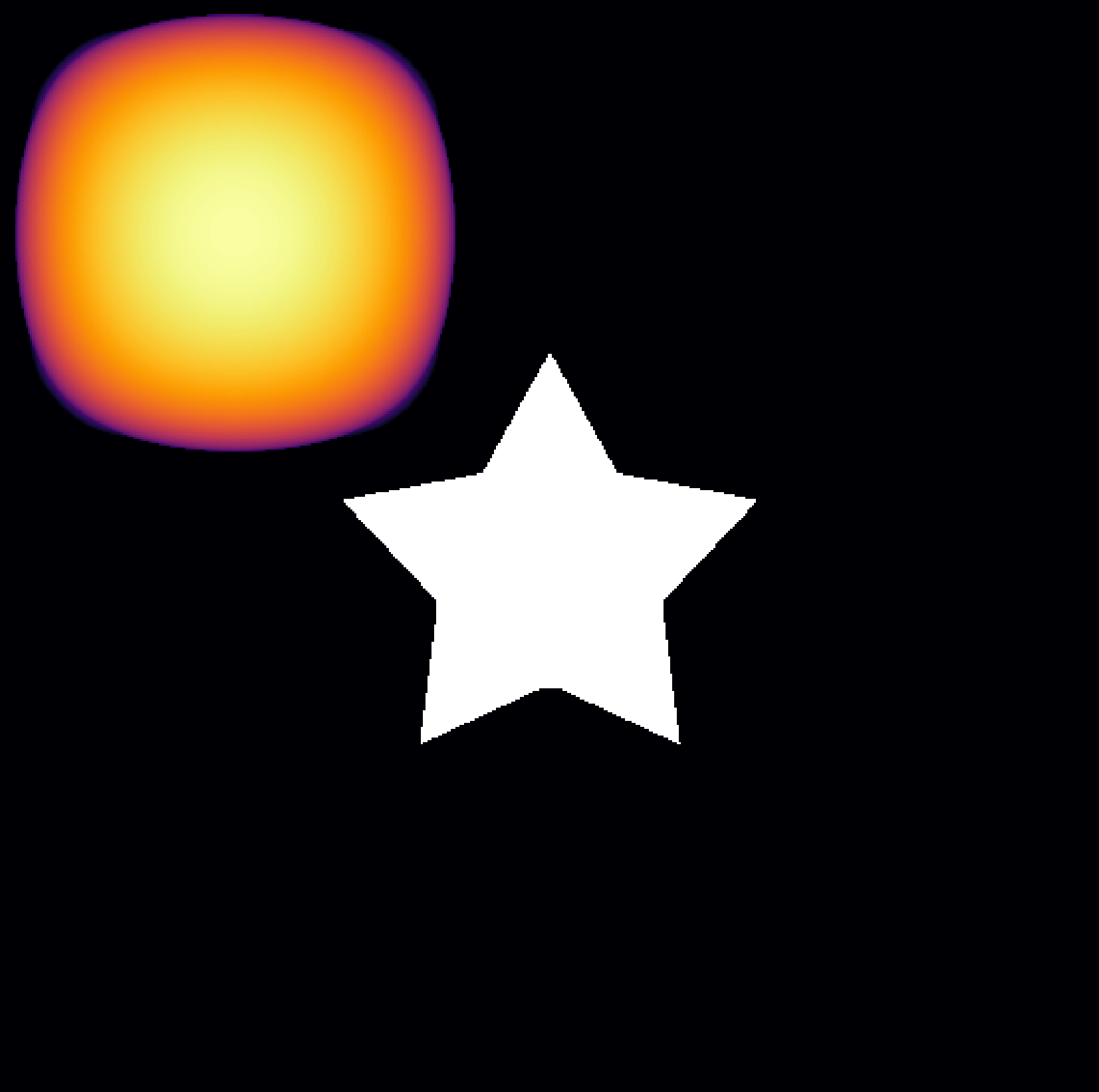}
    \end{minipage}\hfil
    \begin{minipage}[b]{0.2\linewidth}
      \centering
      \includegraphics[width=.99\linewidth]{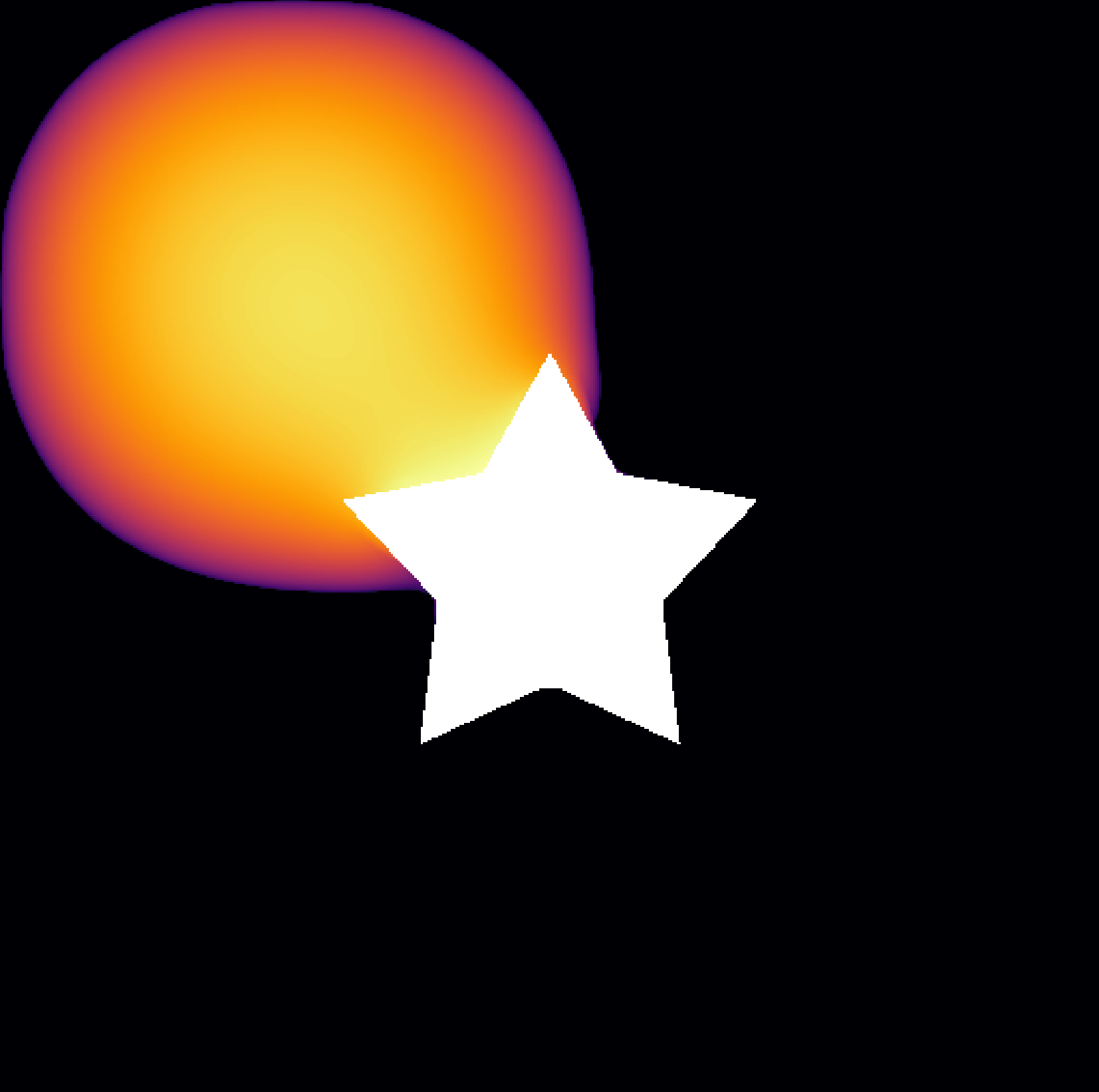}
    \end{minipage}\hfil
    \begin{minipage}[b]{0.2\linewidth}
      \centering
      \includegraphics[width=.99\linewidth]{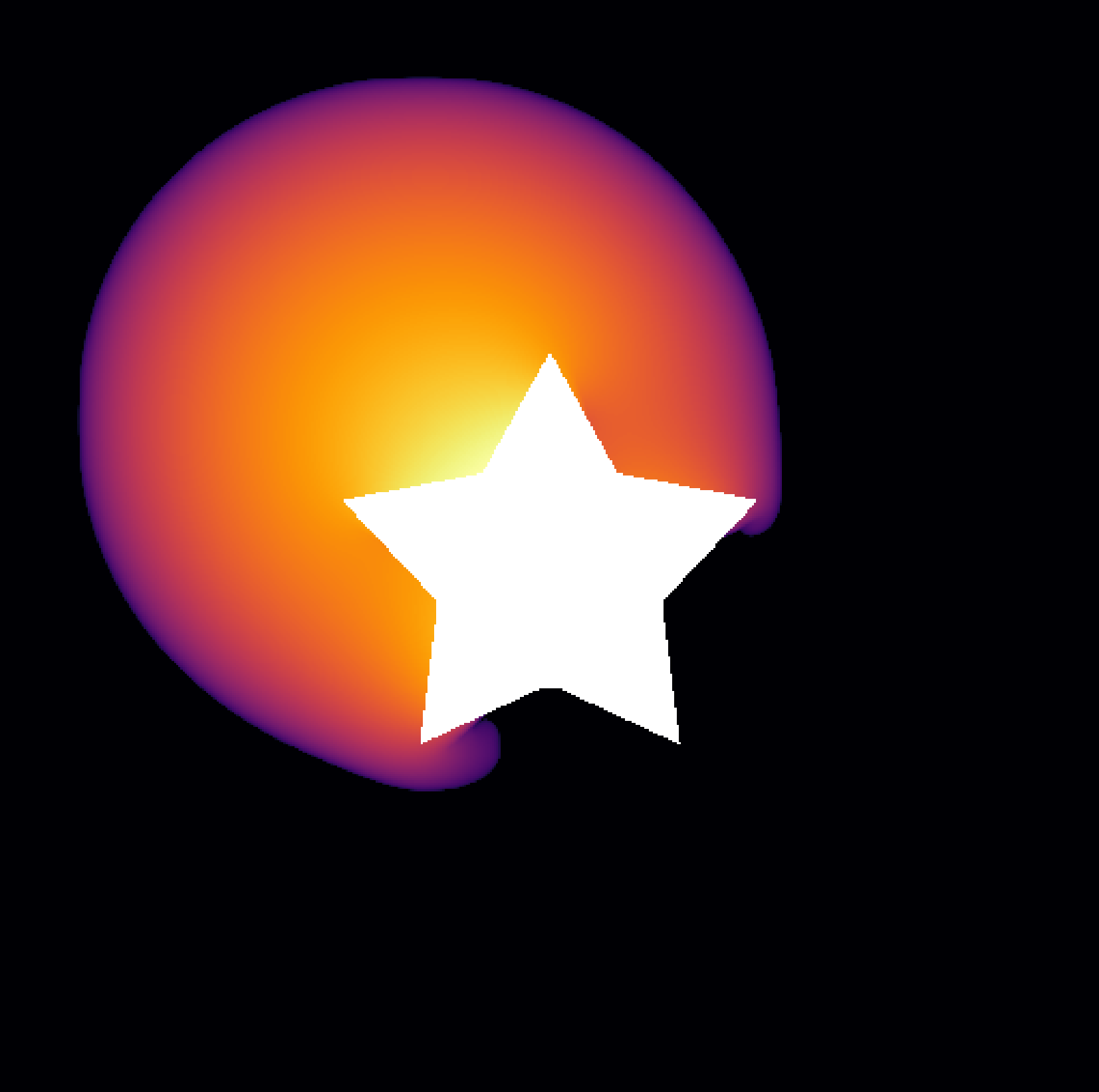}
    \end{minipage}\hfil
    \begin{minipage}[b]{0.2\linewidth}
      \centering
      \includegraphics[width=.99\linewidth]{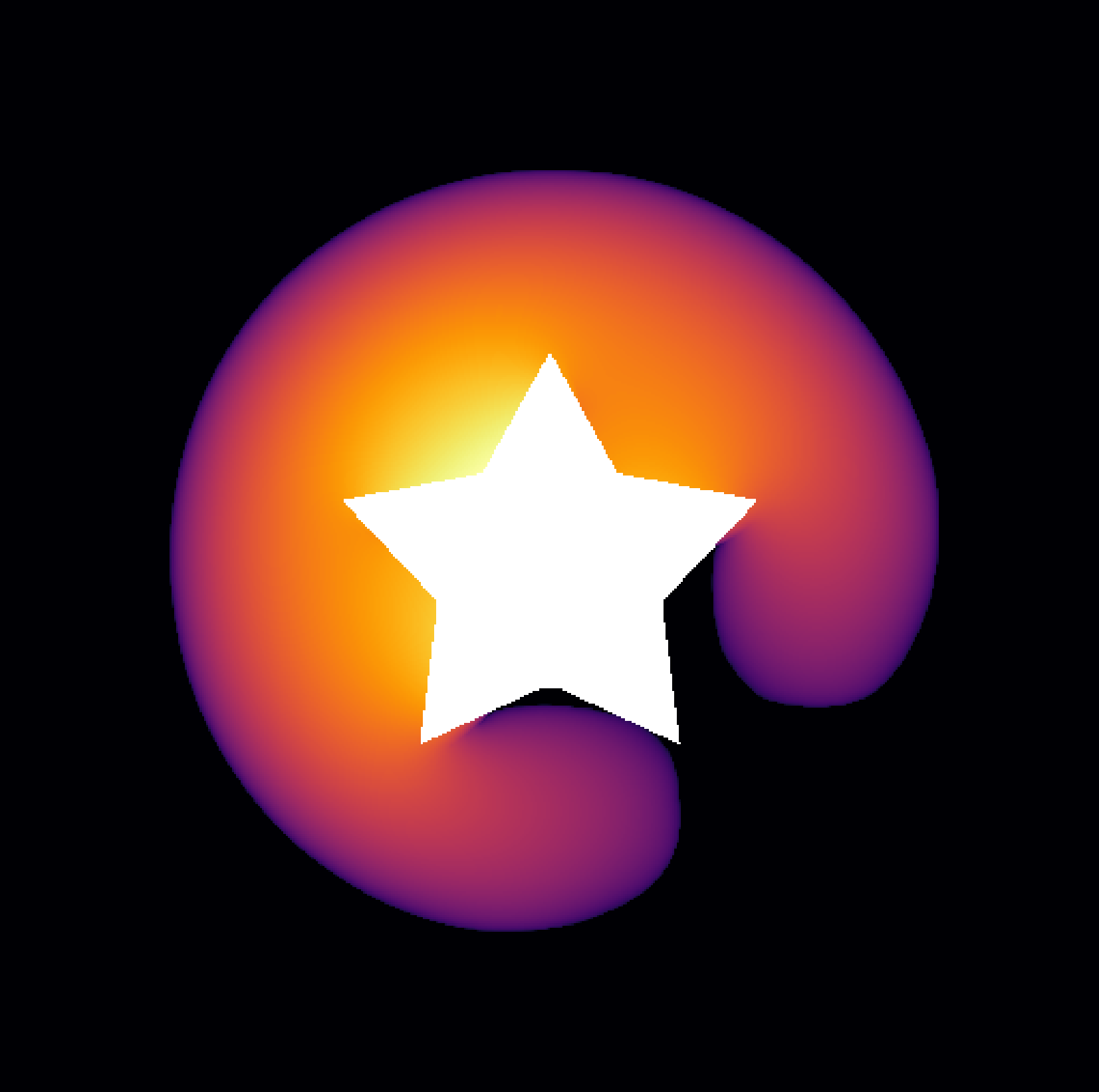}
    \end{minipage}\hfil
    \medskip
    \begin{minipage}[b]{0.2\linewidth}
      \centering
      \includegraphics[width=.99\linewidth]{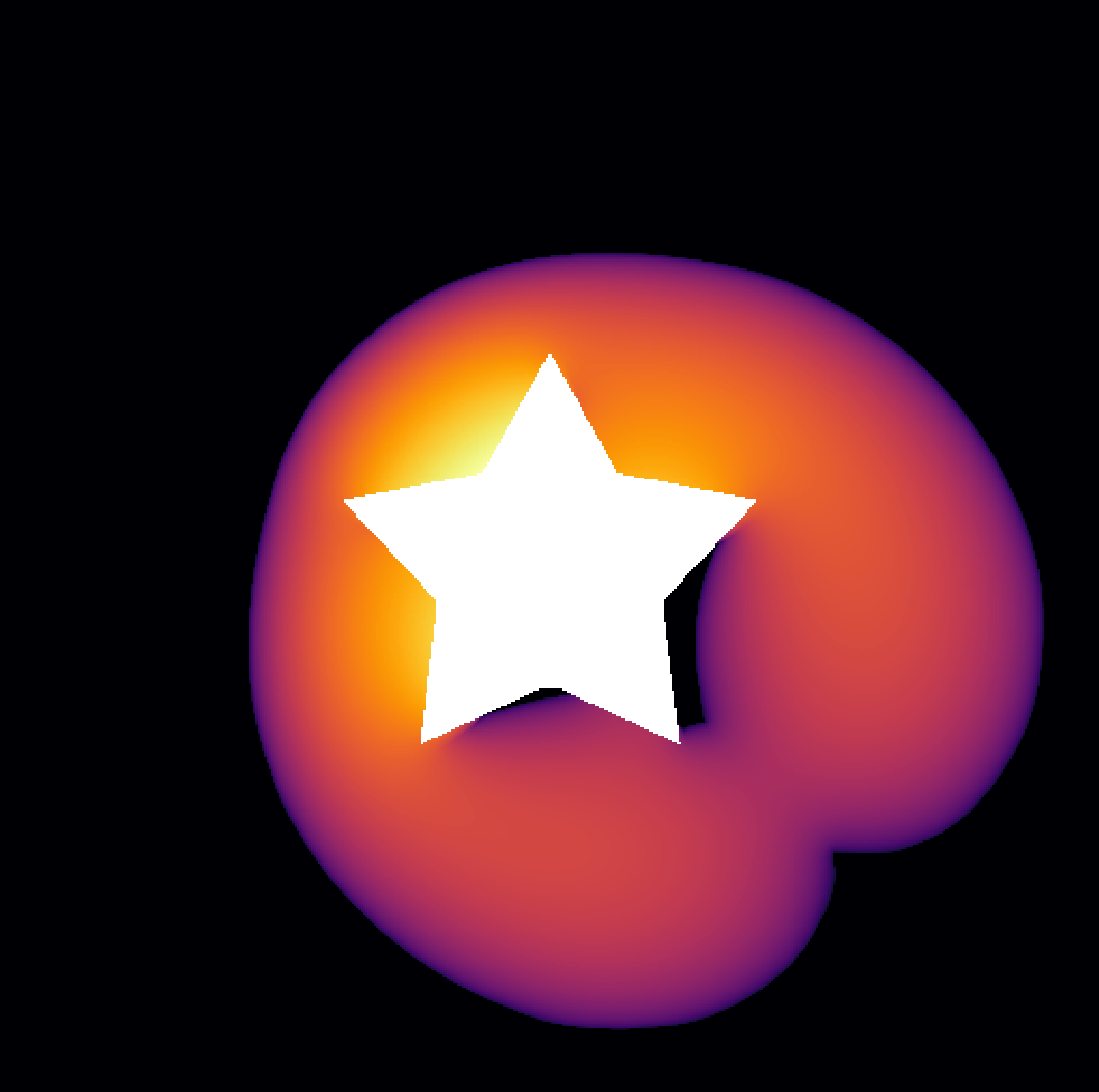}
    \end{minipage}\hfil
    \begin{minipage}[b]{0.2\linewidth}
      \centering
      \includegraphics[width=.99\linewidth]{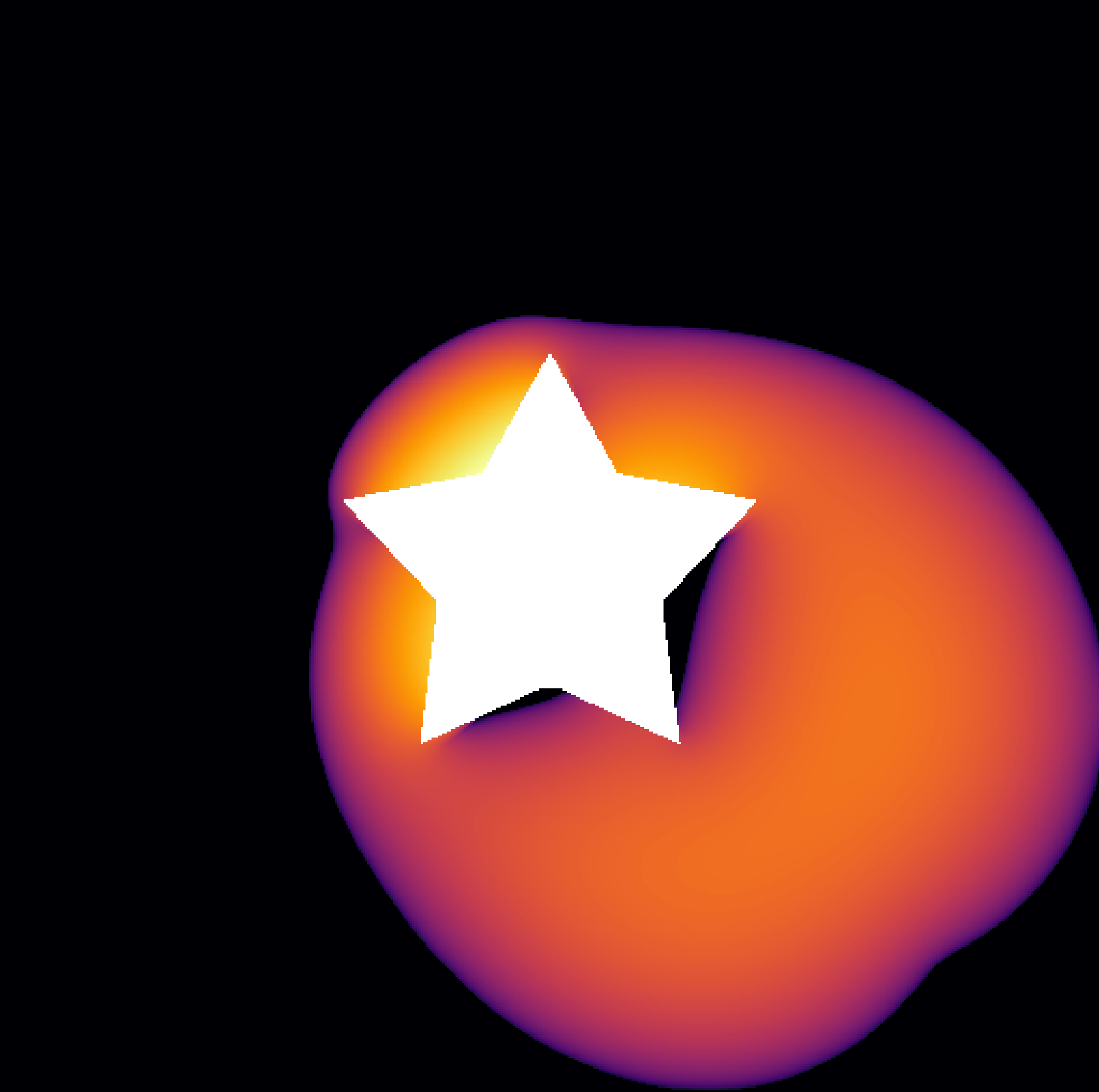}
    \end{minipage}\hfil
    \begin{minipage}[b]{0.2\linewidth}
      \centering
      \includegraphics[width=.99\linewidth]{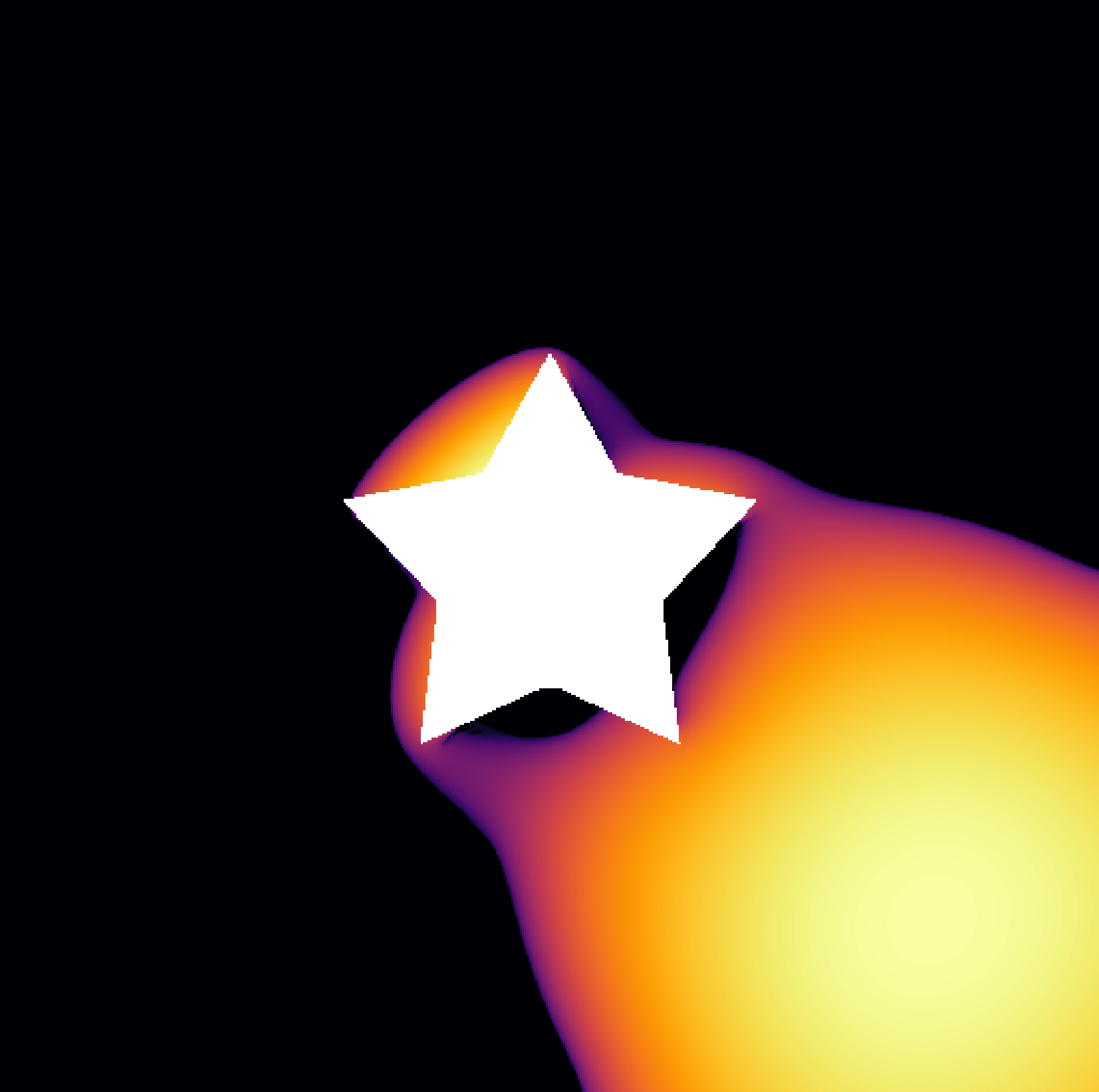}
    \end{minipage}\hfil
    \begin{minipage}[b]{0.2\linewidth}
      \centering
      \includegraphics[width=.99\linewidth]{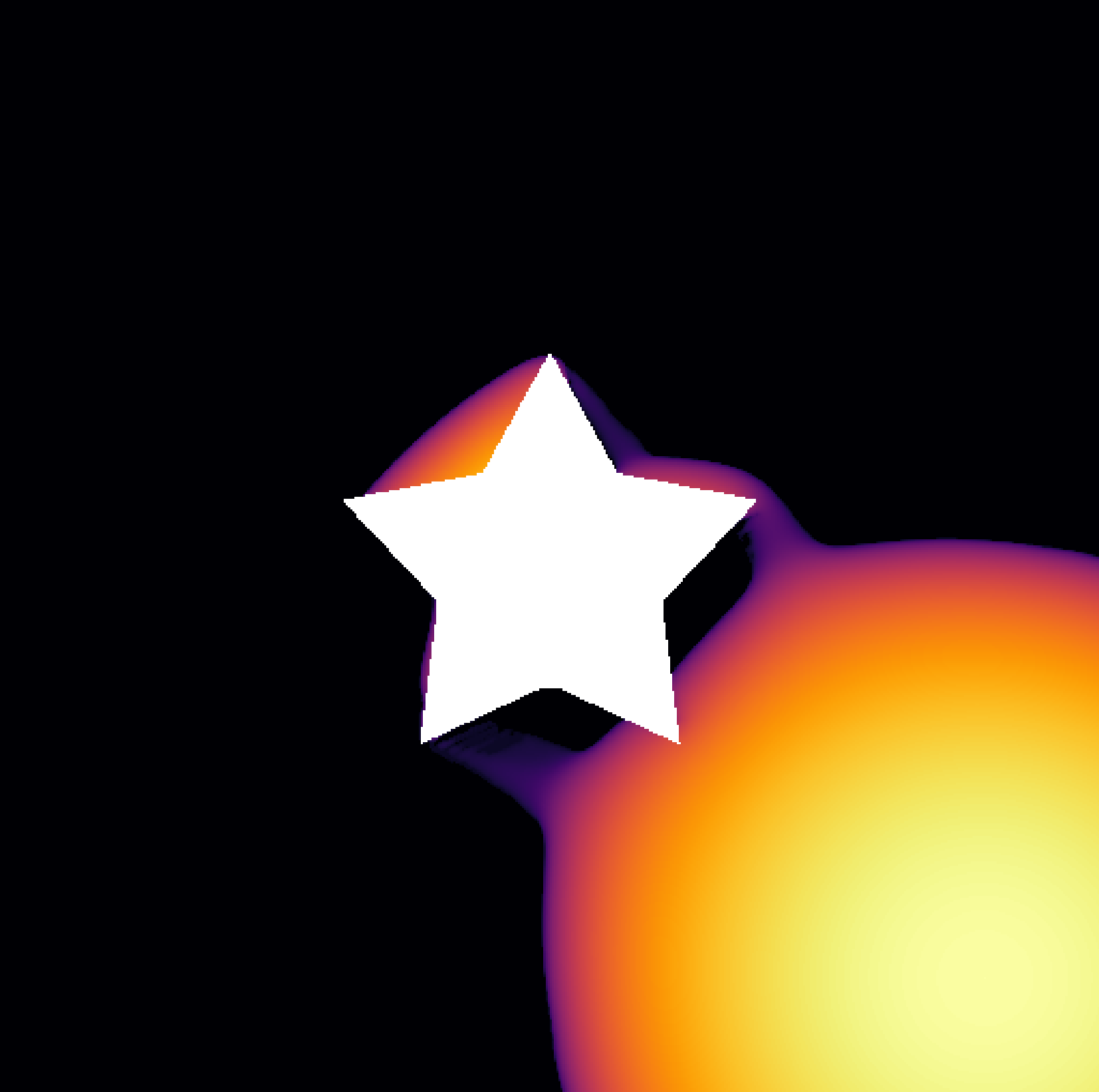}
    \end{minipage}\hfil
    \begin{minipage}[b]{0.2\linewidth}
      \centering
      \includegraphics[width=.99\linewidth]{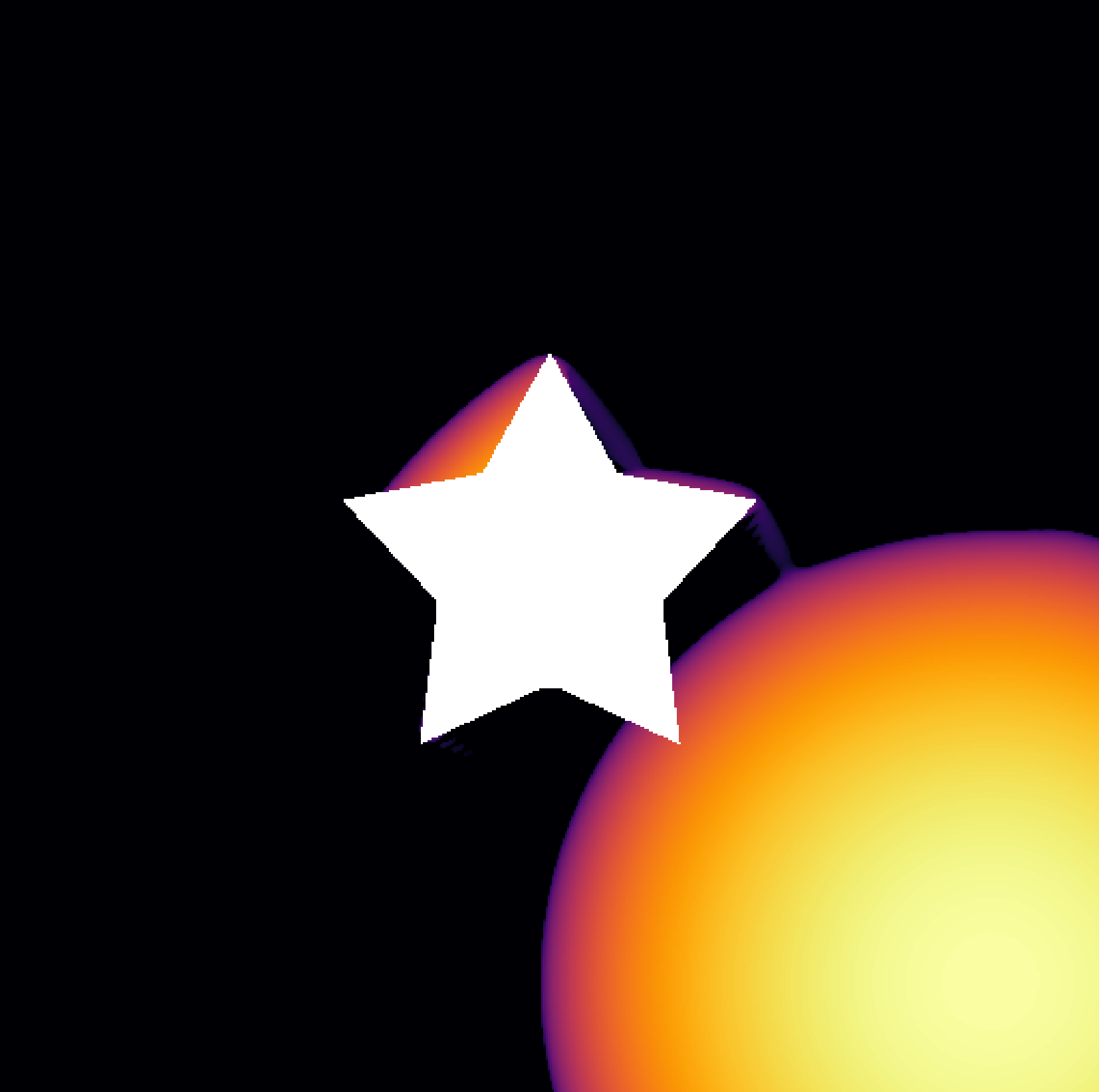}
    \end{minipage}\hfil
    \medskip
    \caption{\footnotesize PME with exponent $m=4$, $\gamma=.0075$ and potential given by (\ref{eq:obstacle_potential}).  The obstacle $E$ is represented by the white region.  The images show the evolution from time $t=0$ to $t=2$ (top left to bottom right). Images are $512\times 512$ pixels. With the exception of the obstacle, brighter pixels indicate larger density values.}
    \label{fig:m-4-obstacle-contour-star}
\end{figure}

\subsubsection{Non-convex \texorpdfstring{$U$}{U} (aggregation-diffusion)}

In this experiment, we simulate (\ref{eq:pde}) with an energy functional $U$ that is not a convex with respect to $\rho$.  Specifically, we consider the energy
\begin{equation}\label{eq:nonconvex_example}
    U(\rho) =\mathcal{W}(\rho)+ \int_\Omega \frac{1}{60} \rho^3(x)\, dx, 
\end{equation}
where 
\[
\mathcal{W}(\rho):= \frac{1}{2}\int_{\Omega}\int_{\Omega}|x-y|^2\rho(x)\rho(y)\, dy \, dx.
\]
By separating out the square, one can check $\mathcal{W}$ is concave with respect to $\rho$.

While convex energies $U$ encourage mass diffusion, non-convex energies allow for both aggregation and diffusion phenomena.  Indeed, one can see that $\mathcal{W}(\rho)$ encourages the density to concentrate while the $\rho^3$ term encourages the density to diffuse.  Due to the convolution, $\mathcal{W}$ can be viewed as a ``lower order'' term as compared to $\rho^3$.  However, since the coefficients in front of the convolution is much larger than the coefficient in front of the $\rho^3$ term, the aggregation effect will dominate until the density reaches a certain saturation level. 

Here we run a single experiment starting with an initial density that is the sum of the characteristic function of four squares with side lengths $0.2$ centered at each combination of $(\pm 0.3, \pm 0.3)$ and  renormalized to have total mass equal to one.   We set $\tau=.005$ and run the flow from time $t=0$ to $t=10$, at which time the evolution appears to have reached a steady state.

The results of the experiment are displayed in Figures~\ref{fig:m-3-attraction-contour} and~\ref{fig:m-3-attraction-surface}.  
Figure~\ref{fig:m-3-attraction-contour} displays a heat map of the density evolution, while Figure~\ref{fig:m-3-attraction-surface} gives a 3 dimensional plot showing the height of the density.  Throughout the evolution, one can see the competing effects of aggregation and diffusion.  The heights of the four densities decrease due to diffusion, however aggregation pulls the four separate components together towards the center of the domain.

\begin{figure}[h]
    \begin{minipage}[b]{0.2\linewidth}
      \centering
      \includegraphics[width=.99\linewidth]{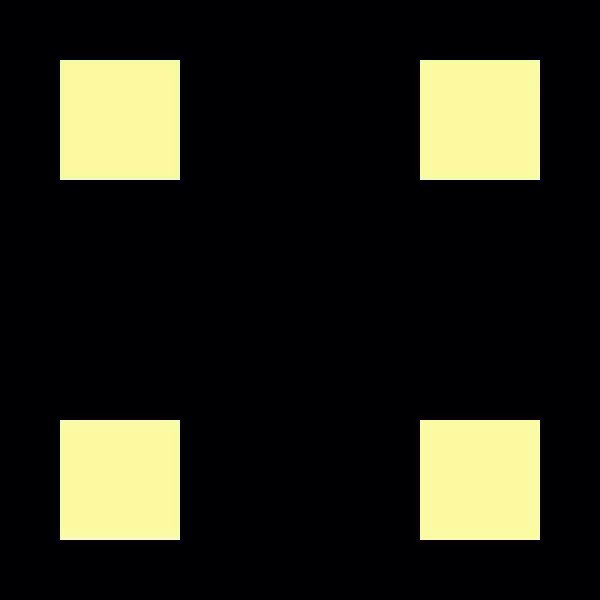}
    \end{minipage}\hfil
    \begin{minipage}[b]{0.2\linewidth}
      \centering
      \includegraphics[width=.99\linewidth]{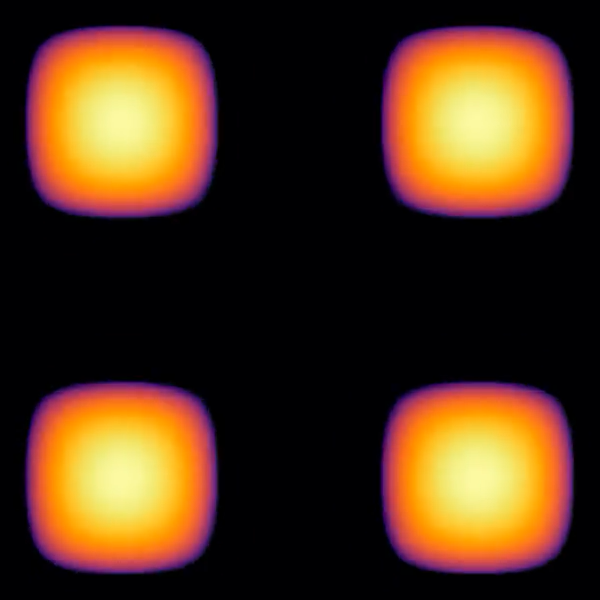}
    \end{minipage}\hfil
    \begin{minipage}[b]{0.2\linewidth}
      \centering
      \includegraphics[width=.99\linewidth]{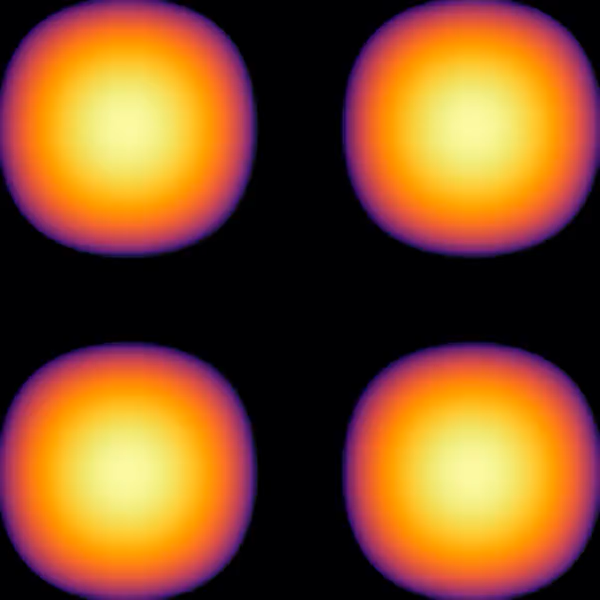}
    \end{minipage}\hfil
    \begin{minipage}[b]{0.2\linewidth}
      \centering
      \includegraphics[width=.99\linewidth]{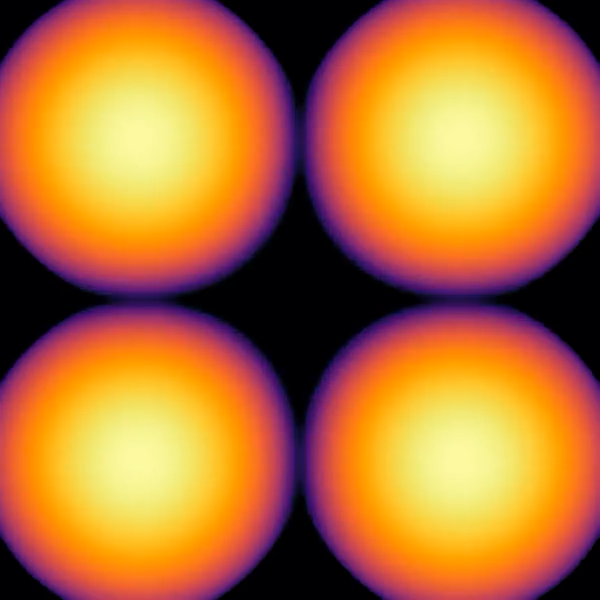}
    \end{minipage}\hfil
    \begin{minipage}[b]{0.2\linewidth}
      \centering
      \includegraphics[width=.99\linewidth]{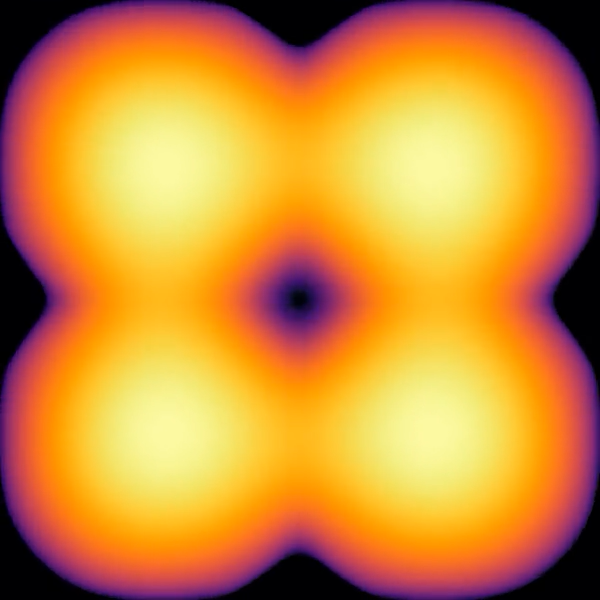}
    \end{minipage}\hfil
    \medskip
    \begin{minipage}[b]{0.2\linewidth}
      \centering
      \includegraphics[width=.99\linewidth]{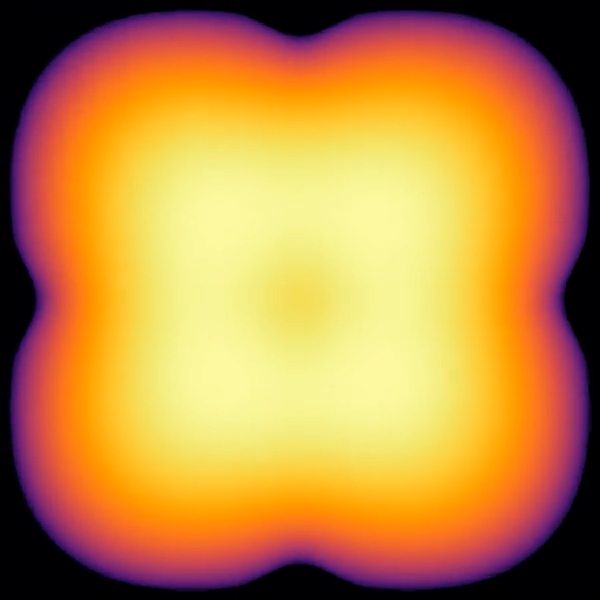}
    \end{minipage}\hfil
    \begin{minipage}[b]{0.2\linewidth}
      \centering
      \includegraphics[width=.99\linewidth]{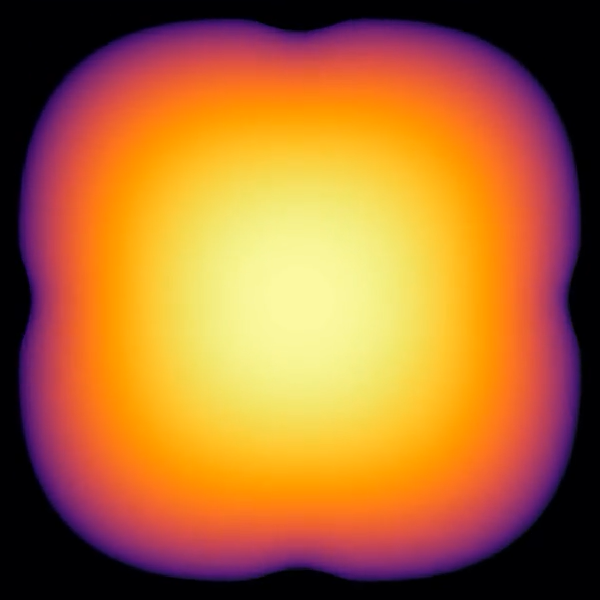}
    \end{minipage}\hfil
    \begin{minipage}[b]{0.2\linewidth}
      \centering
      \includegraphics[width=.99\linewidth]{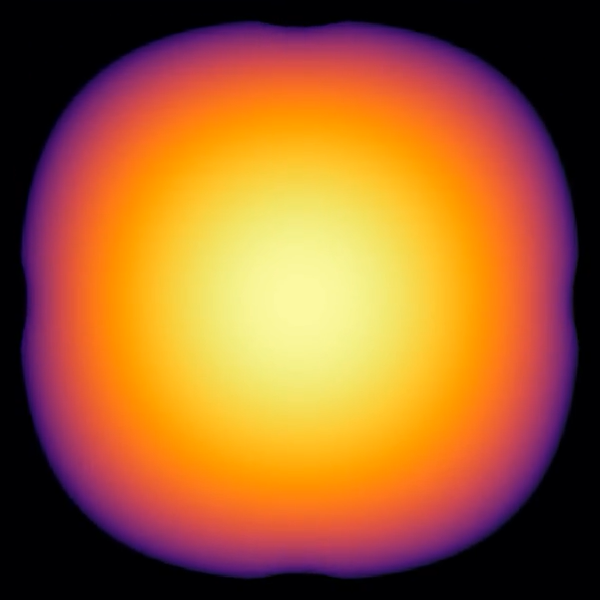}
    \end{minipage}\hfil
    \begin{minipage}[b]{0.2\linewidth}
      \centering
      \includegraphics[width=.99\linewidth]{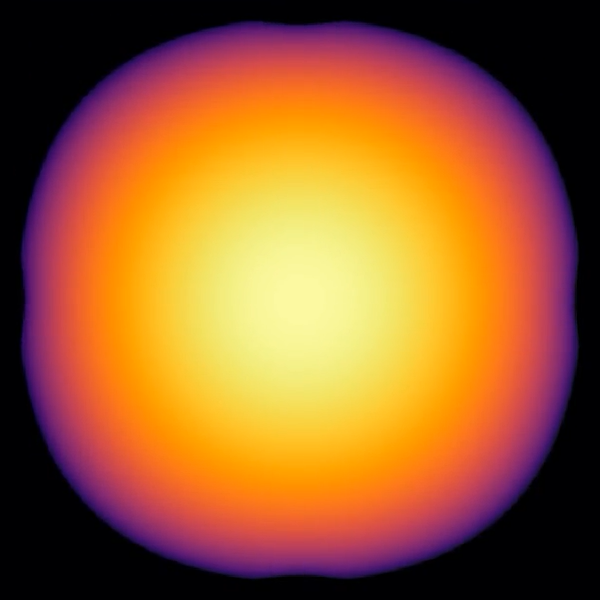}
    \end{minipage}\hfil
    \begin{minipage}[b]{0.2\linewidth}
      \centering
      \includegraphics[width=.99\linewidth]{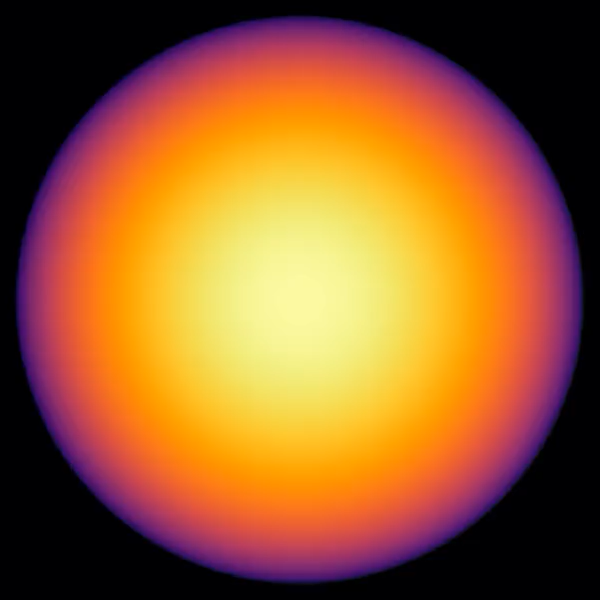}
    \end{minipage}\hfil
    \medskip
    \caption{\footnotesize Aggregation-diffusion equation with an energy given by (\ref{eq:nonconvex_example}).  The images show the evolution from time $t=0$ to $t=10$ (top left to bottom right).  The final image is the approximate steady state. Images are $512\times 512$ pixels. Brighter pixels indicate larger density values.}
    \label{fig:m-3-attraction-contour}
\end{figure}

\begin{figure}[h]
    \begin{minipage}[b]{0.25\linewidth}
      \centering
      \includegraphics[width=.99\linewidth]{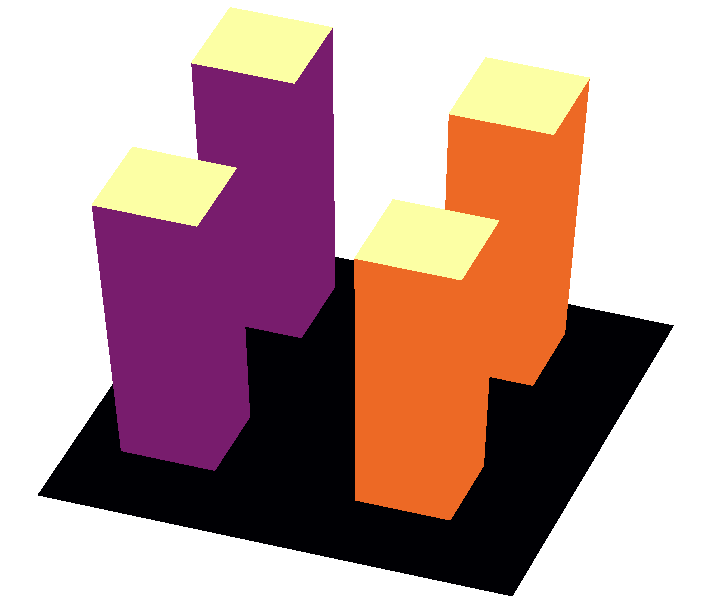}
    \end{minipage}\hfil
    \begin{minipage}[b]{0.25\linewidth}
      \centering
      \includegraphics[width=.99\linewidth]{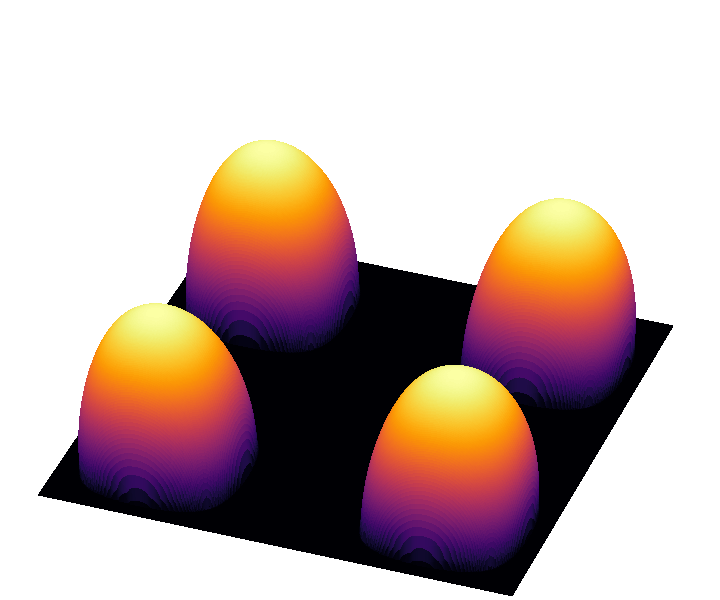}
    \end{minipage}\hfil
    \begin{minipage}[b]{0.25\linewidth}
      \centering
      \includegraphics[width=.99\linewidth]{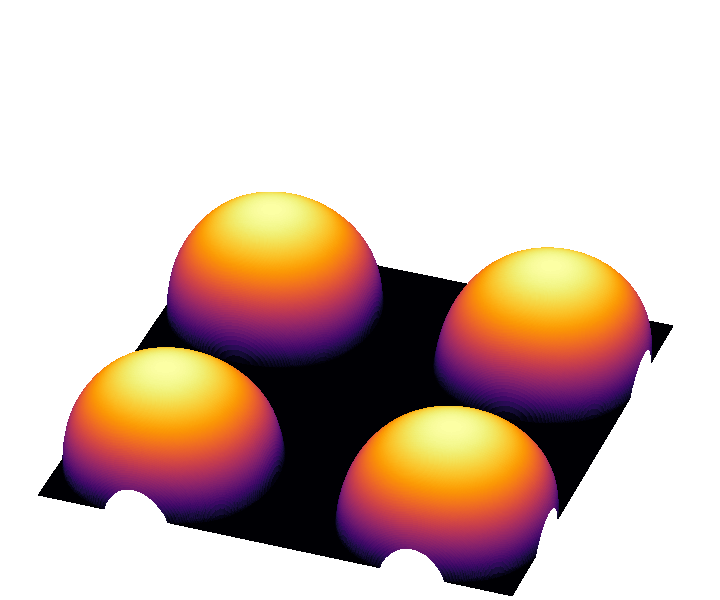}
    \end{minipage}\hfil
    \begin{minipage}[b]{0.25\linewidth}
      \centering
      \includegraphics[width=.99\linewidth]{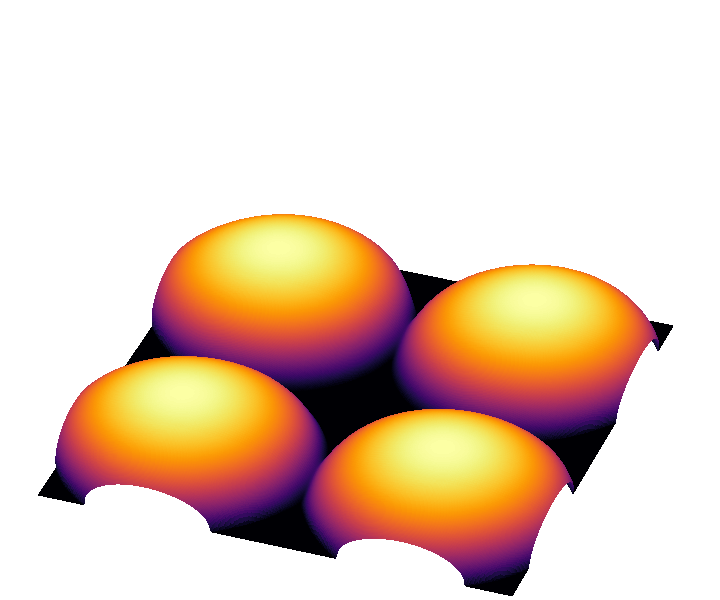}
    \end{minipage}\hfil
    \medskip
    \begin{minipage}[b]{0.25\linewidth}
      \centering
      \includegraphics[width=.99\linewidth]{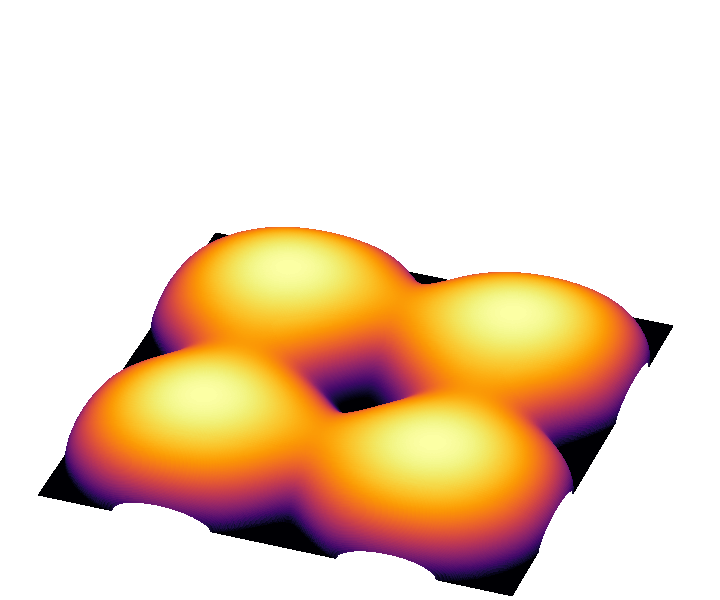}
    \end{minipage}\hfil
    \begin{minipage}[b]{0.25\linewidth}
      \centering
      \includegraphics[width=.99\linewidth]{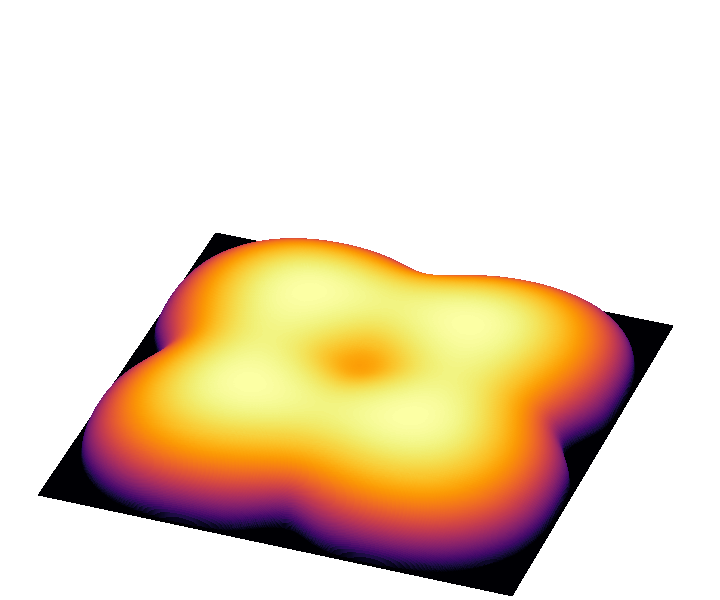}
    \end{minipage}\hfil
    \begin{minipage}[b]{0.25\linewidth}
      \centering
      \includegraphics[width=.99\linewidth]{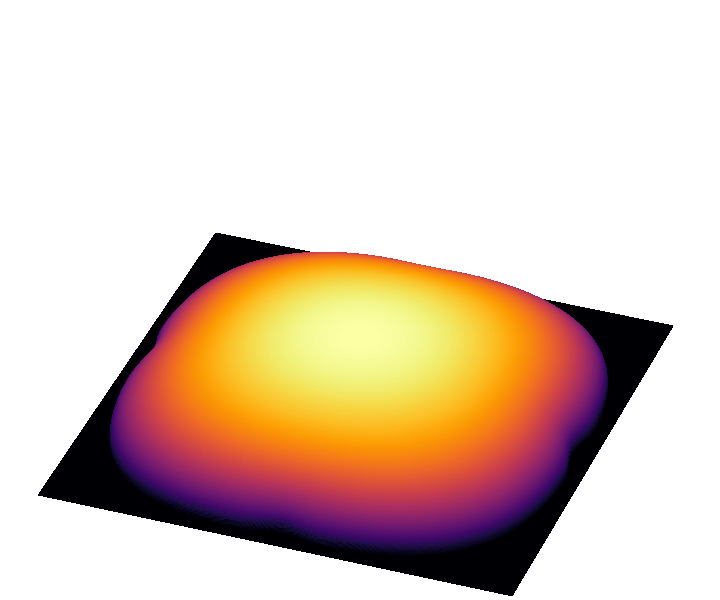}
    \end{minipage}\hfil
    \begin{minipage}[b]{0.25\linewidth}
      \centering
      \includegraphics[width=.99\linewidth]{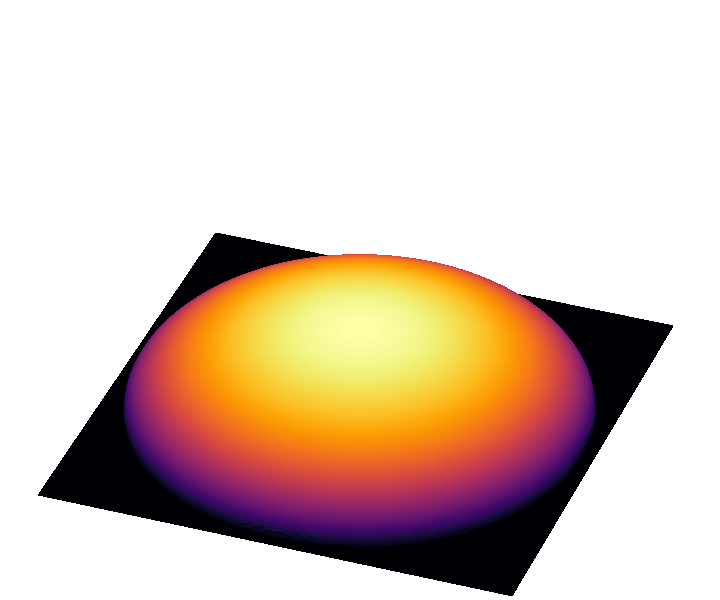}
    \end{minipage}\hfil
    \caption{\footnotesize Aggregation-diffusion equation with an energy given by (\ref{eq:nonconvex_example}).  The images show a 3-d surface plot of the evolution from time $t=0$ to $t=10$ (top left to bottom right).  The final image is the approximate steady state. Images are $512\times 512$ pixels.}
    \label{fig:m-3-attraction-surface}
\end{figure}

\vspace{0.5cm}

\subsubsection{Incompressible projections and flows}\label{subsection:incompressible-exp} In our last set of experiments, we consider incompressible flows, which have applications to crowd motion models and fluid mechanics. Here the energy takes the form
\begin{equation}\label{eq:incompressible_energy}
    U(\rho) = s_\infty(\rho) + \int_\Omega V(x) \rho(x) \, dx,
\end{equation}
where
\begin{equation*}
    s_\infty(\rho) = \begin{cases}
        0 &\text{if } 0\leq \rho(x) \leq 1 \text{ for a.e. } x\in\Omega,\\
        \infty & \text{otherwise},\\
    \end{cases}
\end{equation*}
and $V$ is a fixed potential function.  Note that $s_\infty(\rho)$ can be seen as the limit of the energy
\[
   s_m(\rho)= \frac{1}{m-1} \int_\Omega  \rho^m(x) \, dx
\]
as $m\rightarrow \infty$.

We will run our experiments, using the potential energy
\begin{equation}\label{eq:incompressible_potential}
    V(x) = \frac{1}{2} \bigl( (x_1-\frac{3}{10})^2 + (x_2-\frac{3}{10})^2 \bigr) + \iota_{\Omega\setminus E}(x)
\end{equation}
where $E$ is a closed set that represents an impenetrable obstacle. We run two simulations using two different obstacles
\[
 E_1 = B_{\frac{1}{4}}(\frac{1}{5},-\frac{1}{5}) \cup B_{\frac{1}{4}}(-\frac{1}{5},\frac{1}{5}) 
 \]
 and
 \[
 E_2 = B_{\frac{1}{10}}(0,\frac{1}{5}) \cup B_{\frac{1}{10}}(0,-\frac{1}{5}) \cup B_{\frac{1}{10}}(\frac{1}{5},0) \cup B_{\frac{1}{10}}(-\frac{1}{5},0),
\]
 where $B_r(x_1,x_2)$ denotes the closed ball of radius $r$ centered at $(x_1,x_2)$. 
In both experiments, we choose an initial density $\rho^{(0)}$, which equals $1$ on a ball of a radius $0.15$ centered at $(-0.3,-0.3)$ and is equal to $0$ elsewhere.   

The results of our experiments are displayed in Figures~\ref{fig:incompressible-case} and~\ref{fig:incompressible-case-around-ball}.  Figure~\ref{fig:incompressible-case} uses the obstacle $E_1$, while Figure~\ref{fig:incompressible-case-around-ball} uses the obstacle $E_2$.  
In the figures, the yellow pixels represent the density $\rho^{(n)}$ and white pixels represents the obstacle. In both experiments we use a time step $\tau = 0.05$ and run the evolution from time $t=0$ to time $t=20$.   Both experiments are conducted on $1024 \times 1024$ pixel grids.

Notably, in both of the simulations depicted in Figures~\ref{fig:incompressible-case} and~\ref{fig:incompressible-case-around-ball},  there is a sharp interface separating the regions $\rho=1$ and $\rho=0$.  This matches the expected behavior of the flow with our chosen potentials.  In general, it is difficult for numerical methods to correctly capture sharp interfaces.  Again, the reason that our method is able to do so is because of our dual approach.   By recovering the density through the duality relation $\rho^{(n+1)}\in \delta U^*(\phi^{(n+1)})$ we automatically produce a discontinuity at the level set $\{y\in\Omega: \phi^{(n+1)}(y)-V(y)=0\}$.

\begin{figure}[h]
    \begin{minipage}[b]{0.2\linewidth}
      \centering
      \includegraphics[width=.99\linewidth]{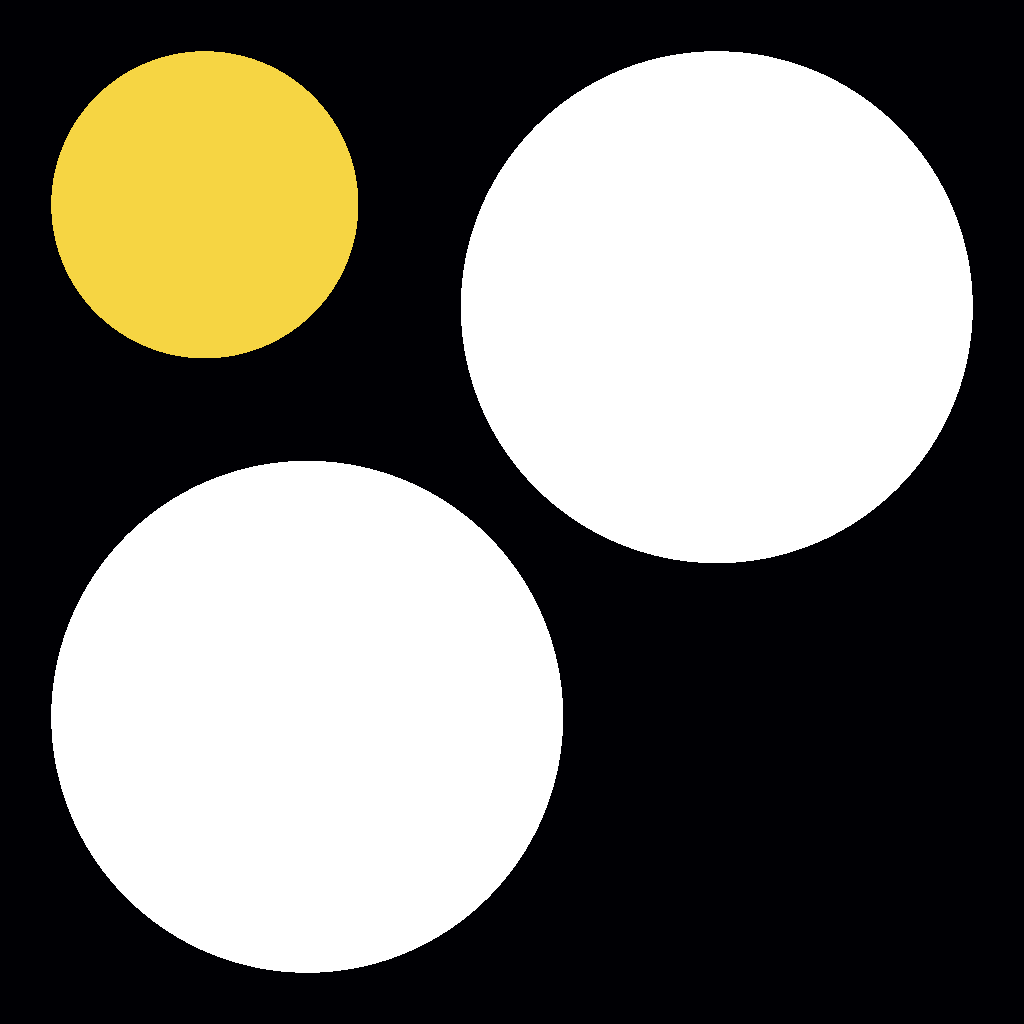}
    \end{minipage}\hfil
    \begin{minipage}[b]{0.2\linewidth}
      \centering
      \includegraphics[width=.99\linewidth]{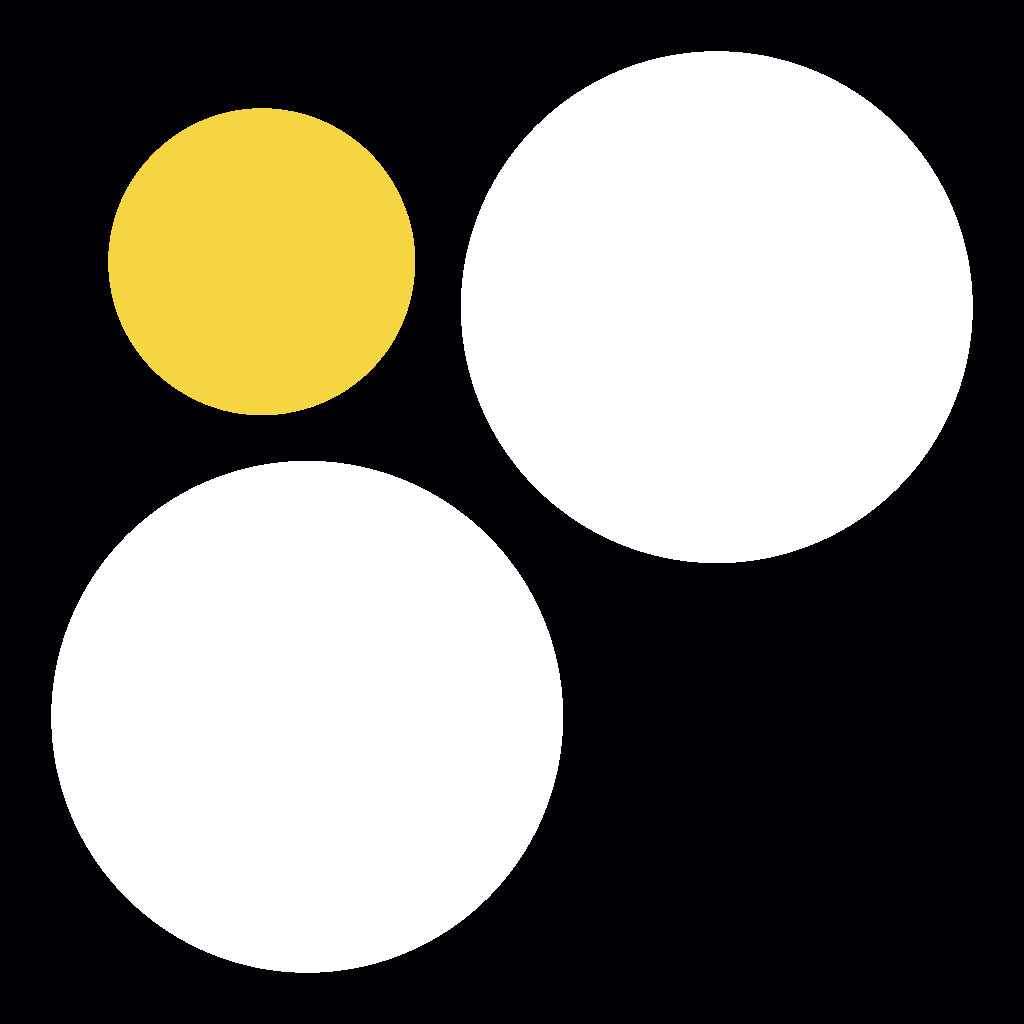}
    \end{minipage}\hfil
    \begin{minipage}[b]{0.2\linewidth}
      \centering
      \includegraphics[width=.99\linewidth]{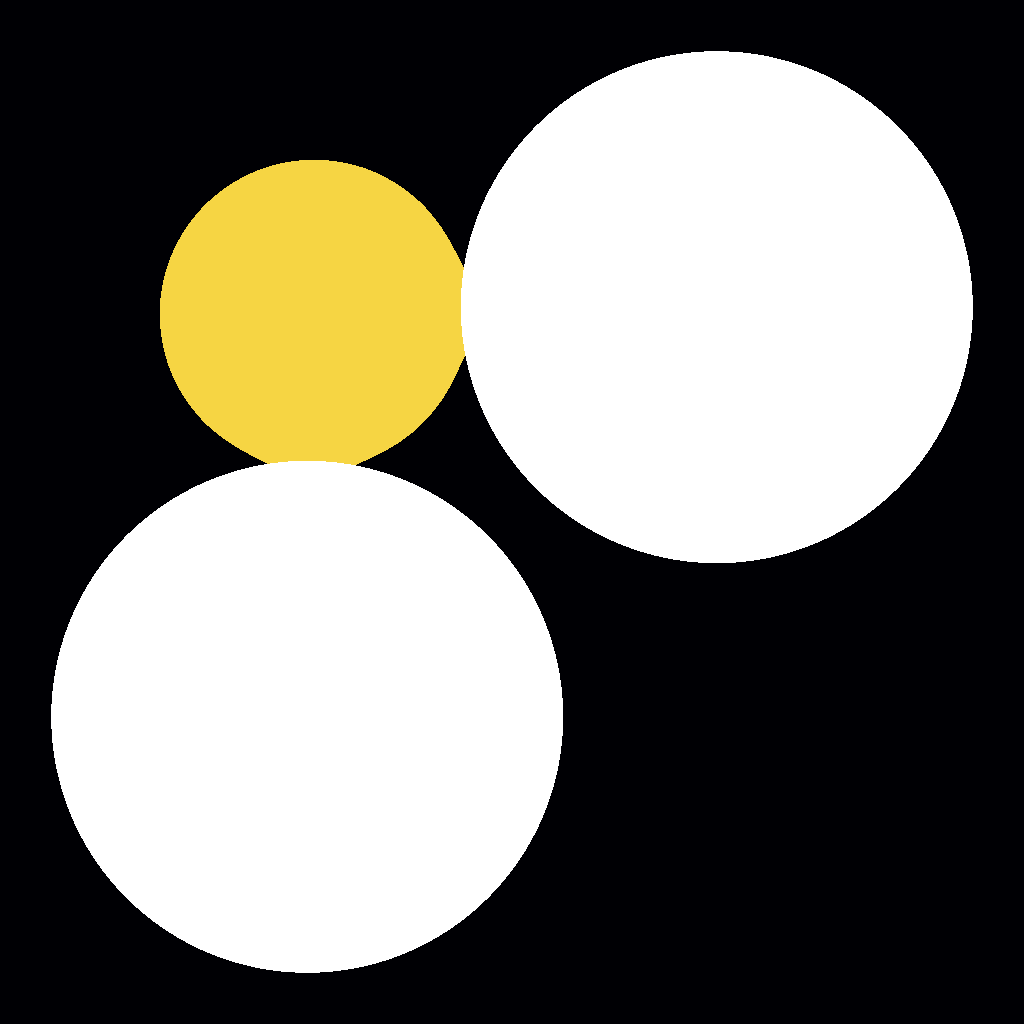}
    \end{minipage}\hfil
    \begin{minipage}[b]{0.2\linewidth}
      \centering
      \includegraphics[width=.99\linewidth]{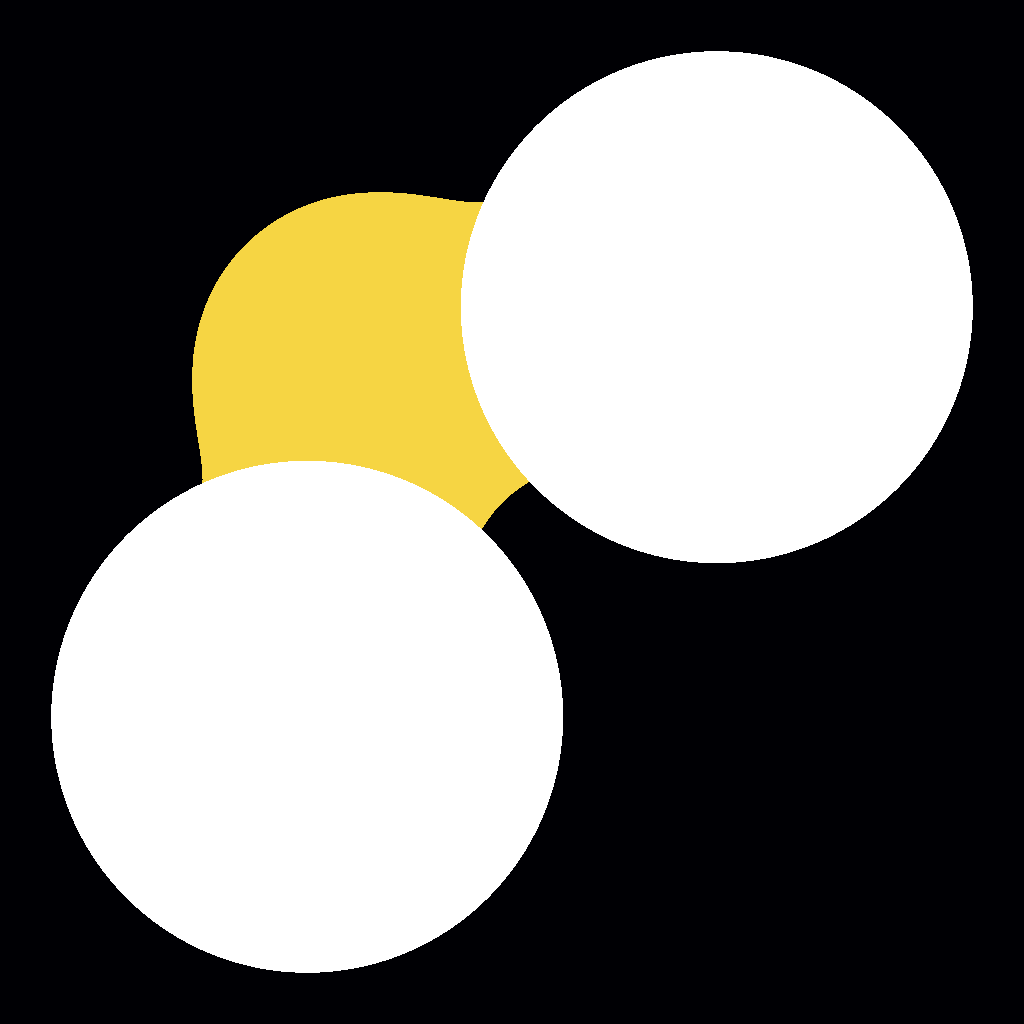}
    \end{minipage}\hfil
    \begin{minipage}[b]{0.2\linewidth}
      \centering
      \includegraphics[width=.99\linewidth]{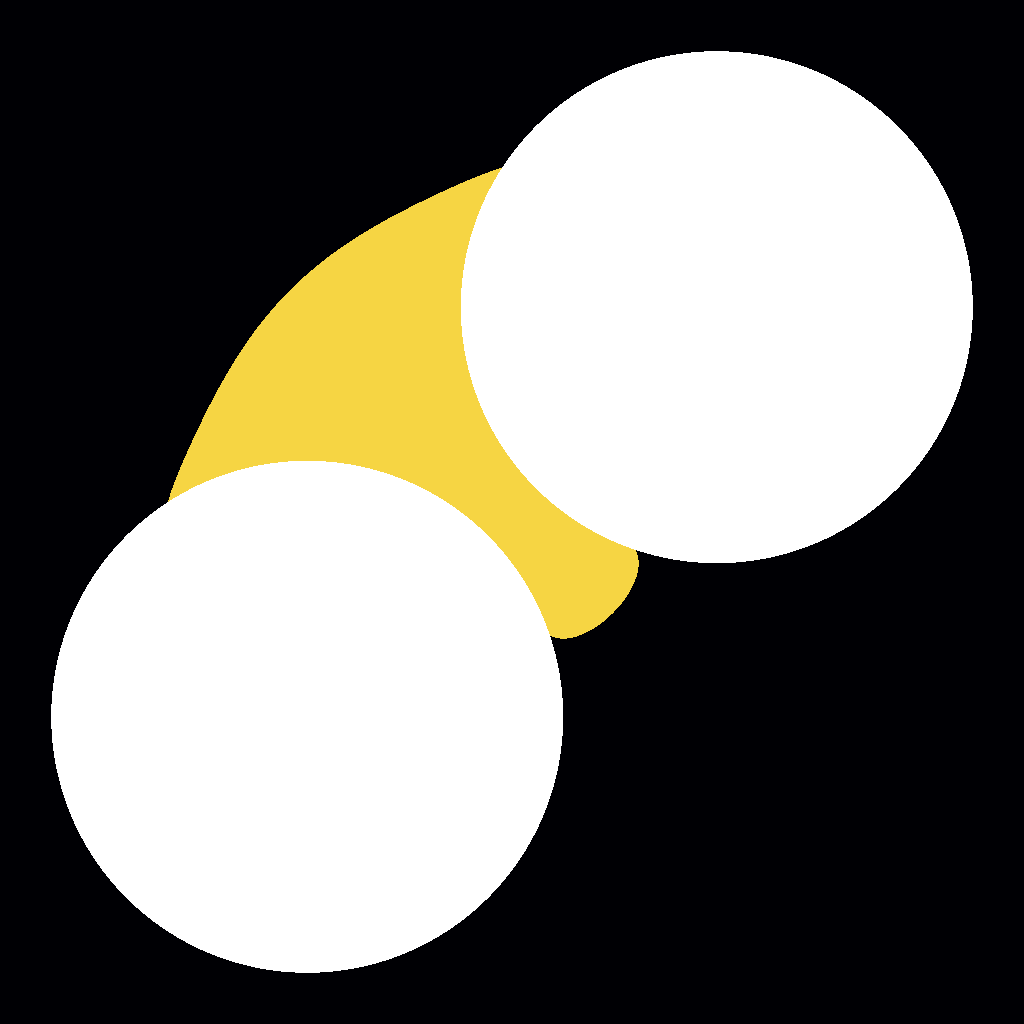}
    \end{minipage}\hfil
    \medskip
    \begin{minipage}[b]{0.2\linewidth}
      \centering
      \includegraphics[width=.99\linewidth]{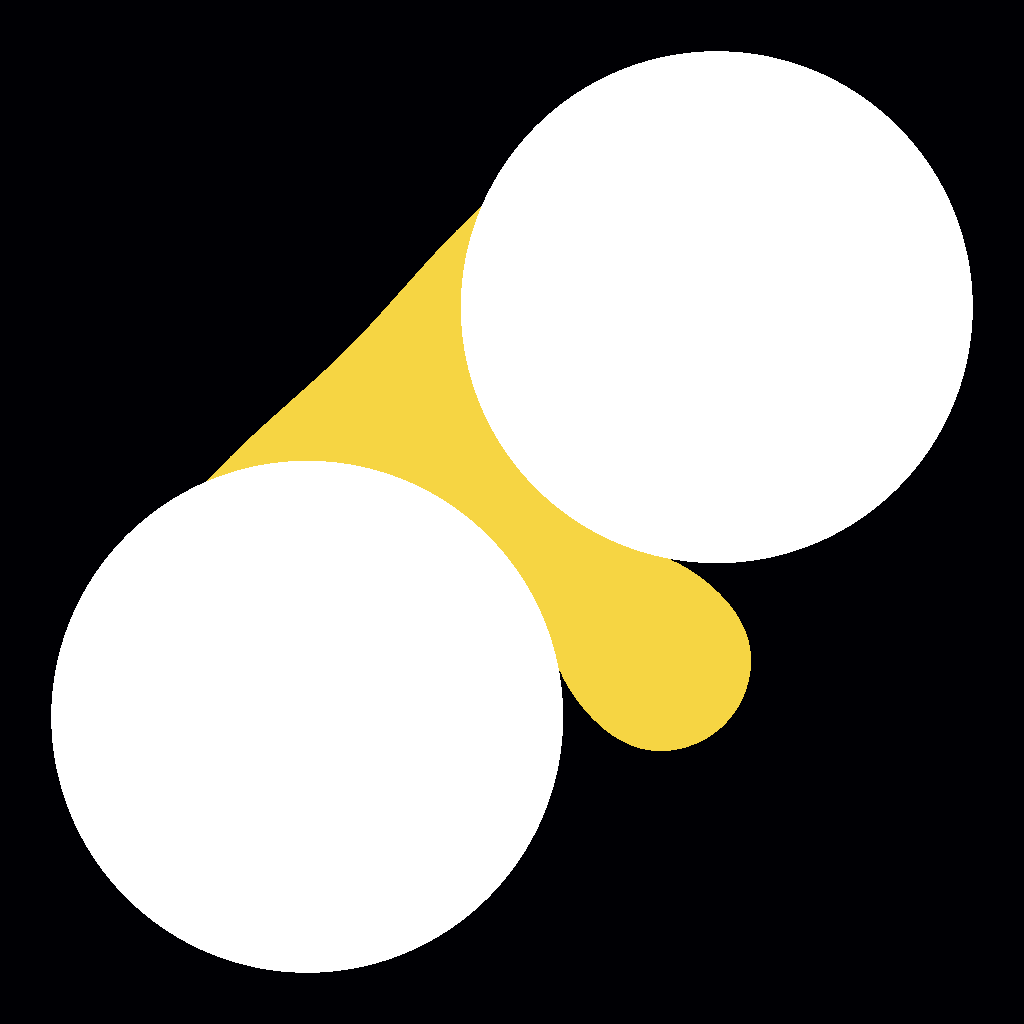}
    \end{minipage}\hfil
    \begin{minipage}[b]{0.2\linewidth}
      \centering
      \includegraphics[width=.99\linewidth]{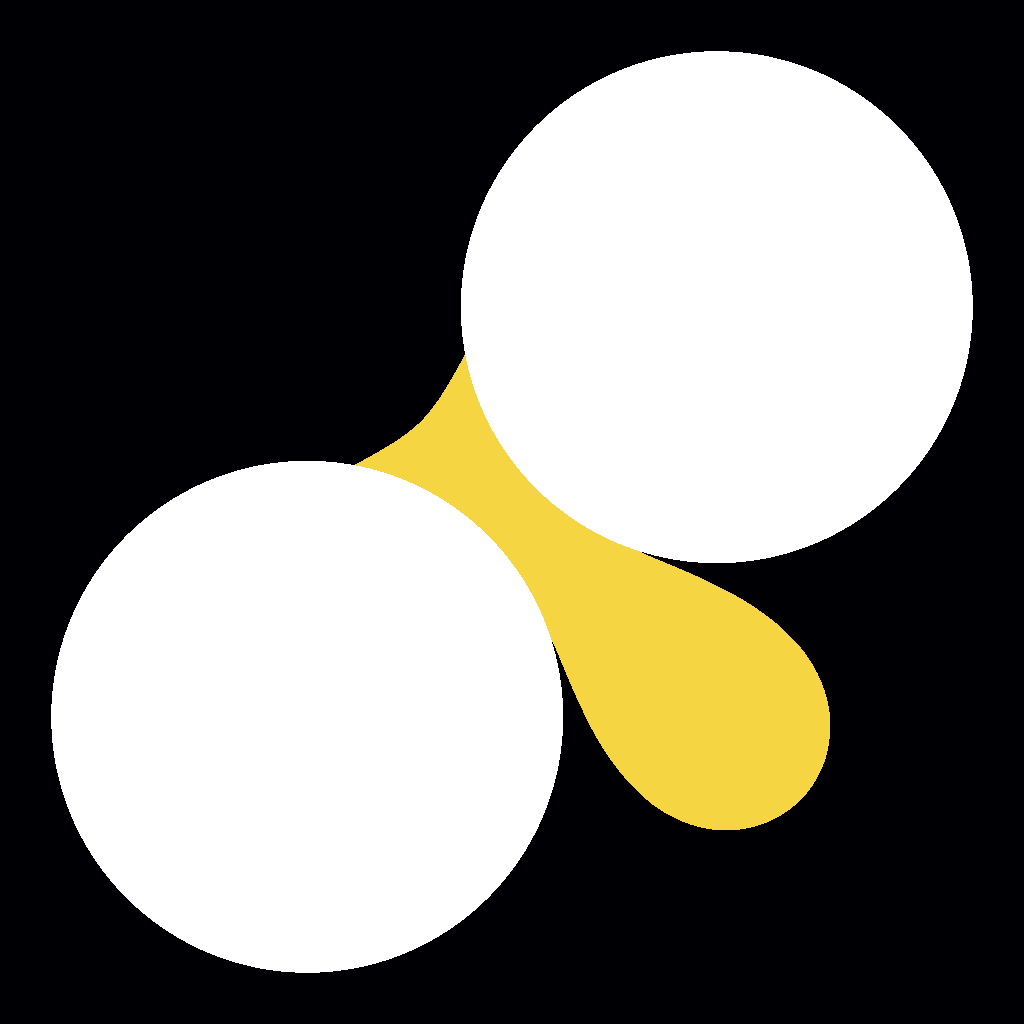}
    \end{minipage}\hfil
    \begin{minipage}[b]{0.2\linewidth}
      \centering
      \includegraphics[width=.99\linewidth]{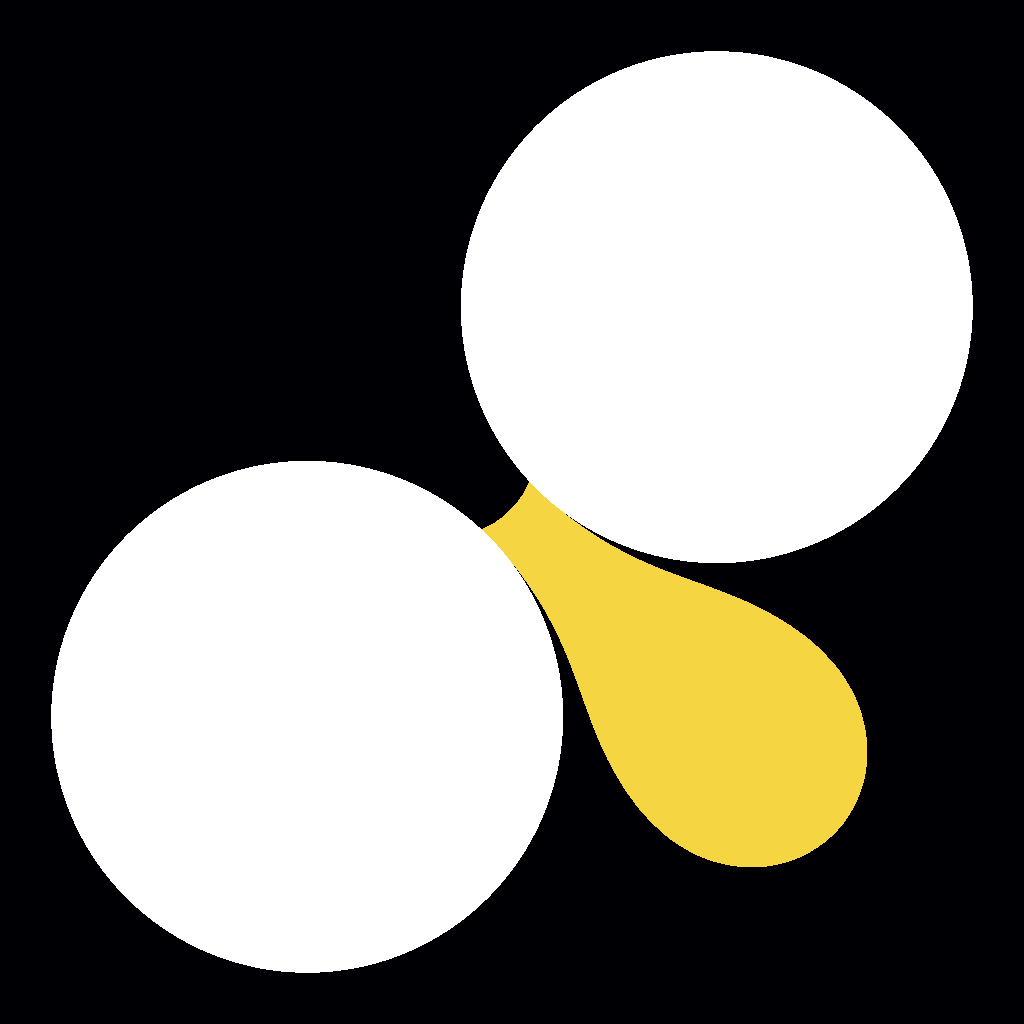}
    \end{minipage}\hfil
    \begin{minipage}[b]{0.2\linewidth}
      \centering
      \includegraphics[width=.99\linewidth]{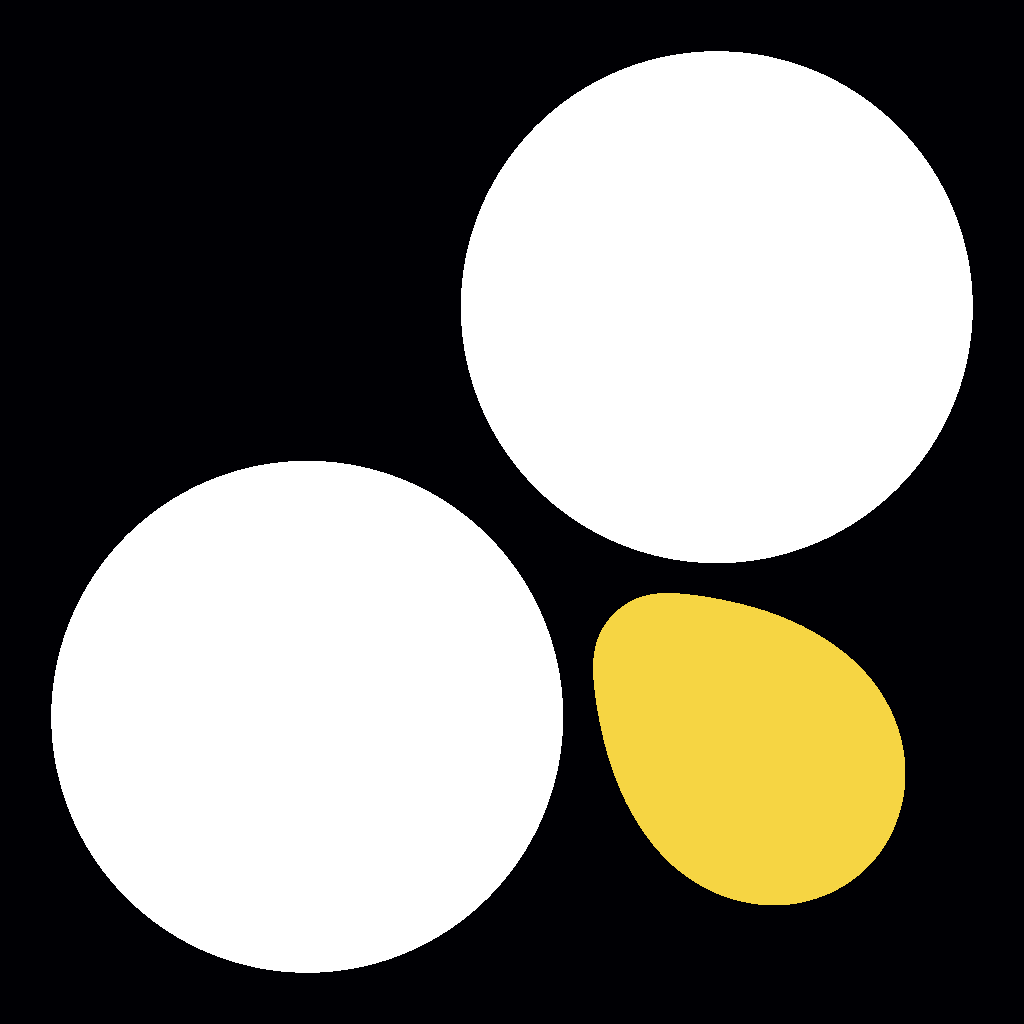}
    \end{minipage}\hfil
    \begin{minipage}[b]{0.2\linewidth}
      \centering
      \includegraphics[width=.99\linewidth]{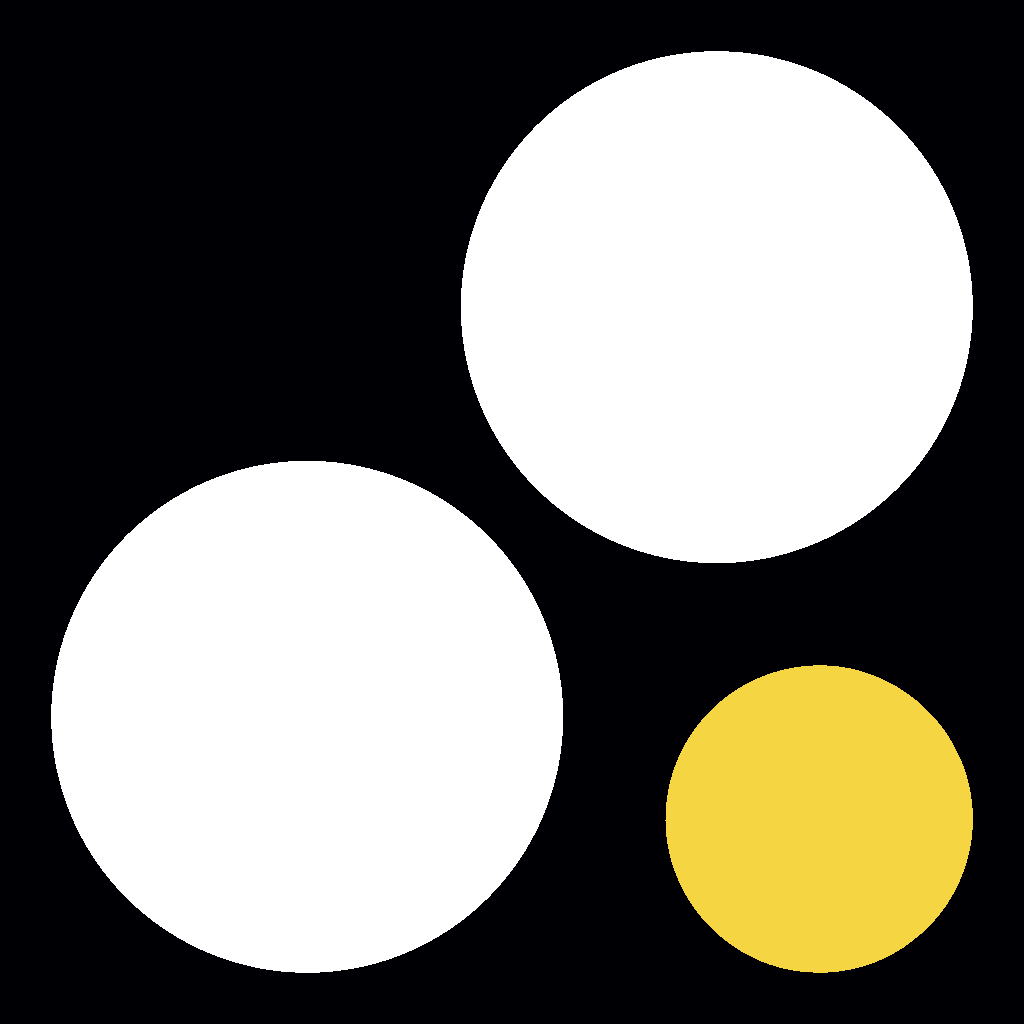}
    \end{minipage}\hfil
    \medskip
    \caption{\footnotesize Incompressible flow with the energy (\ref{eq:incompressible_energy}),  potential (\ref{eq:incompressible_potential}), and obstacle $E_1$.  The images show the evolution from time $t=0$ to $t=20$ (top left to bottom right).  The final image is the approximate steady state. Images are $1024\times 1024$ pixels. Yellow pixels represents the density and white pixels represents the obstacle.}
    \label{fig:incompressible-case}
\end{figure}

\begin{figure}[h]
    \begin{minipage}[b]{0.2\linewidth}
      \centering
      \includegraphics[width=.99\linewidth]{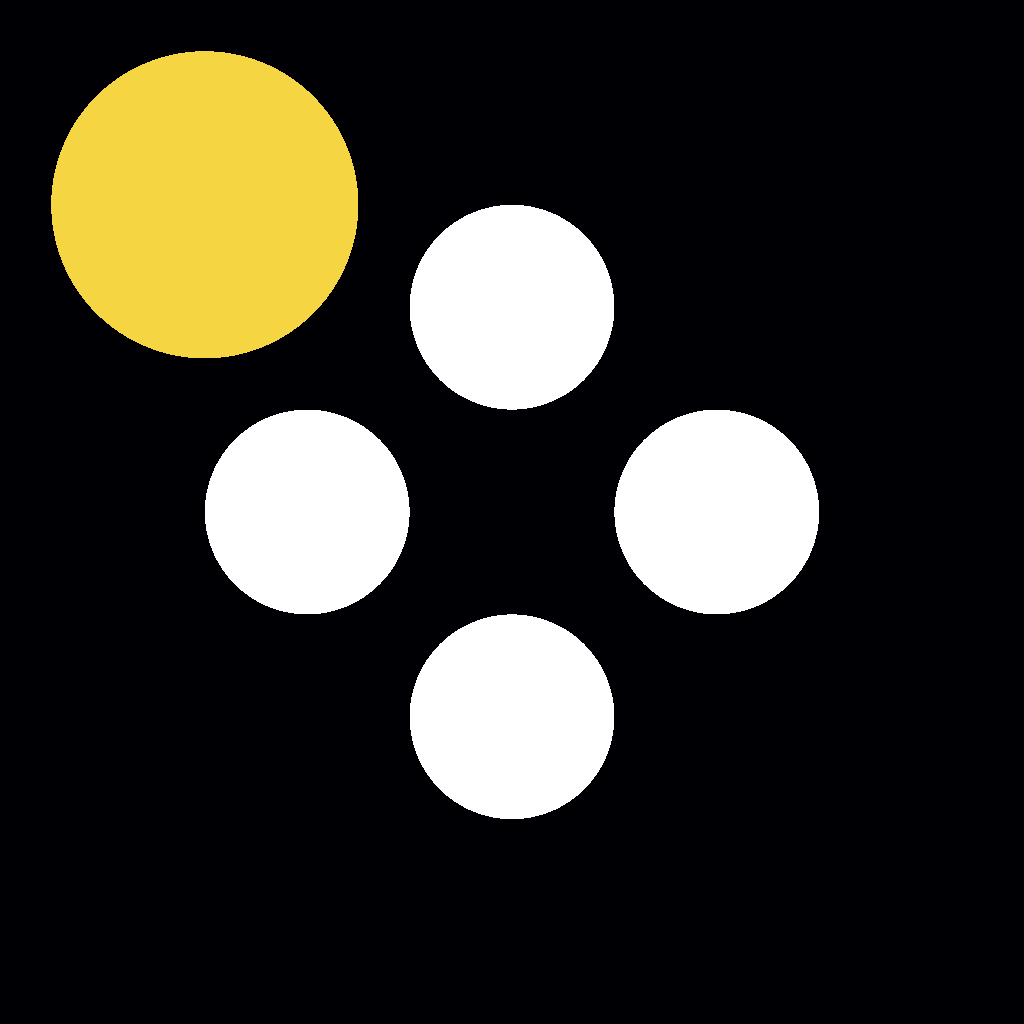}
    \end{minipage}\hfil
    \begin{minipage}[b]{0.2\linewidth}
      \centering
      \includegraphics[width=.99\linewidth]{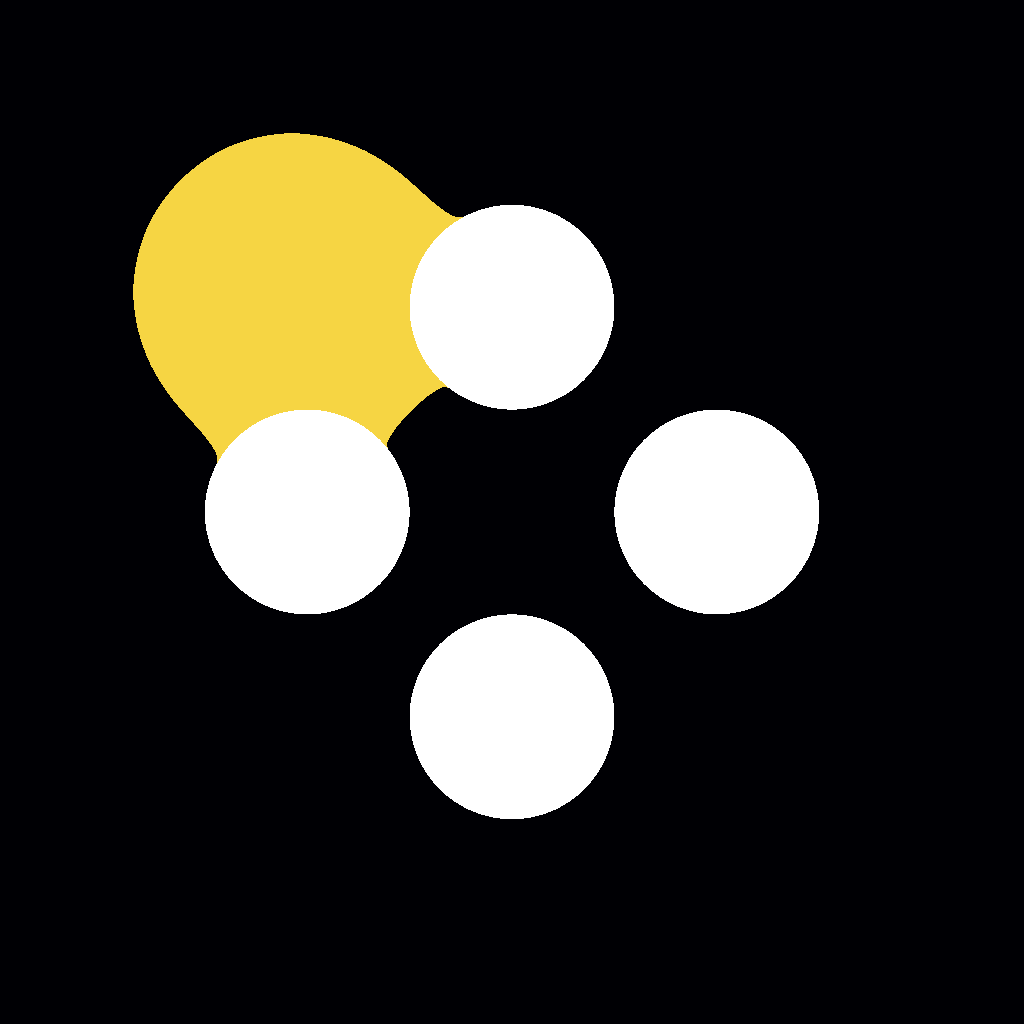}
    \end{minipage}\hfil
    \begin{minipage}[b]{0.2\linewidth}
      \centering
      \includegraphics[width=.99\linewidth]{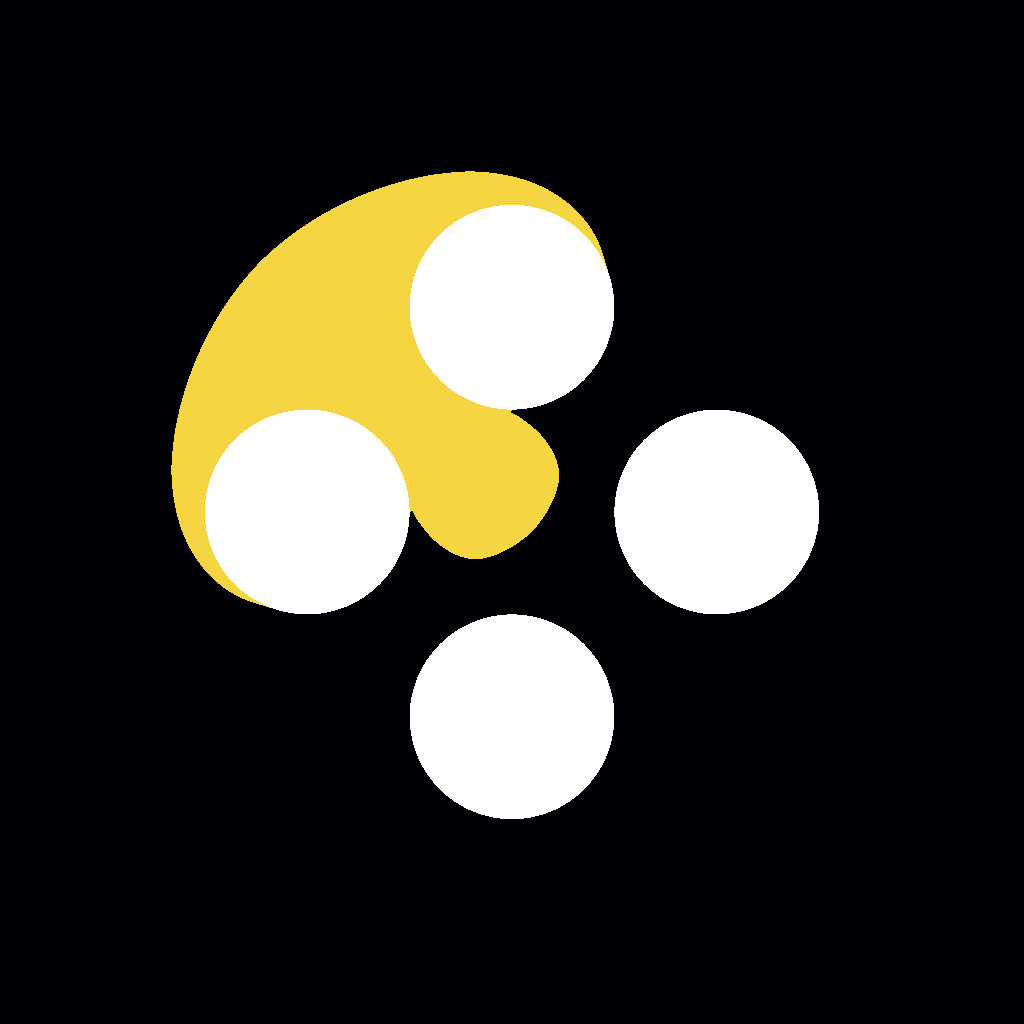}
    \end{minipage}\hfil
    \begin{minipage}[b]{0.2\linewidth}
      \centering
      \includegraphics[width=.99\linewidth]{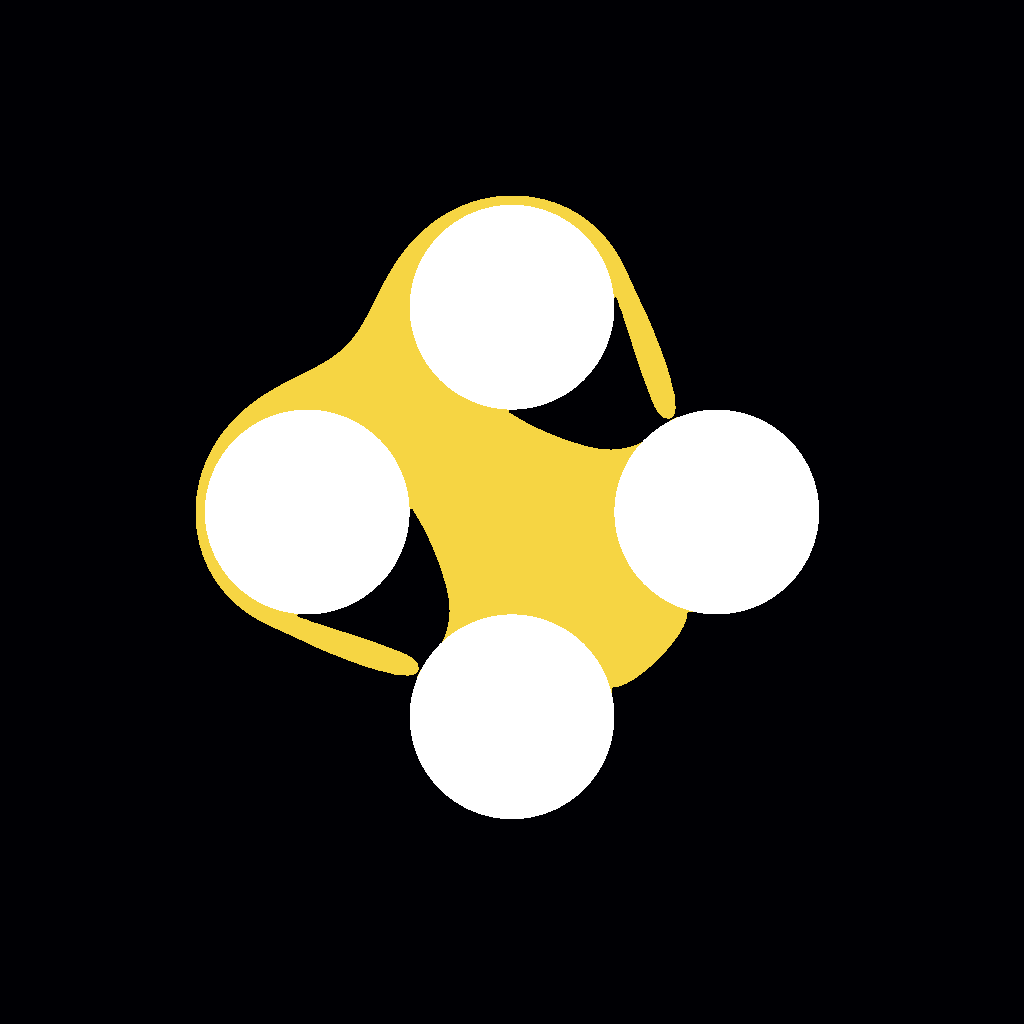}
    \end{minipage}\hfil
    \begin{minipage}[b]{0.2\linewidth}
      \centering
      \includegraphics[width=.99\linewidth]{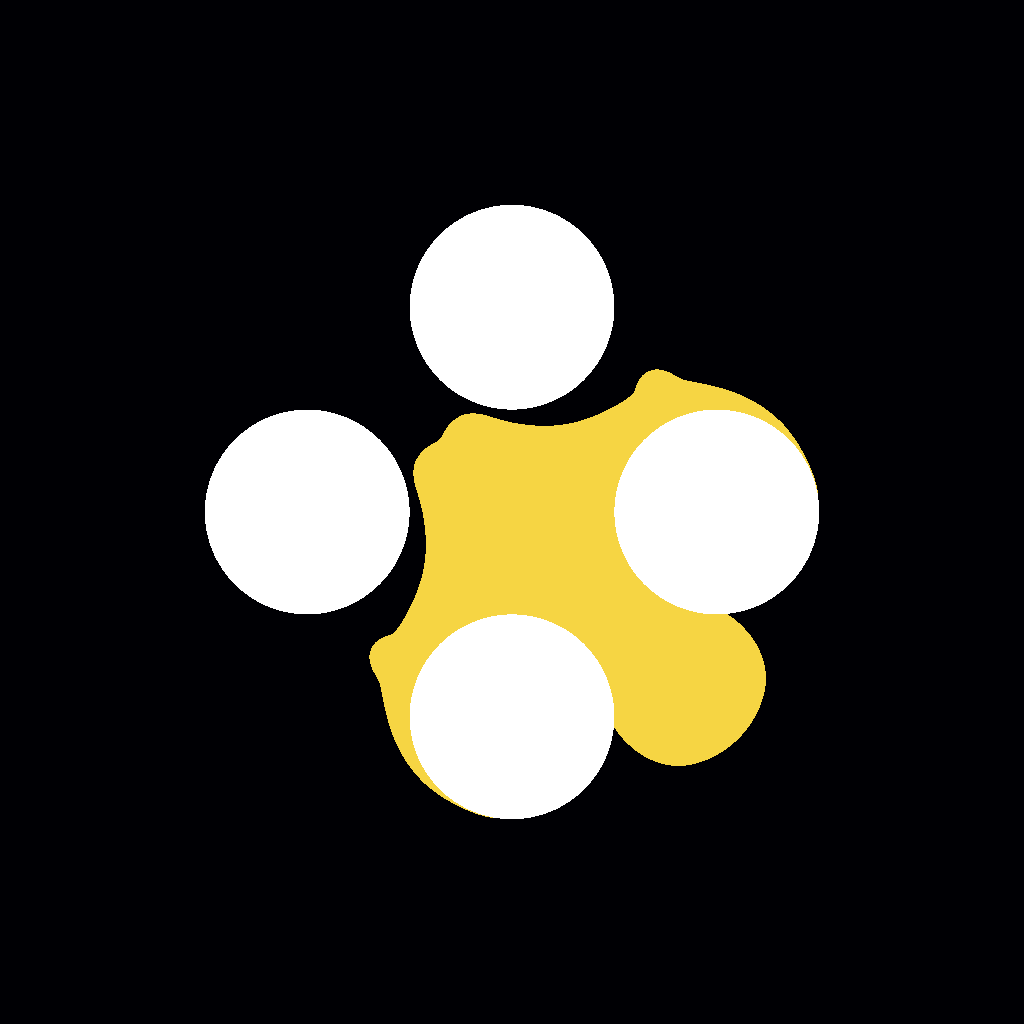}
    \end{minipage}\hfil
    \medskip
    \begin{minipage}[b]{0.2\linewidth}
      \centering
      \includegraphics[width=.99\linewidth]{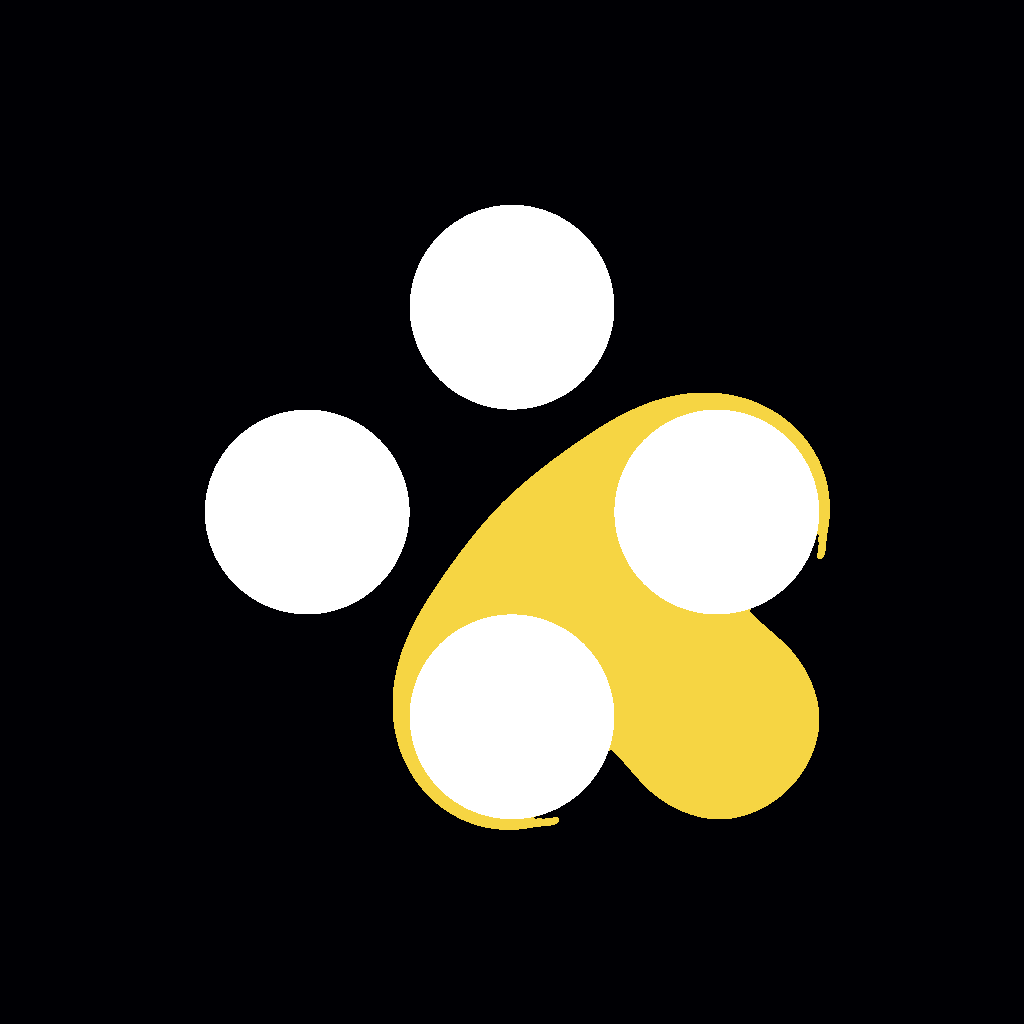}
    \end{minipage}\hfil
    \begin{minipage}[b]{0.2\linewidth}
      \centering
      \includegraphics[width=.99\linewidth]{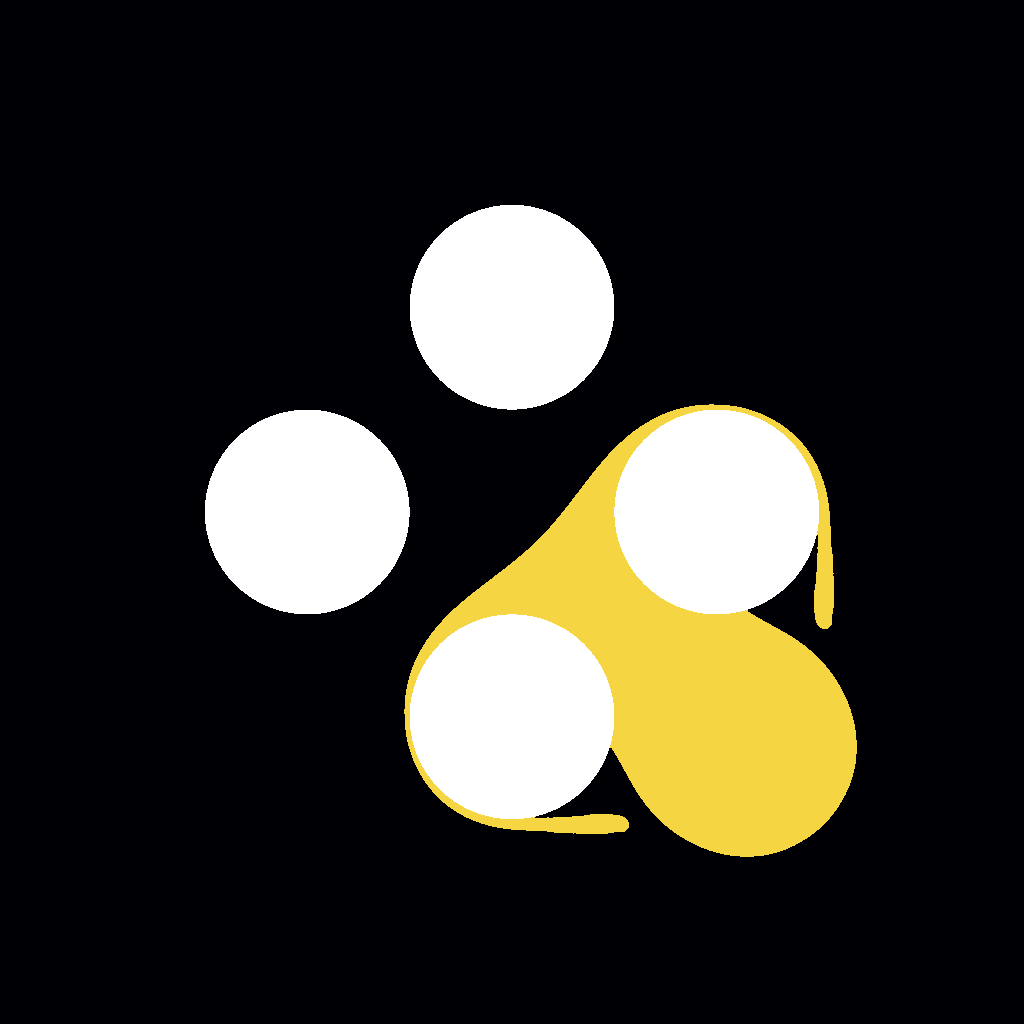}
    \end{minipage}\hfil
    \begin{minipage}[b]{0.2\linewidth}
      \centering
      \includegraphics[width=.99\linewidth]{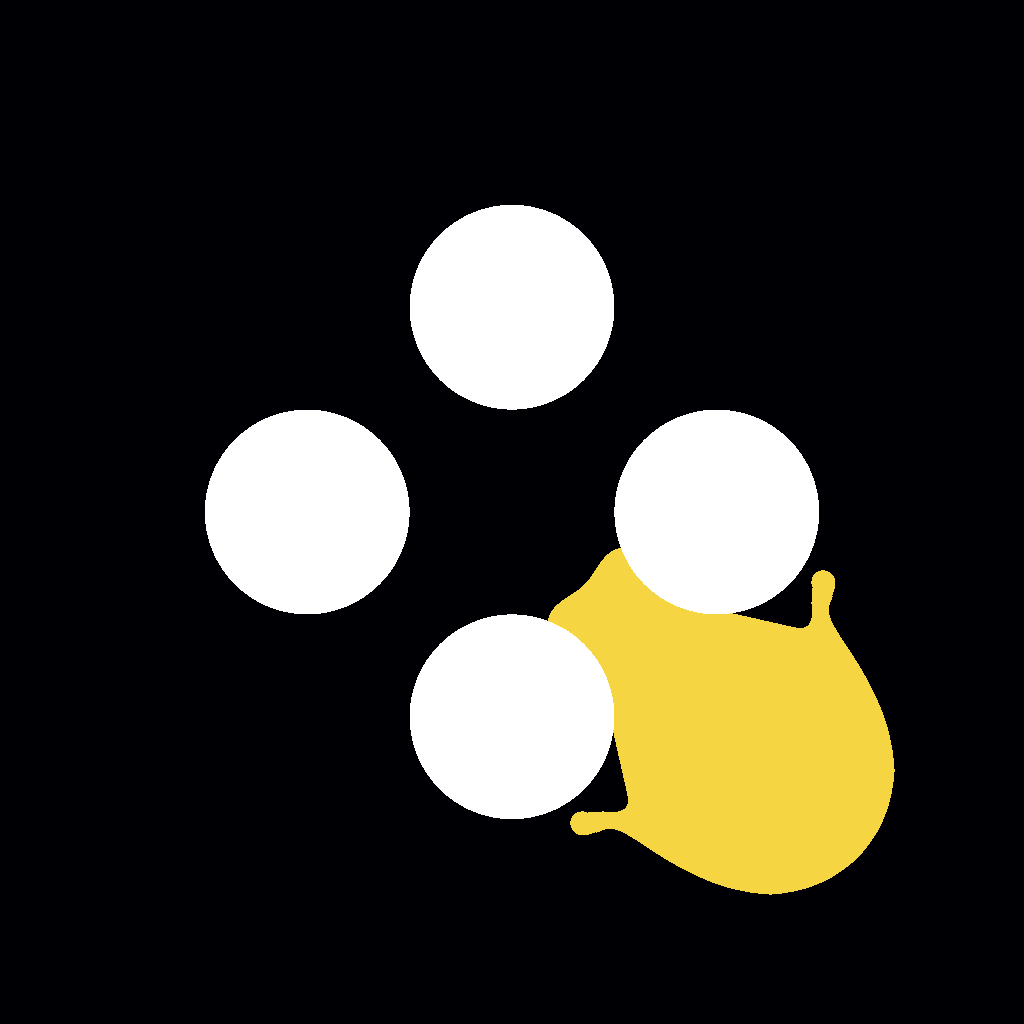}
    \end{minipage}\hfil
    \begin{minipage}[b]{0.2\linewidth}
      \centering
      \includegraphics[width=.99\linewidth]{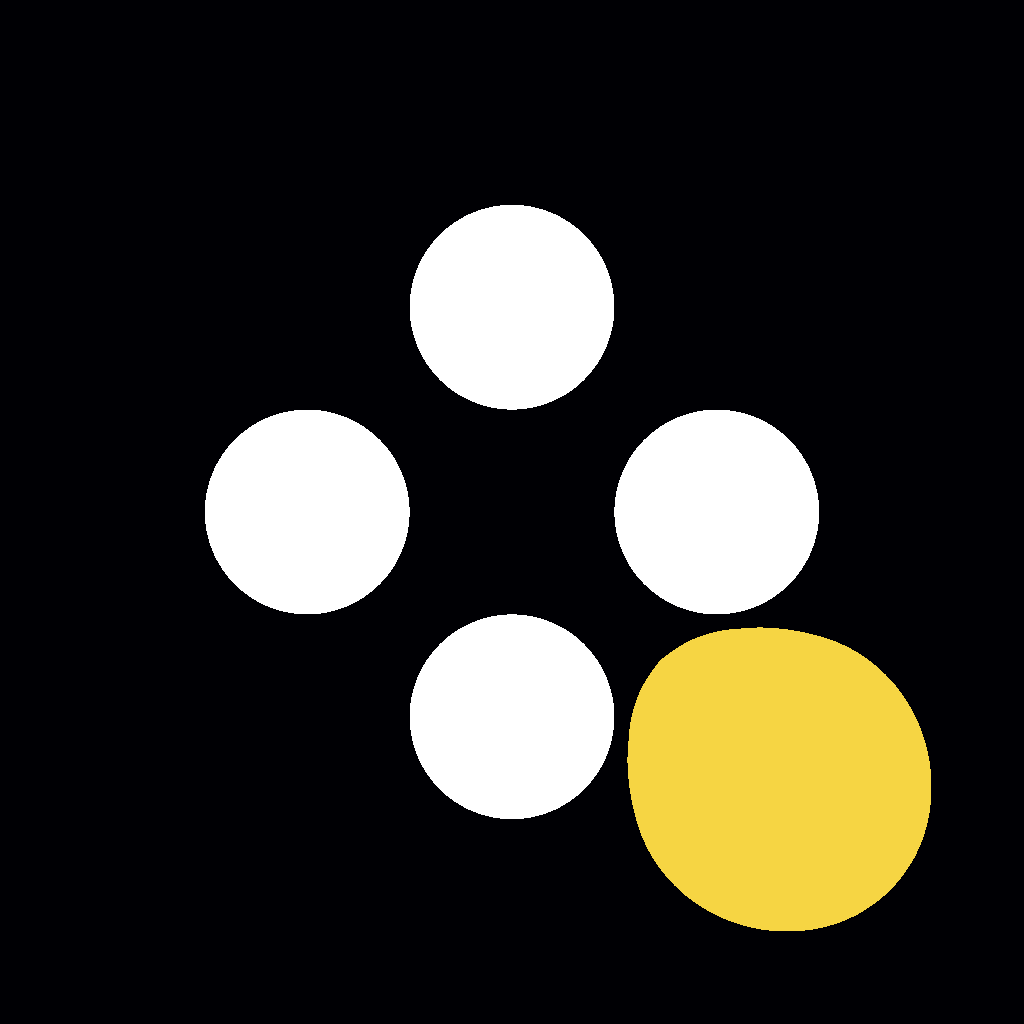}
    \end{minipage}\hfil
    \begin{minipage}[b]{0.2\linewidth}
      \centering
      \includegraphics[width=.99\linewidth]{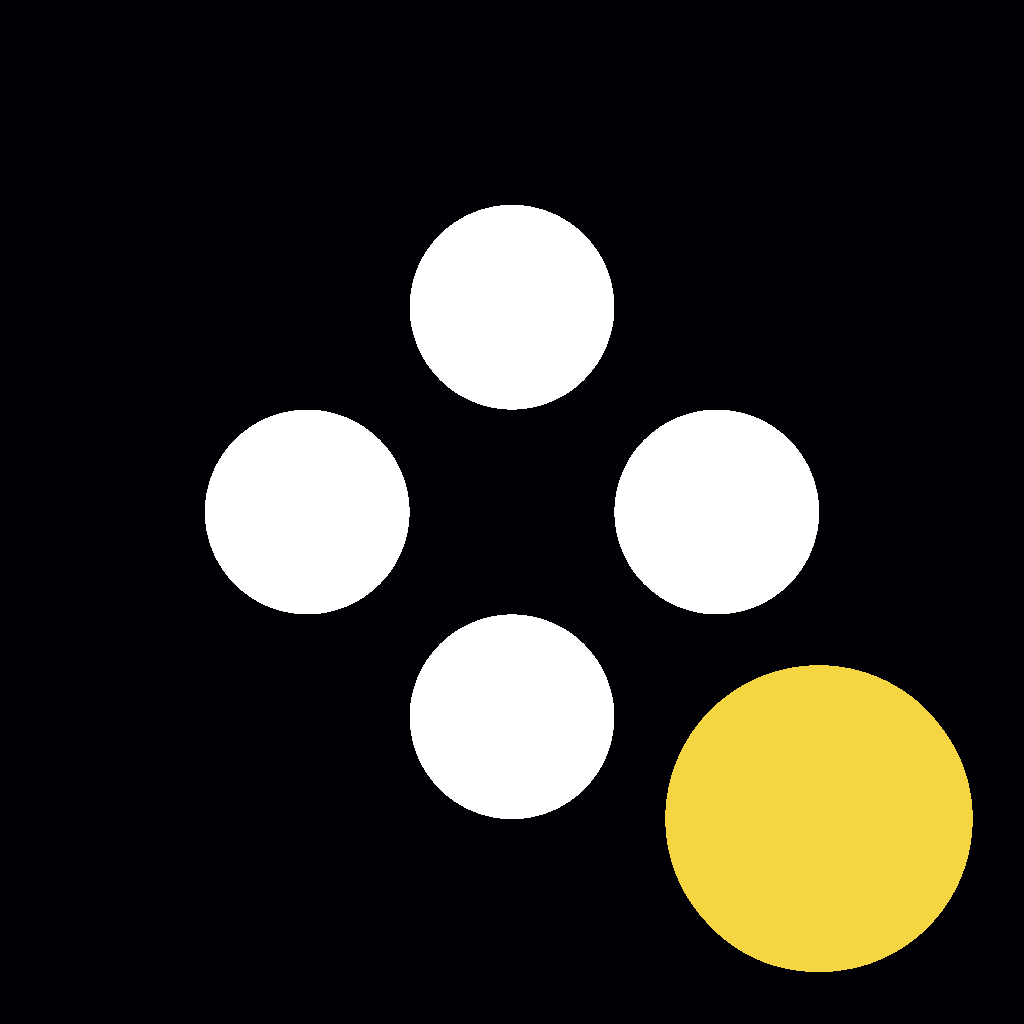}
    \end{minipage}\hfil
    \medskip
    \caption{\footnotesize Incompressible flow with the energy (\ref{eq:incompressible_energy}),  potential (\ref{eq:incompressible_potential}), and obstacle $E_2$.  The images show the evolution from time $t=0$ to $t=20$ (top left to bottom right).  The final image is the approximate steady state. Images are $1024\times 1024$ pixels. Yellow pixels represents the density and white pixels represents the obstacle.}
    \label{fig:incompressible-case-around-ball}
\end{figure}

\appendix

\section{Proofs}

\begin{proof}[Proof of Lemma~\ref{lemma:hessian-F}]
\leavevmode

\textit{Step 1: Derivation of the Hessian.} In order to obtain the Hessian of $F$ let us start with the first derivative. We have
\[
F(\phi+h)-F(\phi)=\int_\Om \big[(\phi+h)^c(x)-\phi^c(x)\big]\,\mu(x)dx.
\]
Assume that $\phi$ is $c$-convex. Then Proposition~\ref{prop:c_transform_variation} tells us how to differentiate the $c$-transform, so that we may write
\[
\int_\Om \big[(\phi+h)^c(x)-\phi^c(x)\big]\,\mu(x)dx = \int_\Om h(T_\phi(x))\,\mu(x)dx + o(h).
\]
Therefore $\delta F(\phi)(h)=\int_\Om h(T_\phi(x))\,\mu(x)dx$. To derive the Hessian of $F$ we similarly compute
\[
\delta F(\phi+h)(h)-\delta F(\phi)(h) = \int_\Om \big[h(T_{\phi+h}(x))-h(T_\phi(x))\big]\,\mu(x)dx.
\]
We must now differentiate the maps $T_\phi$ with respect to $\phi$. By Proposition~\ref{prop:c_transform_variation}  we know that $T_\phi(x)=x-\tau\nabla\phi^c(x)$. As a consequence
\begin{align*}
    T_{\phi+h}(x)-T_\phi(x) &= -\tau \nabla [(\phi+h)^c-\phi^c](x) \\
    &= -\tau \nabla (h\circ T_\phi)(x)+o(h) \\
    &= -\tau DT_\phi(x)^T\nabla h(T_\phi(x)) + o(h).
\end{align*}
Note that $DT_\phi=I_{d\times d}-\tau D^2\phi^c$ is a symmetric matrix. We deduce from the above computations that
\begin{multline*}
   \delta F(\phi+h)(h)-\delta F(\phi)(h) = \\
   \int_\Om \nabla h(T_\phi(x))\cdot (-\tau)DT_\phi(x)\nabla h(T_\phi(x))\,\mu(x)dx + o(h), 
\end{multline*}
from which we conclude that 
\begin{equation*}
    \delta^2F(\phi)(h,h) = -\tau \int_\Om \nabla h(T_\phi(x))\cdot DT_\phi(x)\nabla h(T_\phi(x))\,\mu(x)dx.
\end{equation*}
Since our goal is to bound this Hessian by a norm of $h$ we do the change of variable $y=T_\phi(x)$, or equivalently $x=S_{\phi^c}(y)$ since $S_{\phi^c}$ is the inverse of $T_\phi$, see Proposition~\ref{prop:c_transform_variation}. We obtain 
\[
\delta^2F(\phi)(h,h) = -\tau \int_\Om \nabla h(y)\cdot DT_\phi(S_{\phi^c}(y))\nabla h(y)\,\mu(S_{\phi^c}(y)) \det DS_{\phi^c}(y)dy.
\]
Note that $DS_{\phi^c}$ is a positive semi-definite matrix and therefore no absolute value is needed on the determinant term. Moreover we have $DT_\phi(S_{\phi^c}(y))=DS_{\phi^c}(y)^{-1}$ and putting this term together with the determinant we can form the cofactor matrix $\cof(DS)=\det (DS) DS^{-1}$. As a result we obtain the expression
\[
\delta^2F(\phi)(h,h) = -\tau \int_\Om \nabla h(y)\cdot \cof(DS_{\phi^c}(y))\nabla h(y)\,\mu(S_{\phi^c}(y))dy.
\]

\textit{Step 2: Hessian bounds.} Since $\phi$ is $c$-convex, $\phi=\phi^{c\bar c}$ and therefore $S_{\phi^c}(y)=y+\tau\nabla \phi(y)$. The $c$-convexity of $\phi$ also implies that the symmetric matrix $DS_{\phi^c}(y)=I_{d\times d}+\tau D^2\phi(y)$ is positive semi-definite. Assume now that $I_{d\times d}+\tau D^2\phi(y)\le\Lambda \,I_{d\times d}$ for all $y\in\Om$. Then $I_{d\times d}+\tau D^2\phi(y)$ is a symmetric matrix with eigenvalues between $0$ and $\Lambda$. By general properties of the cofactor matrix the eigenvalues of $\cof(DS_{\phi^c}(y))$ lie between $0$ and $\Lambda^{d-1}$ where $d$ is the space dimension. We immediately deduce 
\[
-\delta^2F(\phi)(h,h) \le \tau \Lambda^{d-1} \normlinf{\mu} \int_\Om \abs{\nabla h(y)}^2\,dy.
\]

\end{proof}

\begin{lemma}\label{lem:trace_dist}
Suppose that $E\subset \R^d$ is a bounded set with $C^2$ boundary and let $R:=\textrm{Reach}(\partial E)$.
Let $u_0:\R^d\to \R$ be a solution to the Eikonal equation $|\nabla u_i|=1$ where $u_0<0$ inside $E$ and $u_0>0$ outside $E$ and set $u_1=-u_0$. Let 
\[
E_r^0=\{x\in \R^d: u_0(x)\in (0, r)\}.
\]
and 
\[
E_r^1=\{x\in \R^d: u_1(x)\in (0, r)\}.
\]
If $g:\R^d\to\R$ is a smooth function, then for $i=0,1$
\[
\int_{\partial E} |g(x)|ds(x)\leq \inf_{0<r<R}
\Big(\int_{E_r^{i}} |\nabla g(x)|\, dx+C_i(E,r)\int_{E_r^{i}} |g(x)|\, dx\Big)
\]
where 
\[
C_i(E,r)=\inf_{0<r'<r} \frac{1}{r'}+\sup_{x\in E_{r'}^i} (\Delta u_i(x))_+
\]
\end{lemma}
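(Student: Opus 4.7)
The plan is to establish the trace inequality by a divergence theorem argument applied to the tubular neighbourhood $E_{r'}^i$ for $0<r'<r$, with a vector field tailored to the signed distance $u_i$. As a preliminary, I would assume $g\ge 0$ without loss of generality: the inequality is first proved for smooth nonnegative $g$, and then extended to signed $g$ by replacing $g$ with the smooth approximation $g_\epsilon := \sqrt{g^2+\epsilon^2}$ (which satisfies $|\nabla g_\epsilon|\le |\nabla g|$) and sending $\epsilon\to 0^+$. The hypothesis $r<R=\operatorname{Reach}(\partial E)$ together with $\partial E\in C^2$ guarantees that $u_i$ is $C^{1,1}$ on $E_{r'}^i$, so $\Delta u_i\in L^\infty(E_{r'}^i)$ and the computations below are legitimate.

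Fix $r'\in(0,r)$, introduce the piecewise-linear cutoff $\chi(t):=(1-t/r')_+$, and apply the divergence theorem to the vector field $X=g\,\chi(u_i)\,\nabla u_i$ on $E_{r'}^i$. The boundary of $E_{r'}^i$ splits into $\partial E=\{u_i=0\}$ (on which $\chi(u_i)=1$ and the outward unit normal to $E_{r'}^i$ is $-\nabla u_i$) and $\{u_i=r'\}$ (on which $\chi(u_i)=0$), so only $\partial E$ contributes:
\[
-\int_{\partial E} g\,ds \,=\, \int_{E_{r'}^i}\!\operatorname{div}(X)\,dx \,=\, \int_{E_{r'}^i}\!\Bigl(\chi(u_i)\,\nabla g\cdot\nabla u_i \;-\; \tfrac{g}{r'} \;+\; g\,\chi(u_i)\,\Delta u_i\Bigr)dx,
\]
using $|\nabla u_i|=1$ and $\chi'(u_i)=-1/r'$ on the tube. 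Applying $|\nabla g\cdot\nabla u_i|\le|\nabla g|$, $0\le\chi(u_i)\le 1$, and bounding the Laplacian term by the relevant one-sided part of $\Delta u_i$ gives
\[
\int_{\partial E} g\,ds \,\le\, \int_{E_{r'}^i}|\nabla g|\,dx \;+\; \Bigl(\tfrac{1}{r'}+\sup_{E_{r'}^i}(\Delta u_i)_{+}\Bigr)\int_{E_{r'}^i} g\,dx.
\]
Enlarging the integration domain from $E_{r'}^i$ to $E_r^i$ (valid since $r'\le r$ and the integrands are nonnegative) and then taking the infimum over $r'\in(0,r)$ of the coefficient in front of $\int_{E_r^i}g\,dx$ produces precisely $C_i(E,r)$, which yields the claimed bound.

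The main technical subtlety will be tracking the sign conventions: the term $g\,\chi(u_i)\,\Delta u_i$ in the divergence must be bounded by the correct one-sided part of $\Delta u_i$, which depends on whether $\nabla u_i$ plays the role of the inward or outward normal on $\partial E$ (and therefore on the choice $i\in\{0,1\}$ and the orientation of the signed distance). Apart from that, the proof is routine: the $g\mapsto|g|$ step is handled cleanly by the $g_\epsilon$ approximation, and the $C^{1,1}$ regularity of $u_i$ on $E_{r'}^i$ needed to apply the divergence theorem with $\Delta u_i\in L^\infty$ is provided exactly by the reach condition $r'<R$.
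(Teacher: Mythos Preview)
Your approach is essentially the same as the paper's. Both arguments apply the divergence theorem to a vector field of the form $g\cdot(\text{linear cutoff of the signed distance})\cdot\nabla u_i$; your cutoff $\chi(t)=(1-t/r')_+$ on the shell $E_{r'}^i$ plays the same role as the paper's primitive $\alpha_r$ (with $\alpha_r'$ a ramp from $0$ to $1$) applied on $E$ or on $\R^d\setminus E$. Since the paper's integrand is supported precisely on the shell, the two setups are equivalent. Your treatment of $|g|$ via the smooth approximation $g_\epsilon=\sqrt{g^2+\epsilon^2}$ is in fact a bit more careful than the paper, which differentiates $|g|$ directly.

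One remark on the step you correctly flag as delicate: after rearranging your identity you get
\[
\int_{\partial E} g\,ds = \int_{E_{r'}^i}\!\Bigl(-\chi(u_i)\,\nabla g\cdot\nabla u_i + \tfrac{g}{r'} - g\,\chi(u_i)\,\Delta u_i\Bigr)dx,
\]
so the Laplacian contribution is $-g\chi\Delta u_i$, which is bounded above by $g\,(\Delta u_i)_-$ rather than $g\,(\Delta u_i)_+$. The paper's proof actually arrives at $(\Delta u_0)_+$ on the \emph{opposite} shell (it integrates over $E$ for the ``$i=0$'' case, landing on the interior tube), and since $u_0=-u_1$ this is the same quantity as $(\Delta u_1)_-$. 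In other words, the sign convention and the choice of shell are coupled; your method recovers exactly what the paper's argument yields once this pairing is kept straight. This is precisely the bookkeeping issue you anticipated, and it is the only point that needs care.
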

\begin{remark}
The reach of $\partial E$ is the largest number $r$ such that the characteristics of $u_0$ do not cross in $E_r^0\cup E_r^1$.  When $\partial E$ is $C^2$, the reach must be strictly positive and the Laplacian $\Delta u$ must be bounded on $E_r^0\cup E_r^1$ for all $r$ smaller than the reach of $\partial E$.    
\end{remark}
\begin{remark}
If $E$ is a convex set, then  $C_1(E,r)=\frac{1}{r}$.
\end{remark}

\begin{proof}
Note that if $x\in \partial E$ and $n(x)$ is the outward facing normal at $x$, then $\nabla u_0(x)=n(x)$.  Therefore,
\[
\int_{\partial E} |g(x)|\,ds(x)=\int_{\partial E} |g(x)|\nabla u_0(x)\cdot n(x)\, ds(x)
\]
For some $r\in (0, R)$ let $\alpha_r:\R\to\R$ be a function such that 
\[
\alpha_r'(t)=\begin{cases}
1 & \textrm{if}\; t\geq 0,\\
1+\frac{t}{r}& \textrm{if} \;t\in (-r,0),\\
0 &\textrm{if} \; t\leq -r.\\
\end{cases}
\]
We then have 
\[
\int_{\partial E} |g(x)|\nabla u_0(x)\cdot n(x)\, ds(x)=\int_{\partial E} |g(x)|\nabla \Big(\alpha_r\big(u_0(x)\big)\Big)\cdot n(x)\, ds(x)=
\]
\[
\int_{ E} \nabla \cdot \Big(|g(x)|\nabla \Big(\alpha_r\big(u_0(x)\big)\Big)\Big)\, dx
\]
where the last equality follows from Stokes Theorem. 
Expanding out the derivatives and noting that $\alpha_r'(t)\in [0,1]$, $\alpha''_r(t)\in [0,\frac{1}{r}]$ and $\alpha'(u_0(x)), \alpha''(u_0(x))$ both vanish for $x$ outside of $E_r^0$, we get
\[
\int_{\partial E} |g(x)|ds(x)\leq  \int_{E_r^0} |\nabla g(x)|\, dx+\int_{E_r^0} |g(x)|\big((\Delta u_0(x))_++\frac{1}{r}\big)\, dx\leq 
\]
\[
\int_{E_r^0} |\nabla g(x)|\, dx+C_0(E,r)\int_{E_r^0} |g(x)|\, dx
\]
Our choice of $r$ was arbitrary, thus we can take an inf over $r\in (0,R)$ to conclude the result when $i=0$.

To tackle the case $i=1$, we will employ a nearly identical argument, except we will use Stokes Theorem to convert the boundary integral into an integral over $\R^d\setminus E$.  Since $\nabla u_1(x)\cdot n(x)=-1$ for $x\in\partial E$, we have
\[
\int_{\partial E} |g(x)|\, ds(x)=-\int_{\partial E} |g(x)|\nabla \Big(\alpha_r\big(u_1(x)\big)\Big)\cdot n(x)\, ds(x)=
\]
\[
\int_{\R^d\setminus E} \nabla \cdot \Big(|g(x)|\nabla \Big(\alpha_r\big(u_1(x)\big)\Big)\Big)\, dx
\]
Now an identical argument to the one above gives the bound for the case $i=1$.
\end{proof}
\begin{corollary}\label{cor:trace}
Suppose that $E\subset \Omega$ is a set with $C^2$ boundary and let $R:=\min(\textrm{Reach}(\partial E),  \textrm{dist}(E,\partial\Omega))$. Define $u_i$, $E_r^i$, and $C_i(E,r)$ as in Lemma~\ref{lem:trace_dist}, and let
\[
C(E,\Omega)=\min_{i\in \{0,1\}} \inf_{0<r<R}C_i(E,r).
\]
If $h:\Omega\to\R$ is an $H^1$ function,
then
\[
\int_{\partial E} |h(x)|^2\, dx\leq \frac{1}{C}\int_{\Omega} |\nabla h(x)|^2+2C\int_{\Omega} |h(x)|^2\, dx,
\]
where 
\[
C=\max(1, C(E,\Omega)).
\]

\end{corollary}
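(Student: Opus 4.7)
The plan is to derive Corollary~\ref{cor:trace} as a direct consequence of Lemma~\ref{lem:trace_dist} applied to the auxiliary function $g = h^2$, combined with Young's inequality. The corollary merely replaces the $L^1$-type bound in Lemma~\ref{lem:trace_dist} by an $L^2$-type bound suited to the Sobolev setting.

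First I would verify that Lemma~\ref{lem:trace_dist} can be applied with $g = h^2$. Strictly speaking the lemma is stated for smooth $g$, so I would invoke a standard density argument: approximate $h \in H^1(\Omega)$ by smooth functions $h_n$ in $H^1$, apply the lemma to $g_n = h_n^2$ (noting $|\nabla g_n| = 2|h_n||\nabla h_n|$), and pass to the limit using the continuity of the trace operator $H^1(\Omega) \to L^2(\partial E)$. This gives, for any $r \in (0, R)$ and $i \in \{0,1\}$,
\[
\int_{\partial E} h(x)^2\, ds(x) \le \int_{E_r^i} 2|h(x)||\nabla h(x)|\, dx + C_i(E,r)\int_{E_r^i} h(x)^2\, dx.
\]
The choice $R = \min(\mathrm{Reach}(\partial E), \mathrm{dist}(E,\partial\Omega))$ ensures $E_r^i \subset \Omega$ for both $i=0$ and $i=1$, so I can enlarge both integration domains to $\Omega$.

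Next I would apply Young's inequality $2ab \le \tfrac{1}{C} a^2 + C b^2$ with $a = |\nabla h|$, $b = |h|$, and $C := \max(1, C(E,\Omega)) > 0$, yielding
\[
2|h(x)||\nabla h(x)| \le \frac{1}{C}|\nabla h(x)|^2 + C\, h(x)^2.
\]
Substituting into the previous display and replacing $E_r^i$ by $\Omega$ gives
\[
\int_{\partial E} h^2\, ds \le \frac{1}{C}\int_{\Omega} |\nabla h|^2\, dx + \bigl(C + C_i(E,r)\bigr)\int_{\Omega} h^2\, dx.
\]

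Finally I would take the infimum over $r \in (0, R)$ and the minimum over $i \in \{0,1\}$ on the right-hand side. The first term is independent of $r$ and $i$, while the second inherits the factor $C + C(E,\Omega)$. Since by construction $C \ge C(E,\Omega)$, we have $C + C(E,\Omega) \le 2C$, and the stated bound follows. I do not expect a serious obstacle here: the only subtle point is the approximation argument legitimizing the use of $g = h^2$ in Lemma~\ref{lem:trace_dist}, which is entirely routine given the $C^2$ regularity of $\partial E$ and the standard density of $C^\infty(\overline\Omega)$ in $H^1(\Omega)$.
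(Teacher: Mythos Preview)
Your proposal is correct and follows essentially the same approach as the paper: apply Lemma~\ref{lem:trace_dist} with $g=h^2$, enlarge the integration domain from $E_r^i$ to $\Omega$, use Young's inequality $2|h||\nabla h|\le \tfrac{1}{C}|\nabla h|^2+C|h|^2$, and handle general $H^1$ functions by density and continuity of the trace. The only cosmetic differences are that the paper first takes the infimum over $r$ and the minimum over $i$ and then applies Young's inequality (you reverse this order), and the paper postpones the density argument to the end; neither affects the substance of the proof.
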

\begin{proof}
Suppose first that $h:\Omega\to\R$ is a smooth function.  By Lemma~\ref{lem:trace_dist}, we have 
\[
\int_{\partial E} |h(x)|^2\, ds(x)\leq \inf_{0<r<R}
\Big(\int_{E_r^{i}} 2|h(x)\nabla h(x) |\, dx+C_i(E,r)\int_{E_r^{i}} |h(x)|^2\, dx\Big)
\]
for $i=0,1$.  
Clearly this is bounded from above by 
\[
 \int_{\Omega} 2|h(x)\nabla h(x) |\, dx+\inf_{0<r<R}C_i(E,r)\int_{\Omega} |h(x)|^2\, dx
\]
Taking a minimum over $i=0,1$, we can conclude that 
\[
\int_{\partial E} |h(x)|^2\, ds(x)\leq \int_{\Omega} 2|h(x)\nabla h(x) |\, dx+C\int_{\Omega} |h(x)|^2\, dx.
\]
We can then use Cauchy-Schwarz to get 
\[
\int_{\partial E} |h(x)|^2\, ds(x)\leq \frac{1}{C}\int_{\Omega} |\nabla h(x) |\, dx+2C\int_{\Omega} |h(x)|^2\, dx.
\]
The result extends to $H^1$ functions thanks to the continuity of the trace operator over $H^1$.
\end{proof}

\begin{proof}[Proof of Theorem~\ref{thm:I-smoothness}]
\leavevmode

Recall that $I(\psi)=\int_\Om \psi(x)\,\mu(x)dx - U^*(\psi^{\bar c})$.

\textit{Step 1: formula for the Hessian of $I$.} The derivation of the Hessian of $I$ is similar to the one of $J$ (see for instance the proof of Lemma~\ref{lemma:hessian-F}). Using the formulas for the first variation of the $c$-transform in Proposition~\ref{prop:c_transform_variation} we can check that
\[
\delta I(\psi)h = -\delta U^*(\psi^{\bar c}) (h\circ S_\psi),
\]
for any test function $h$. To obtain the Hessian of $I$, we need to differentiate $S_\psi$. As in the proof of Lemma~\ref{lemma:hessian-F} we can show that $S_{\psi+h}(y)-S_\psi(y)=\tau DS_\psi(y)^T \nabla h( S_\psi(y))+o(h)$. This implies
\begin{multline*}
    \delta^2 I(\psi)(h,h) = -\delta^2U^*(\psi^{\bar c})(h\circ S_\psi,h\circ S_\psi) - \\ \tau \int_\Om \eta(y) \,\nabla h(S_\psi(y))\cdot DS_\psi(y)\nabla h(S_\psi(y))\,dy,
\end{multline*}
where we set $\eta=\delta U^*(\psi^{\bar c})$. Thus as for $J$, the Hessian of $I$ contains two terms which we can bound separately, $\delta^2I(\psi)(h,h) = -(A)-(B)$. 

\textit{Step 2: Bound on $(B)$}. Do the change of variables $x=S_\psi(y)$, i.e. $y=T_{\psi^{\bar c}}(x)$ in $(B)$. We obtain 
\[
(B) = \tau \int_\Om\eta(T_{\psi^{\bar c}}(x)) \nabla h(x)\cdot \cof DT_{\psi^{\bar c}}(x)\nabla h(x)\,dx,
\]
which can be bounded above by $\tau\normlinf{\eta} \Lambda^{d-1} \normltwo{\nabla h}$ in the same spirit as in the proof of Lemma~\ref{lemma:hessian-F}. Moreover $\normlinf{\eta} \le \rhomax$. Indeed, assuming $V(x)\ge 0$ we have for all $x\in\Om$
\[
\eta(x) = \delta U^*(\psi^{\bar c})(x) = (u_m^*)'(\psi^{\bar c}(x)-V(x)) \le  (u_m^*)'(\psi^{\bar c}(x)) \le \rhomax,
\]
by monotonicity of $(u_m^*)'$ and by definition of $\rhomax$.  As a consequence
\[
(B) \le \tau \rhomax \Lambda^{d-1} \normltwo{\nabla h}^2.
\]

\textit{Step 3: Bound on $(A)$}. 
We have
\[
(A) = \delta U^*(\psi^{\bar c})(h\circ S_\psi,h\circ S_\psi) = \int_\Om (u_m^*)''(\psi^{\bar c}(y)-V(y))\abs{h(S_\psi)}^2\,dy.
\]
Do again the change of variables $y=T_{\psi^{\bar c}}(x)$ to obtain
\[
(A) = \int_\Om (u_m^*)''(p(x)) \abs{h(x)}^2 \det(DT_{\psi^{\bar c}}(x))\,dx,
\]
where we recall that $p(x) = \psi^{\bar c}(T_{\psi^{\bar c}}(x)) - V(T_{\psi^{\bar c}}(x))$. We bound the determinant term by $\Lambda^d$. Then, to go further we must distinguish between the three cases $1\le m\le 2$, $2<m<\infty$ and $m=\infty$. 

When $1\le m\le 2$, the function $(u^*_m)''$ is increasing and therefore
\[
(u^*_m)''(p(x)) \le (u^*_m)''(M) = u_m''(\rhomax)^{-1},
\]
where $M=\sup_x\delta U(\mu)(x)$ (see the maximum principle and the related discussion when $\rhomax$ is defined in equation~\eqref{eq:def-rhomax}). To sum up,
\[
(A) \le u''(\rhomax)^{-1} \Lambda^d \normltwo{h}^2.
\]

When $2<m\le \infty$, one can follow the same line of proof as in the case of $J$, using now the function $p(x)$ instead of $\phi(x)-V(x)$ which modifies the related constants accordingly.

\end{proof}

\bibliographystyle{alpha}  
\bibliography{gf} 
\end{document}